\documentclass[preprint,11pt]{elsarticle}



\usepackage{geometry, hyperref, enumerate, amsthm, amsmath, amssymb, amscd, color, mathrsfs,accents}
\usepackage[all]{xy}

\makeatletter
\def\ps@pprintTitle{%
     \let\@oddhead\@empty
     \let\@evenhead\@empty
     \def\@oddfoot{\reset@font {\footnotesize\itshape to appear in J. Algebra}\hfil\thepage\hfil%
     \llap{\footnotesize\itshape\today}}
     \let\@evenfoot\@oddfoot}
\makeatother 

\numberwithin{equation}{section}

\newtheorem{Prop}[equation]{Proposition}
\newtheorem{Thm}[equation]{Theorem}
\newtheorem{Lemma}[equation]{Lemma}
\newtheorem{Cor}[equation]{Corollary}
\theoremstyle{definition}
\newtheorem{Def}[equation]{Definition}
\newtheorem{Ex}[equation]{Example}
\newtheorem{Rem}[equation]{Remark}

\newcommand{\Int}{{\rm Int}}

\newcommand{\N}{\mathbb{N}}
\newcommand{\Q}{\mathbb{Q}}
\newcommand{\Z}{\mathbb{Z}}
\newcommand{\R}{\mathbb{R}}

\newcommand{\mcW}{\mathcal{W}}

\newcommand{\Br}{{\rm Br}}


\journal{Journal of Algebra}

\begin{document}

\begin{frontmatter}

\title{Pr\"ufer intersection of valuation domains of a field of rational functions}

\author{Giulio Peruginelli} 
\ead{gperugin@math.unipd.it}
\address{Dipartimento di Matematica "Tullio Levi-Civita",\\ Universit\`a di Padova, Via Trieste 63, 35121 Padova, Italy}



\begin{abstract}
\noindent Let $V$ be a rank one valuation domain with quotient field $K$. We characterize the subsets $S$ of $V$ for which the ring of integer-valued polynomials $\Int(S,V)=\{f\in K[X] \mid f(S)\subseteq V\}$ is a Pr\"ufer domain. The characterization is obtained by means of the notion of pseudo-monotone sequence and pseudo-limit in the sense of Chabert, which generalize the classical notions of pseudo-convergent sequence and pseudo-limit by Ostrowski and Kaplansky, respectively. We show that $\Int(S,V)$ is Pr\"ufer if and only if no element of the algebraic closure $\overline{K}$ of $K$ is a pseudo-limit of a pseudo-monotone sequence contained in $S$, with respect to some extension of $V$ to $\overline{K}$. This result expands a recent result by Loper and Werner.
\end{abstract}

\begin{keyword}
Pr\"ufer domain \sep pseudo-convergent sequence \sep pseudo-limit \sep residually transcendental extension \sep integer-valued polynomial

\MSC Primary 13F05 Secondary  13F20 \sep 13A18

\end{keyword}

\end{frontmatter}

\section{Introduction}

An integral domain $D$ is Pr\"ufer if $D_M$ is a valuation domain for each maximal ideal $M$ of $D$. A Pr\"ufer domain $D$ enjoys an abundance of properties  (see for example \cite{Gilmer}), among which there is the fact that $D$ is integrally closed. By a celebrated result of Krull, every integrally closed domain with quotient field $K$ can be  represented as an intersection of valuation domains of $K$. Conversely, it is of extreme importance  to establish when a given family of valuation domains of a  given field $K$ intersects in a Pr\"ufer domain with quotient field $K$. This problem has also connections to real algebraic geometry, since the real holomorphy rings of a formally real function field is well-known to be a Pr\"ufer domain (see for example \cite[\S 2.1]{FHP}). Different authors have investigated this problem: for example, Gilmer  and Roquette gave explicit construction of Pr\"ufer domains constructed as intersection of valuation domains, or, which is the same thing, as the integral closure of some subring (see \cite{GilmPrufer} and \cite{Roq}, respectively). Recently, Olberding gave a geometric criterion on a subset $Z$ of the Zariski-Riemann space of all the valuation domains of a field in order for the holomorphy ring $\bigcap_{V\in Z}V$ to be a Pr\"ufer domain; this criterion is given in terms of projective morphisms of $Z$, considered as a locally ringed space, into the projective line (see \cite{Olb1}). In \cite{Olb2} Olberding gave a sufficient condition on a family of rank one valuation domains which satisfies certain assumptions so that the intersection of the elements of the family is a Pr\"ufer domain. 

In this paper we focus our attention to the relevant class of polynomial rings called integer-valued polynomials. Classically, given an integral domain $D$ with quotient field $K$ and a subset $S$ of $D$, the ring of integer-valued polynomials over $S$ is defined as:
$$\Int(S,D)=\{f\in K[X] \mid f(S)\subseteq D\}.$$
For $S=D$, we set $\Int(D,D)=\Int(D)$. We refer to \cite{CaCh} for a detailed treatment of this kind of rings. If $D$ is Noetherian, Chabert and McQuillan independently gave sufficient and necessary conditions on $D$ so that $\Int(D)$ is Pr\"ufer (see \cite[Theorem VI.1.7]{CaCh}). Later on, Loper generalized their result to a general domain $D$ (see \cite{LopClass}). The problem of establishing when $\Int(S,D)$ is a Pr\"ufer domain for a general subset $S$ of $D$ is considerably more difficult, see \cite{LS} for a recent survey on this problem. Since a necessary condition for $\Int(S,D)$ to be Pr\"ufer is that $D$ is Pr\"ufer (see for example \cite{LS}), it is reasonable to work locally. Henceforth, we consider  $D$ to be equal to a valuation domain $V$. 

The ring $\Int(S,V)$ can be represented in the following way as an intersection of a family of valuation domains of the field of rational functions $K(X)$ and the polynomial ring $K[X]$ (which likewise can be represented as an intersection of valuation domains lying over the trivial valuation domain $K$):
$$\Int(S,V)=K[X]\cap \bigcap_{s\in S}W_s$$
where, for each $s\in S$, $W_s$ is the valuation domain of those rational functions which are integer-valued at $s$, i.e.: $W_s=\{\varphi\in K(X) \mid \varphi(s)\in V\}$. In the language of Roquette \cite{Roq}, a rational function $\varphi\in K(X)$ is holomorphic at $W_s$ (or, equivalently, $\varphi$ has no pole at $W_s$) if and only if $\varphi$ is integer-valued at $s$.  Clearly, $W_s$ lies over $V$, and, in the case $V$ has rank one, $W_s$ has rank two. The topology on the subspace of the Riemann-Zariski space of $K(X)$ formed by the valuation domains $W_s$, $s\in S$, has been extensively studied in \cite{PerTransc}, when $V$ has rank one: in particular, $\{W_s \mid s\in S\}$ as a subspace of the  Zariski-Riemann space of all the valuation domains of $K(X)$ is homeomorphic to $S$, considered as a subset of $V$, endowed with the $V$-adic topology. 

For a general valuation domain $V$, we have the following well-known result (which is now a special case of the aforementioned result of Loper in \cite{LopClass}):

\begin{Thm}\cite[Lemma VI.1.4, Proposition VI.1.5]{CaCh}\label{IntVPrufer}
Let $V$ be a valuation domain. Then $\Int(V)$ is a Pr\"ufer domain if and only if $V$ is a DVR with finite residue field.
\end{Thm}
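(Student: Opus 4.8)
We may assume that $V$ is not a field, and we write $\mathfrak{m}$ for its maximal ideal.

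The plan for the necessity is to prove the contrapositive in three steps. First, if $V/\mathfrak{m}$ is infinite, then $\Int(V)=V[X]$: given $f\in K[X]$ of degree $n$ with $f(V)\subseteq V$, pick nodes $a_0,\dots,a_n\in V$ pairwise incongruent modulo $\mathfrak{m}$ and read off from Lagrange interpolation, $f(X)=\sum_i f(a_i)\prod_{j\neq i}\frac{X-a_j}{a_i-a_j}$, that $f\in V[X]$, each $a_i-a_j$ ($i\neq j$) being a unit of $V$. But $V[X]$ is not Pr\"ufer when $V$ is not a field: for $0\neq\pi\in\mathfrak{m}$ one checks that $(\pi,X)^{-1}=V[X]$, so the finitely generated ideal $(\pi,X)$ is not invertible. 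Second, if $V/\mathfrak{m}$ is finite but $\mathfrak{m}$ is not principal, then $\inf\{v(x):0\neq x\in\mathfrak{m}\}=0$, from which one deduces that $\inf_{x\in V}v(g(x))=0$ for every $g\in V[X]$ having a unit coefficient; since any $f\in K[X]\setminus V[X]$ with $f(V)\subseteq V$ would, after clearing denominators, produce such a $g$ with $\inf_{x\in V}v(g(x))>0$, this again forces $\Int(V)=V[X]$, not Pr\"ufer. Third, if $\mathfrak{m}$ is principal, $V/\mathfrak{m}$ finite, but $V$ has rank $\geq 2$, one reduces to a rank-one valuation domain obtained from $V$ (by localizing at a nonzero prime $\mathfrak{p}\subsetneq\mathfrak{m}$, or passing to a residue domain) whose residue field is infinite; its ring of integer-valued polynomials is then a non-Pr\"ufer polynomial ring, and this contradicts $\Int(V)$ being Pr\"ufer --- this reduction is \cite[Lemma VI.1.4]{CaCh}. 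Hence $V$ is a rank-one discrete valuation domain with finite residue field.

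For the sufficiency, assume $V$ is a DVR with uniformizer $t$ and residue field of cardinality $q<\infty$; I would check that $\Int(V)_{\mathfrak{M}}$ is a valuation domain for every maximal ideal $\mathfrak{M}$, according to whether $\mathfrak{M}\cap V$ equals $(0)$ or $\mathfrak{m}$ (the only primes of $V$). If $\mathfrak{M}\cap V=(0)$: every $g\in K[X]$ is bounded on $V$, so $t^ng\in\Int(V)$ for $n\gg 0$, hence $(V\setminus\{0\})^{-1}\Int(V)=K[X]$ and $\Int(V)_{\mathfrak{M}}$ is a localization of the principal ideal domain $K[X]$, a DVR. If $\mathfrak{M}\cap V=\mathfrak{m}$: pass to the completion $\V$; every $f\in\Int(V)$ extends continuously to $\V$ and $\Int(V)\subseteq\Int(\V)$, and these maximal ideals are $\mathfrak{M}_\alpha=\{f\in\Int(V)\mid f(\alpha)\in\mathfrak{m}\V\}$ for $\alpha\in\V$. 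The key step is to show that $\Int(V)_{\mathfrak{M}_\alpha}$ is the valuation ring attached to the recipe $\varphi\mapsto v(\varphi(\alpha))$ --- of rank one when $\alpha$ is transcendental over $K$, of rank two when $\alpha$ is algebraic --- equivalently, that for $\varphi=f/g\in K(X)$ with $g\notin\mathfrak{M}_\alpha$ one of $\varphi,\varphi^{-1}$ lies in $\Int(V)_{\mathfrak{M}_\alpha}$. Here the finiteness of $q$ is essential: one uses the regular (P\'olya) basis $\{f_n\}$ of $\Int(V)$ attached to a $v$-ordering of $V$, whose denominators $d_n$ satisfy $v(d_n)=\sum_{i\geq 1}\lfloor n/q^i\rfloor$, so that $\Int(V)$ is large enough to resolve $\varphi$ at $\mathfrak{M}_\alpha$. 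I would carry this out as in \cite[Proposition VI.1.5]{CaCh}.

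The interpolation arguments are routine; the main obstacle is the sufficiency direction --- proving that each localization $\Int(V)_{\mathfrak{M}_\alpha}$ is genuinely a valuation domain --- together with, on the necessity side, the reduction of the higher-rank case to a rank-one valuation domain with infinite residue field. Both rest on a precise analysis of the maximal ideals of $\Int(V)$ lying over $\mathfrak{m}$ in terms of the completion $\V$.
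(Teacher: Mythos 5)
The paper does not prove this theorem---it is quoted verbatim from \cite[Lemma VI.1.4, Proposition VI.1.5]{CaCh}---and your plan reproduces exactly the Cahen--Chabert route: necessity by a case split on $V/\mathfrak{m}$ (Lagrange interpolation when the residue field is infinite, the non-discrete case, and the rank-$\geq 2$ reduction via a nonzero prime $\mathfrak{p}\subsetneq\mathfrak{m}$, where the contradiction is that $V_{\mathfrak{p}}[X]$, being an overring of $\Int(V)$ in $K(X)$ with $V_{\mathfrak{p}}$ of infinite residue field, cannot be Pr\"ufer), and sufficiency by localizing $\Int(V)$ at each maximal ideal and identifying the primes above $\mathfrak{m}$ with $\mathfrak{M}_\alpha$, $\alpha\in\V$, via the completion and the P\'olya regular basis. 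Your outline is sound; the one place where you should insist on more detail if writing this out is the middle step, since ``$\inf_{x\in V}v(g(x))=0$ for $g\in V[X]$ of unit content'' is a genuine lemma requiring a Taylor expansion of $g$ about a lift $a\in V$ of a residue class at which $\overline g$ vanishes to minimal order, and one should also verify that this step (or the rank-$\geq 2$ reduction) actually covers the case of $\mathfrak{m}$ non-principal in rank $\geq 2$, where $\inf$ must be read in the value group rather than in $\mathbb R$.
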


The first result about when $\Int(S,V)$ is Pr\"ufer dates back to McQuillan: he showed that if $S$ is a finite set then $\Int(S,V)$ is Pr\"ufer  (more generally, he showed that for a finite subset $S$ of an integral domain $D$, $\Int(S,D)$ is Pr\"ufer if and only if $D$ is Pr\"ufer, see \cite{McQ}). Later on, Cahen, Chabert and Loper turned their attention to infinite subsets $S$ of a valuation domain $V$, and gave the following sufficient condition (here, precompact means that the topological closure of $S$ in the completion of $V$ is compact).

\begin{Thm}\cite[Theorem 4.1]{CCL}\label{ThmCCL}
Let $V$ be a valuation domain and $S$ a subset of $V$. If $S$ is a precompact subset of $V$ then $\Int(S,V)$ is a Pr\"ufer domain.
\end{Thm}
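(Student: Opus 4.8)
The plan is to realize $\Int(S,V)$ as an intersection of the rings $\Int(T,V)$ with $T\subseteq S$ finite --- each of which is a Pr\"ufer domain by McQuillan's theorem, since a valuation domain is Pr\"ufer --- and to use precompactness to control the passage from these finite approximations to the whole of $S$. Write $v$ for a valuation attached to $V$.

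First I would reduce to the case that $S$ is closed in $V$ for the $V$-adic topology: every element of $K[X]$ is $V$-adically continuous and $V$ is closed in itself, so $\Int(S,V)=\Int(\overline{S},V)$, where $\overline{S}$ denotes the closure of $S$ in $V$. Precompactness means that $S$ is totally bounded, i.e.\ for each nonzero ideal $I$ of $V$ the set $S$ lies in a finite union of cosets of $I$; running over a cofinal family of such ideals one obtains finite subsets $T_I\subseteq S$ with $S\subseteq\bigcup_{t\in T_I}(t+I)$, and (as $I$ shrinks, the representative $t\in T_I$ of a given $s\in S$ tends to $s$) with $\bigcup_I T_I$ dense in $S$. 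Using the ultrametric inequality together with the continuity of a rational function away from its polar locus, I would then prove two facts. \emph{(i)} $\Int(S,V)=\bigcap_I\Int(T_I,V)$: the inclusion $\subseteq$ is clear, and for the reverse one observes that a $\varphi\in K(X)$ which is integer-valued on every $T_I$ can have no pole on $S$ --- otherwise $v(\varphi(t_I))\to-\infty$ along a net $t_I\in T_I$ converging to the pole, contradicting $\varphi(t_I)\in V$ --- hence $\varphi$ is continuous on $S$ and therefore integer-valued on $S$ by density of $\bigcup_I T_I$ and closedness of $V$. \emph{(ii)} The intersection in \emph{(i)} has \emph{finite character}: any $\varphi\in K(X)\setminus\Int(S,V)$ already fails to lie in $\Int(T_I,V)$ for all sufficiently small $I$, because either $\varphi$ has a pole at some $s\in S$ (along which its values blow up), or $\varphi$ is continuous and nonzero at a point $s_0\in S$ where it is not integer-valued, and then the ultrametric forces $v(\varphi(t))=v(\varphi(s_0))<0$ for every $t\in T_I$ close enough to $s_0$.

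It remains to deduce that $R:=\Int(S,V)=\bigcap_I\Int(T_I,V)$ is Pr\"ufer. When checking invertibility of a fixed finitely generated ideal one may replace the $T_I$ by increasing unions of finitely many of them along a cofinal chain of ideals, so that the family $\{\Int(T_I,V)\}$ becomes totally ordered by inclusion, consists of Pr\"ufer domains with common quotient field $K(X)$, and has $R$ as a finite-character intersection; one then checks, say, that every two-generated ideal $(f,g)$ of $R$ is invertible by noting that it is invertible in each $\Int(T_I,V)$ and using property \emph{(ii)} to patch the resulting data into a witness of invertibility over $R$. I expect this descent to be the main obstacle, because an arbitrary intersection of Pr\"ufer domains need not be Pr\"ufer: the finite-character behaviour of \emph{(ii)} --- which is exactly where precompactness and the ultrametric enter essentially --- is what forces it through, and one must verify that reducing to a nested family does not destroy it. An alternative route for this last step is to pass to the completion $\V$ of $V$, observe that $\Int(S,V)$ is Pr\"ufer if and only if $\Int(\overline{S},\V)$ is --- now with $\overline{S}$ the \emph{compact} closure of $S$ in $\V$ --- and then verify directly that every localization of $\Int(\overline{S},\V)$ at a maximal ideal is a valuation domain: over the zero ideal of $\V$ such a localization is an overring of the principal ideal domain $\K[X]$, hence a valuation domain, whereas over the maximal ideal of $\V$ compactness concentrates the maximal ideal at a single point $\alpha\in\overline{S}$, so that the localization is contained in (a localization of) the valuation ring $W_\alpha=\{\varphi\in\K(X)\mid\varphi(\alpha)\in\V\}$ and is therefore itself a valuation domain.
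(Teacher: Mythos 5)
The paper does not prove this theorem---it is cited from [CCL]---so I compare against the known argument there, which does go through the completion. Your instinct that the final descent is the crux is correct, but as written both of your routes have gaps that are not cosmetic.

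Route 1 cannot work on the information you feed it. Properties~(i) and~(ii) alone are satisfied by subsets $S$ for which $\Int(S,V)$ is \emph{not} Pr\"ufer: take $V$ of rank one non-discrete and $E=\{s_n\}_{n\in\N}$ a pseudo-convergent sequence of algebraic type with non-zero breadth ideal. By Theorem~\ref{ThmLW}, $\Int(E,V)$ is not Pr\"ufer; yet with $T_n=\{s_1,\dots,s_n\}$ one has $\Int(E,V)=\bigcap_n\Int(T_n,V)$, each $\Int(T_n,V)$ is Pr\"ufer by McQuillan, the family is nested, and any $\varphi\notin\Int(E,V)$ satisfies $\varphi(s_n)\notin V$ for some $n$ and hence lies outside $\Int(T_m,V)$ for all $m\ge n$. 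So your ``finite character'' property holds for a family whose intersection fails to be Pr\"ufer. Whatever ``patching of witnesses of invertibility'' would finish the argument must use precompactness somewhere else; (i) and (ii) are not where it bites. (For a nested family, (ii) is also just the contrapositive of (i), so it carries no extra content.)

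Route 2 is closer in spirit to the CCL proof, but two steps are asserted where they need to be proved. First, ``$\Int(S,V)$ is Pr\"ufer if and only if $\Int(\overline{S},\V)$ is'' is nontrivial: $\K(X)/K(X)$ is transcendental, so $\Int(\overline{S},\V)$ is not an integral closure of $\Int(S,V)$, and the Pr\"ufer property does not descend along arbitrary contractions to subfields of the quotient field; you need a genuine argument (flatness/descent, or a direct analysis of valuation overrings as in Theorem~\ref{criterionZS}). Second, the conclusion ``the localization is contained in (a localization of) the valuation ring $W_\alpha$ and is therefore itself a valuation domain'' is a non sequitur: being a subring of a valuation domain does not make a ring a valuation domain. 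What one must show is that the localization \emph{equals} $W_\alpha$ (the inclusion $\subseteq$ that you state is the easy half), and compactness of $\overline{S}$ in $\V$ is used earlier, to show that every maximal ideal of $\Int(\overline{S},\V)$ lying over the maximal ideal of $\V$ is of the form $\mathfrak{m}_\alpha=\{f\mid v(f(\alpha))>0\}$ for some $\alpha\in\overline{S}$---without compactness there can be ``extra'' maximal ideals not attached to any point, which is precisely what goes wrong for the pseudo-convergent example above. As it stands, neither route is a proof.
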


Whether the precompact  condition on $S$ is also a necessary condition or not was a natural question posed in \cite{CCL}. If $V$ is a rank one discrete valuation domain, then it is sufficient and necessary that $S$ is precompact in order for $\Int(S,V)$ to be Pr\"ufer (\cite[Corollary 4.3]{CCL}). Similarly, Park proved recently that if $S$ is an additive subgroup of any valuation domain $V$, then $\Int(S,V)$ is a Pr\"ufer domain if and only if $S$ is precompact (\cite[Theorem 2.7]{MHP}). Unfortunately, already for a non-discrete rank one valuation domain $V$ the precompact condition turned out to be not necessary, as Loper and Werner showed by considering subsets $S$ of $V$ whose elements comprise a pseudo-convergent sequence in the sense of Ostrowski (for all the definitions related to this notion see \S \ref{Gps} below). It is worth recalling that  the first time this notion has been used in the realm of integer-valued polynomials is in two articles of Chabert (see \cite{ChabPolCloVal,ChabIntValValField}). Loper and Werner made a thorough study of the   rings of polynomials which are integer-valued over a pseudo-convergent sequence $E=\{s_n\}_{n\in\N}$ of a rank one valuation domain $V$, obtaining the following characterization of when $\Int(E,V)$ is Pr\"ufer.

\begin{Thm}\cite[Theorem 5.2]{LW}\label{ThmLW}
Let $V$ be a rank one valuation domain and $E=\{s_n\}_{n\in\N}$ a pseudo-convergent sequence in $V$. Then $\Int(E,V)$ is a Pr\"ufer domain if and only if either $E$ is of transcendental type or the breadth ideal of $E$ is the zero ideal.
\end{Thm}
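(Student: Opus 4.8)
The plan is to decide the Pr\"ufer property of $\Int(E,V)$ by analysing its valuation overrings through the representation $\Int(E,V)=K[X]\cap\bigcap_{n\in\N}W_{s_n}$, using the classical theory of pseudo-convergent sequences (Ostrowski, Kaplansky) as the dictionary between $E$ and the valuations of $K(X)$. Recall that in some rank-one extension $(L,w)$ of $(K,v)$ the sequence $E$ admits a pseudo-limit $\alpha$, i.e. $w(\alpha-s_n)=\gamma_n:=v(s_{n+1}-s_n)$ for all $n$; that $E$ is of transcendental type precisely when every such pseudo-limit (over every extension) is transcendental over $K$; and that when $E$ is of algebraic type one may choose $\alpha\in\overline{K}$ to be a root of a monic $g\in K[X]$ of least degree $d$ for which $v(g(s_n))$ is eventually strictly increasing. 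Put $\gamma=\sup_n\gamma_n$. Since $V$ has rank one, the breadth ideal $B$ is zero exactly when $\gamma=\infty$, that is, when $E$ is a Cauchy sequence. I would therefore split the proof into the cases $B=(0)$, $B\ne(0)$ with $E$ of transcendental type, and $B\ne(0)$ with $E$ of algebraic type.

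If $B=(0)$, then $E$ is Cauchy, its closure in the completion $\widehat{V}$ is the compact set $E\cup\{\widehat{s}\}$ with $\widehat{s}=\lim_n s_n$, so $E$ is precompact in $V$ and $\Int(E,V)$ is Pr\"ufer by Theorem \ref{ThmCCL}. This also disposes of every pseudo-convergent sequence when $V$ is a DVR, since there $\gamma$ is always $\infty$, recovering the discrete case of \cite{CCL}.

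If $B\ne(0)$ and $E$ is of transcendental type, the governing fact is that for every nonzero $f\in K[X]$ the value $v(f(s_n))$ is \emph{eventually constant} (otherwise $f$ of degree $\ge 1$ would witness algebraic type); denote this eventual value by $w_\gamma(f)$. One checks that $w_\gamma$ is a valuation of $K(X)$ lying over $V$ — a residually transcendental one, attached to the transcendental pseudo-limit $\alpha$ — with valuation domain $W_\gamma\supseteq\Int(E,V)$. I would then prove Pr\"uferness by showing that every localisation of $\Int(E,V)$ at a maximal ideal is a valuation domain: a maximal ideal lying over $(0)$ in $V$ gives a localisation of $K[X]$; the maximal ideal attached to $s_n$ gives $W_{s_n}$; and there is a single remaining ``limit'' maximal ideal, which localises to exactly $W_\gamma$. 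The hard part is this last identification — equivalently, ruling out any valuation overring of $\Int(E,V)$ other than $W_\gamma$ centred on the limit prime — and it is here that transcendence of $\alpha$ is decisive: there is no conjugate of $\alpha$ to furnish a competing, incomparable residually transcendental valuation over the same centre, and one must bring in the classification of extensions of $v$ to $K(X)$ together with a description of the maximal spectrum of $\Int(E,V)$. I expect this to be the main obstacle.

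If $B\ne(0)$ and $E$ is of algebraic type, I would prove the contrapositive: $\Int(E,V)$ is not Pr\"ufer. With $g$ and $\alpha\in\overline{K}$ as above, the key difference from the previous case is that $v(g(s_n))$ is \emph{not} eventually constant — it strictly increases to a (possibly infinite) limit $\delta$ — so the polynomial $g$, which lies in $\Int(E,V)$ after a suitable scaling, is a non-unit in the ``limit'' localisation in a way no single valuation can accommodate: when $\delta=\infty$ the element $g$ falls into $\bigcap_k M^k$ for the limit maximal ideal $M$, and when $\delta<\infty$ one obtains two incomparable valuation overrings centred at $M$ (among them the residually transcendental valuation at $\alpha$). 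Concretely I would exhibit a two-generated ideal of $\Int(E,V)$, built from $g$ together with a nonzero element of $B$, that fails to be invertible — equivalently, an overring of $\Int(E,V)$ that is not integrally closed — so that $\Int(E,V)$ is not Pr\"ufer. Combining the three cases gives the stated equivalence.
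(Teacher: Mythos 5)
Your three-case split (zero breadth ideal; nonzero breadth ideal and transcendental type; nonzero breadth ideal and algebraic type) is natural, and the first case via precompactness and Theorem \ref{ThmCCL} is exactly right. But note that this paper does not re-prove Theorem \ref{ThmLW} directly: it is cited from Loper--Werner, and then recovered as a corollary of the more general Theorem \ref{final theorem}, whose proof takes a route that dissolves precisely the two steps you yourself flag as problematic.

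For the transcendental-type case you propose to enumerate the maximal ideals of $\Int(E,V)$ and verify that each localisation is a valuation domain, in particular identifying the ``limit'' maximal ideal's localisation with the valuation ring $W_\gamma$ you build from the eventual values $v(f(s_n))$. This is essentially Loper--Werner's own bottom-up strategy, and, as you concede, it is the main obstacle; your write-up does not carry it out. The paper sidesteps it entirely. By the Zariski--Samuel criterion (Theorem \ref{criterionZS}, packaged here as Theorem \ref{criterion 1 general case}), an integrally closed ring $R\subseteq K[X]$ with $R\cap K=V$ Pr\"ufer fails to be Pr\"ufer \emph{exactly} when it lies inside some residually transcendental extension of $V$; by Alexandru--Popescu (Theorem \ref{characterization residually transcendental exts}) every such extension is $V_{\alpha,\gamma}^{\overline{W}}$ with $\alpha\in\overline{K}$, $\gamma\in\Gamma_{\overline{v}}$; and the dictionary between those containments and pseudo-limits (Proposition \ref{generalized pseudo sequence and polynomial closure} together with Theorem \ref{existence of generalized pseudo-sequence} and Proposition \ref{reduction to K}) reduces the question to whether $E$ has a pseudo-limit in $\overline{K}$. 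Since transcendental type means precisely that no $\alpha\in\overline{K}$ is a pseudo-limit of $E$ (else the minimal polynomial of $\alpha$ would witness algebraic type), the case is closed without any description of the spectrum. That is what the criterion buys: you only need to rule out one distinguished class of overrings, not control all maximal ideals.

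For the algebraic-type case you gesture at a non-invertible two-generated ideal or a non-integrally-closed overring, but never exhibit either; this is a genuine gap. In the paper's framework the case is immediate: algebraic type gives, by Kaplansky, a pseudo-limit $\alpha\in\overline{K}$ with respect to some extension $W$ of $V$, whence $\Int(E,W)\subset W_{\alpha,\gamma}$ by Proposition \ref{generalized pseudo sequence and polynomial closure}, so $\Int(E,V)\subset V_{\alpha,\gamma}^{\overline{W}}=W_{\alpha,\gamma}\cap K(X)$, and $\Int(E,V)$ is not Pr\"ufer by Theorem \ref{criterion 1 general case} (see also Remark \ref{breadth not in Gamma_v} for the case $\gamma\notin\Gamma_{\overline v}$). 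In short, your outline identifies the correct statements to prove, but both nontrivial directions rest on missing arguments, and the Zariski--Samuel plus Alexandru--Popescu machinery the paper sets up is exactly what fills those holes — indeed that is the conceptual improvement this paper offers over the original Loper--Werner argument you were reconstructing.
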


In particular, if $E$ is a pseudo-convergent sequence with non-zero breadth ideal and of transcendental type, then $E$ is not precompact and $\Int(E,V)$ is a Pr\"ufer domain  (\cite[Example 5.12]{LW}). 

In this paper,  we give a sufficient and necessary condition on a general subset $S$ of a rank one valuation domain $V$ so that $\Int(S,V)$ is Pr\"ufer, generalizing the above result by Loper and Werner. Throughout the paper, we assume that $V$ is a rank one valuation domain with maximal ideal $M$ and quotient field $K$. We denote by $v$ the associated valuation and by $\Gamma_v$ the value group. In particular, $\Gamma_v$ is an ordered subgroup of the reals, so that $\Gamma_v\subseteq\R$. Our approach proceeds as follows. We employ a criterion for an integrally closed domain $D$ to be Pr\"ufer (which can be found for example in the book of Zariski and Samuel \cite{ZS2}): it is sufficient and necessary that, for each valuation overring  $W$ of $D$ with center a prime ideal $P$ on $D$, the extension of the residue field of $W$ over the quotient field of $D/P$ is not transcendental. In our setting, a  valuation overring $W$ of $\Int(S,V)$ which does not satisfy the previous property is a residually transcendental extension of $V$ (i.e.: $W$ lies over $V$ and the residue field of $W$ is a transcendental extension of the residue field of $V$). These valuation domains of the field of rational functions have been completely described by Alexandru and Popescu.  Putting together these facts, we show that the lack of the Pr\"ufer property for $\Int(S,V)$ occurs precisely when $S$ contains a pseudo-monotone sequence in the sense of Chabert which admits a pseudo-limit in the algebraic closure of $K$ (with respect to a suitable extension of $V$). These notions generalize  the notions of pseudo-convergent sequence and pseudo-limit in the sense of Ostrowski and Kaplansky, respectively.   

Here is a summary of this paper. In \S \ref{Gps} we introduce the notion of pseudo-monotone sequence and pseudo-limit given by Chabert. In \S \ref{pol clos} we recall a result of Chabert about the fact that the  polynomial closure of a subset $S$ of $V$, defined as the largest subset of $V$ over which all the polynomials of $\Int(S,V)$ are integer-valued, is a topological closure. In \S \ref{Resid transc exten} we recall the aforementioned criterion for an integrally closed domain to be Pr\"ufer and an explicit description by Alexandru and Popescu of residually transcendental extensions of a valuation domain, which are crucial for our discussion. Finally, in \S \ref{final result}, we give our main result which classifies the subsets $S$ of a rank one valuation domain $V$ for which $\Int(S,V)$ is Pr\"ufer (see Theorem \ref{final theorem}). This result is accomplished by describing when an element $\alpha\in K$ is a pseudo-limit of a pseudo-monotone sequence contained in $S$: this happens when a closed ball $B(\alpha,\gamma)=\{x\in K \mid v(x-\alpha)\geq\gamma\}$ is contained in the polynomial closure of $S$. From this point of view, the assumption of Theorem \ref{ThmCCL} is equivalent to the fact that $S$ does not contain any pseudo-monotone sequence, which is a sufficent but not necessary condition for $\Int(S,V)$ to be Pr\"ufer, as the above example of Loper and Werner shows.

\section{Preliminaries}

\subsection{Pseudo-monotone sequences}\label{Gps}

We introduce the following notion, which is given by Chabert in \cite{ChabPolCloVal}. It contains the classical definition of pseudo-convergent sequence of a valuation domain by Ostrowski in \cite{Ostr} and exploited by Kaplansky in \cite{Kap} to describe immediate extensions of a valued field.
\begin{Def}
Let $E=\{s_n\}_{n\in\N}$ be a sequence in $K$. We say that $E$ is a \emph{pseudo-monotone  sequence} (with respect to the valuation $v$) if the sequence $\{v(s_{n+1}-s_n)\}_{n\in\N}$ is monotone, that is, one of the following conditions holds:
\begin{itemize}
\item[i)] $v(s_{n+1}-s_n)<v(s_{n+2}-s_{n+1})$, $\forall n\in\N$.
\item[ii)] $v(s_n-s_m)=\gamma\in\Gamma_v$, for all $n\not=m\in\N$. 
\item[iii)] $v(s_{n+1}-s_n)>v(s_{n+2}-s_{n+1})$, $\forall n\in\N$.
\end{itemize}
More precisely, we say that $E$ is \emph{pseudo-convergent}, \emph{pseudo-stationary} or \emph{pseudo-divergent} in each of the three different cases, respectively. Case i) is precisely the original definition given by Ostrowski in \cite[\S 11, p. 368]{Ostr}. Let $\alpha\in K$. We say that $\alpha$ is a \emph{pseudo-limit} of $E$ in each of the three different cases above if:
\begin{itemize}
\item[i)] $v(\alpha-s_n)<v(\alpha-s_{n+1})$, $\forall n\in\N$, or, equivalently, $v(\alpha-s_n)=v(s_{n+1}-s_n)$, $\forall n\in\N$.
\item[ii)] $v(\alpha-s_n)=\gamma$, for all $n\in\N$.
\item[iii)] $v(\alpha-s_n)>v(\alpha-s_{n+1})$, $\forall n\in\N$, or, equivalently, $v(\alpha-s_{n+1})=v(s_{n+1}-s_n)$,  $\forall n\in\N$.
\end{itemize}
We remark that case i) is the definition of pseudo-limit as given by Kaplansky in \cite{Kap}. Given a subset $S$ of $K$ and an element $\alpha$ in $K$, we say that $\alpha$ is a pseudo-limit of $S$ if $\alpha$ is a pseudo-limit of a pseudo-monotone sequence of elements of $S$.

The following limit in $\R\cup\{\infty\}$ is called the \emph{breadth} of a  pseudo-monotone sequence $E$, as given in \cite{ChabPolCloVal}, which generalizes the definition of Ostrowski for pseudo-convergent sequences (\cite[p. 368]{Ostr}):
$$\delta=\lim_{n\to\infty}v(s_{n+1}-s_n).$$
\end{Def}
Note that since $\{v(s_{n+1}-s_n)\}_{n\in\N}$ is either increasing, decreasing or stationary, the above limit is a well-defined real number and $\delta$ may not be in $\Gamma_v$. In the latter case, $V$ is necessarily not discrete. Note that, if $E=\{s_n\}_{n\in\N}\subset V$ and $\alpha$ is a pseudo-limit of $E$, then it is easy to see that the breadth $\delta$ is greater than or equal to $0$ and $\alpha\in V$. We now give some remarks and further definitions for each of the three cases above.

\subsubsection{Pseudo-convergent sequences}\label{pcv}
\begin{Def}
Let $E=\{s_n\}_{n\in\N}$ be a pseudo-convergent sequence in $V$. The following ideal of $V$:
$$\Br(E)=\{b\in V \mid v(b)>v(s_{n+1}-s_n),\forall n\in\N\}$$
is called the \emph{breadth ideal} of $E$.  

We say that $E$ is of \emph{transcendental type} if $v(f(s_n))$ eventually stabilizes for every $f\in K[X]$. If for some $f\in K[X]$ the sequence $v(f(s_n))$ is eventually strictly increasing then we say that $E$ is of \emph{algebraic type}.
\end{Def}

Clearly, the breadth ideal is the zero ideal if and only if $\delta=+\infty$. If $V$ is a discrete rank one valuation domain (DVR), then the breadth ideal is necessarily equal to the zero ideal. In general, this last condition holds exactly when $E$ is a classical Cauchy sequence and then the definition of pseudo-limit boils down to the classical notion of limit (which in this case is unique). Throughout the paper, to avoid confusion, a pseudo-convergent sequence is supposed to have non-zero breadth ideal, and similary, an element $\alpha\in K$ is a pseudo-limit of a sequence $E$ if $E$ is  a pseudo-convergent sequence in this strict sense.  Moreover, in this case if $\alpha\in K$ is a pseudo-limit for $E$, then $\{\alpha\}+\Br(E)$ is the set of all the pseudo-limits for $E$ (\cite[Lemma 3]{Kap}).

The following easy lemma gives a link between the breadth and the breadth ideal for a pseudo-convergent sequence (the inf is considered in $\R$). 
\begin{Lemma}\label{breadth=inf values breadth ideal}
Let $E=\{s_n\}_{n\in\N}\subset V$ be a pseudo-convergent sequence with non-zero breadth ideal. Let
$$\delta'=\inf\{v(b)\mid b\in\Br(E)\}$$
Then $\delta'=\delta$, the breadth of $E$. Moreover, $\delta\in\Gamma_v\Leftrightarrow \Br(E)$ is a principal ideal.
\end{Lemma}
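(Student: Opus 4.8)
The plan is to reduce the whole statement to the single identity
\[
\Br(E)=\{b\in V \mid v(b)\geq\delta\},
\]
after which everything becomes an elementary remark about the ordered subgroup $\Gamma_v\subseteq\R$. Since $E$ is pseudo-convergent, the sequence $\gamma_n:=v(s_{n+1}-s_n)$ is strictly increasing, so $\delta=\sup_n\gamma_n$ (and $\delta$ is finite, since $\Br(E)\neq(0)$, as recalled before the statement) and $\gamma_n<\delta$ for every $n$. The inclusion $\subseteq$ in the displayed identity is immediate: $v(b)>\gamma_n$ for all $n$ forces $v(b)\geq\sup_n\gamma_n=\delta$. The inclusion $\supseteq$ is equally immediate: if $v(b)\geq\delta$ then $v(b)\geq\delta>\gamma_n$ for all $n$, so $b\in\Br(E)$. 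In particular $\delta'=\inf\{v(b)\mid b\in\Br(E)\}\geq\delta$.

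For the reverse inequality $\delta'\leq\delta$ and for the last assertion I would split into two cases. If $\delta\in\Gamma_v$, choose $\pi\in V$ with $v(\pi)=\delta$; by the identity $\pi\in\Br(E)$, whence $\delta'\leq v(\pi)=\delta$, and moreover $\Br(E)=\{b\mid v(b)\geq v(\pi)\}=\pi V$ is principal. If $\delta\notin\Gamma_v$, I would use that $\Gamma_v$, containing the bounded strictly increasing sequence $\{\gamma_n\}$, is not a discrete subgroup of $\R$ and hence is dense; consequently for every $\varepsilon>0$ there is $\mu\in\Gamma_v$ with $\delta<\mu<\delta+\varepsilon$, and any $b\in V$ with $v(b)=\mu$ lies in $\Br(E)$ by the identity, so $\delta'<\delta+\varepsilon$; letting $\varepsilon\to0$ gives $\delta'\leq\delta$. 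In this case $\Br(E)$ is not principal: a generator $b$ would satisfy $v(b)=\delta'=\delta\in\Gamma_v$, a contradiction. Conversely, if $\Br(E)=bV$ is principal then $\delta=\delta'=v(b)\in\Gamma_v$, which completes the equivalence $\delta\in\Gamma_v\Leftrightarrow\Br(E)$ principal.

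I do not anticipate a genuine obstacle; the only spot needing a line of care is the case $\delta\notin\Gamma_v$, where one must see that values of $\Gamma_v$ accumulate at $\delta$ from above. Invoking the dichotomy ``a subgroup of $\R$ is cyclic or dense'' is the shortest route, but one can stay self-contained: writing $a_n:=\delta-\gamma_n$, the $a_n$ are positive and strictly decreasing with $\sum_n(a_n-a_{n+1})=a_1<\infty$, so $\gamma_{n+1}-\gamma_n=a_n-a_{n+1}\to0$; fixing $n$ with $0<\gamma_{n+1}-\gamma_n<\varepsilon$ and adding this positive element of $\Gamma_v$ to $\gamma_{n+1}<\delta$ repeatedly until the sum first exceeds $\delta$ produces the required $\mu\in(\delta,\delta+\varepsilon)\cap\Gamma_v$.
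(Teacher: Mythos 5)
Your proof is correct, and it rests on the same essential fact as the paper's: since $E$ has non-zero breadth ideal, $V$ is non-discrete, so $\Gamma_v$ is dense in $\R$ and one can find elements of $\Gamma_v$ arbitrarily close to $\delta$ from above. The packaging differs slightly — the paper argues by contradiction (assume $\delta<\delta'$, pick $\gamma\in\Gamma_v$ strictly in between, and rule out both $a\in\Br(E)$ and $a\notin\Br(E)$), whereas you first establish the set identity $\Br(E)=\{b\in V\mid v(b)\geq\delta\}$ and then compute $\delta'$ directly — and you spell out two things the paper leaves tacit: why $\Gamma_v$ is dense in the present setting (a bounded strictly increasing sequence forces non-discreteness, or equivalently the non-zero breadth ideal hypothesis rules out the DVR case), and the details behind the equivalence $\delta\in\Gamma_v\Leftrightarrow\Br(E)$ principal, which the paper dismisses as ``straightforward.'' Your self-contained argument producing $\mu\in(\delta,\delta+\varepsilon)\cap\Gamma_v$ via the telescoping differences $\gamma_{n+1}-\gamma_n\to 0$ is a nice touch that avoids invoking the cyclic-or-dense dichotomy, though either route is fine.
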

\begin{proof}
Since $v(s_{n+1}-s_n)<v(b)$, for all $b\in\Br(E)$, we have $\delta\leq \delta'$. Suppose that $\delta<\delta'$, then since $\Gamma_v$ is dense in $\R$ there exists $\gamma\in\Gamma_v$ such that $\delta<\gamma<\delta'$. Let $\gamma=v(a)$, for some $a\in K$. If $a\in \Br(E)$ then $\gamma\geq \delta'$ which is not possible. If $a\notin\Br(E)$, then there exists $N\in\N$ such that for all $n\geq N$ we have $v(s_{n+1}-s_n)\geq \gamma$, which also is not possible. Hence $\delta=\delta'$. The last claim is straightforward.
\end{proof}
In particular, the set of all the pseudo-limits of a pseudoconvergent sequence with non-zero breadth ideal and with a pseudo-limit $\alpha\in K$ is equal to the ball $B(\alpha,\delta)=\{x\in K \mid v(x-\alpha)\geq \delta\}$.

\subsubsection{Pseudo-stationary sequences}\label{Remark pseudo-stationary}
Let $E=\{s_n\}_{n\in\N}\subseteq V$ be a pseudo-stationary sequence. Note that, in this case the breadth $\delta$ of $E$ is by definition in $\Gamma_v$, so that $\delta=v(d)$, for some $d\in K$. Moreover, the residue field $V/M$ is infinite. In fact, if $s_n'=\frac{s_n}{d}$, for each $n\in\N$, then $E'=\{s_n'\}_{n\in\N}\subset V\setminus M$ is a pseudo-stationary sequence with breadth $0$, so that there are infinitely many residue classes modulo the maximal ideal $M$. 

Suppose now that $\alpha\in K$ is pseudo-limit of a pseudo-stationary sequence $E$. In \cite[Remark 4.7]{ChabPolCloVal} it is remarked that any element of $\mathring{B}(\alpha,\gamma)=\{\beta\in K \mid v(\alpha-\beta)>\gamma\}$ is a pseudo-limit of $E$. However, if $\beta\in K$ is such that $v(\alpha-\beta)=\gamma$, then $v(s_n-\beta)\geq \gamma$ for every $n\in\N$. Since for all $n\not=m$ we have $\gamma=v(s_n-s_m)=v(s_n-\beta+\beta-s_m)$, for at most one $n'\in\N$ we may have the strict inequality $v(s_{n'}-\beta)>\gamma$. Hence, up to removing one element from $E$, any element of $B(\alpha,\gamma)$ is a pseudo-limit of $E$. In this broader sense,  any element of $E$ itself is a pseudo-limit of $E$.

\subsubsection{Pseudo-divergent sequences}\label{Pds}

If $V$ is discrete, then there are no pseudo-divergent sequences contained in $V$. On the other hand, if $\alpha\in K$ is a pseudo-limit of a pseudo-divergent sequence $E=\{s_n\}_{n\in\N}\subset K$ with breadth $\gamma$, then the set of all the pseudo-limits in $K$ of $E$ is equal to the open ball $\mathring{B}(\alpha,\gamma)=\{x\in K \mid v(x-\alpha)>\delta\}$ (see \cite[Remark 4.7]{ChabPolCloVal}). Note also that any element $s_k\in E$ is definitively a pseudo-limit of $E$, in the sense that, for all $n>k$ we have $v(s_n-s_k)=v(s_n-s_{n-1})>v(s_{n+1}-s_n)=v(s_{n+1}-s_k)$.

\begin{Rem}\label{V not discrete of V/M infinite}
We have seen that if $V$ admits a pseudo-monotone sequence $E=\{s_n\}_{n\in\N}$ with breadth $\gamma\in\R$, then $V$ is either non-discrete or the residue field $V/M$ is infinite. If $E$ is pseudo-stationary, then $V/M$ is necessarily infinite and if $E$ is pseudo-divergent or pseudo-convergent with non-zero breadth ideal then $V$ is necessarily non-discrete. In particular, the only pseudo-monotone sequences in a DVR are the pseudo-stationary sequences.
\end{Rem}

\subsection{Polynomial closure}\label{pol clos}
\begin{Def}
Let $S$ be a subset of $K$. The \emph{polynomial closure} of $S$ is the largest subset of $K$ over which the polynomials of $\Int(S,V)$ are integer-valued, namely:
$$\overline{S}=\{s\in K \mid \forall f\in\Int(S,V),f(s)\in V\}$$
Equivalently, the polynomial closure of $S$ is the largest subset $\overline{S}$ of $K$ such that $\Int(S,V)=\Int(\overline{S},V)$. A subset $S$ of $K$ such that $S=\overline{S}$ is called \emph{polynomially closed}.
\end{Def}
The main result of Chabert in \cite{ChabPolCloVal} is the following theorem, which will be essential in \S \ref{final result} for the proof of our main result.
\begin{Thm}\cite[Theorem 5.3]{ChabPolCloVal}\label{Thm Chabert}
Let $V$ be a valuation domain of rank one. Then the polynomial closure is a topological closure, that is, there exists a topology on $K$ for which the closed sets are exactly the polynomially closed sets. A basis for the closed sets for this topology is given by the finite unions of closed balls $B(a,\gamma)=\{x\in K \mid v(x-a)\geq \gamma\}$, for $a\in K$ and $\gamma\in\Gamma_v$.
\end{Thm}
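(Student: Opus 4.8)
The plan is to prove simultaneously that $S\mapsto\overline{S}$ is a Kuratowski closure operator and that the polynomially closed subsets of $K$ are exactly the arbitrary intersections of finite unions of closed balls $B(a,\gamma)$ with $a\in K$, $\gamma\in\Gamma_v$; the topology $\tau$ whose closed sets are declared to be these intersections is then the one asserted, and once $\{\text{polynomially closed}\}=\{\tau\text{-closed}\}$ is established it follows formally that polynomial closure is the closure operator of $\tau$ (and also that polynomial closure is stable under finite unions). Writing $V_f:=\{x\in K\mid f(x)\in V\}$ for $f\in K[X]$, one has $\overline{S}=\bigcap_{f\in\Int(S,V)}V_f$; since $\Int(T,V)\subseteq\Int(S,V)$ whenever $S\subseteq T$, this yields at once $S\subseteq\overline{S}$, monotonicity, idempotency (because $\Int(\overline{S},V)=\Int(S,V)$ by construction of $\overline{S}$), $\overline{\emptyset}=\emptyset$ (as $K[X]$ contains constants outside $V$), and the fact that an arbitrary intersection of polynomially closed sets is polynomially closed. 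Two substantial steps remain.

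First, I would show that every finite union of closed balls $B(a_i,\gamma_i)$ with $\gamma_i\in\Gamma_v$ is polynomially closed. For a single ball $B(a,\gamma)=a+tV$ (with $v(t)=\gamma$), the $K$-algebra automorphism of $K[X]$ sending $X$ to $a+tX$ induces an isomorphism $\Int(B(a,\gamma),V)\cong\Int(V)$ and conjugates $\overline{B(a,\gamma)}$ onto $\overline{V}$; since $X\in\Int(V)$ forces $\overline{V}\subseteq V$ we get $\overline{V}=V$, hence $\overline{B(a,\gamma)}=B(a,\gamma)$. For a finite union $F$, after merging nested balls I may assume the balls $B(a_i,\gamma_i)$ ($1\le i\le n$) are pairwise disjoint, so that $v(a_i-a_j)<\min(\gamma_i,\gamma_j)$ and $v(x-a_j)=v(a_i-a_j)$ for $x\in B(a_i,\gamma_i)$, $j\neq i$. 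Given $s\in K\setminus F$, I would order the indices so that $v(s-a_1)\ge\cdots\ge v(s-a_n)$ and produce a polynomial of the form $h(X)=c^{-1}\prod_i(X-a_i)^{m_i}$: the multiplicities $m_i\in\N$ are chosen successively large enough down the order so that $\sum_i m_i v(s-a_i)<m_k\gamma_k+\sum_{i\neq k}m_i v(a_k-a_i)$ for every $k$ (this is automatic for $k=1$, and for larger $k$ the constraint only involves earlier multiplicities, because of the ultrametric facts $v(s-a_i)<\gamma_i$, $v(a_1-a_i)\ge v(s-a_i)$, and the implication $v(s-a_i)>v(a_k-a_i)\Rightarrow i<k$); then $v(c)\in\Gamma_v$ is chosen, using density of $\Gamma_v$ in $\R$, strictly between $\sum_i m_i v(s-a_i)$ and $\min_k\bigl(m_k\gamma_k+\sum_{i\neq k}m_i v(a_k-a_i)\bigr)$. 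For this $h$ one has $h\in\Int(B(a_i,\gamma_i),V)$ for all $i$, hence $h\in\Int(F,V)$, while $v(h(s))<0$; so $s\notin\overline{F}$ and $F$ is polynomially closed.

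Second, I would prove the structural lemma: for every $f\in K[X]$ the set $V_f$ is empty, all of $K$, or a finite union of closed balls of $K$ of possibly irrational radius, i.e.\ sets of the form $\{x\in K\mid v(x-a)\ge\rho\}$ with $a\in K$ and $\rho\in\R$. Fixing an extension $\overline{v}$ of $v$ to the algebraic closure $\overline{K}$ and factoring $f=c\prod_j(X-\alpha_j)^{e_j}$ over $\overline{K}$, one analyzes the function $x\mapsto\overline{v}(f(x))=\overline{v}(c)+\sum_j e_j\,\overline{v}(x-\alpha_j)$ by a Newton-polygon argument that groups the roots $\alpha_j$ according to their mutual $\overline{v}$-distances; this exhibits $\{x\in\overline{K}\mid\overline{v}(f(x))\ge 0\}$ as a finite union of closed balls of $\overline{K}$, and one then checks that the trace on $K$ of a closed ball of $\overline{K}$ is empty or a closed ball of $K$. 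A closed ball $\{x\in K\mid v(x-a)\ge\rho\}$ with $\rho\notin\Gamma_v$ equals $\bigcap_{\gamma\in\Gamma_v,\ \gamma<\rho}B(a,\gamma)$, hence is $\tau$-closed, so $V_f$ is $\tau$-closed in every case.

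Assembling: a polynomially closed $S$ equals $\bigcap_{f\in\Int(S,V)}V_f$, hence is $\tau$-closed by the structural lemma; conversely a $\tau$-closed set is an intersection of finite unions of closed balls $B(a,\gamma)$ with $\gamma\in\Gamma_v$, each polynomially closed by the first step, hence polynomially closed. So the polynomially closed sets are exactly the $\tau$-closed sets. This family is closed under arbitrary intersection and, by distributivity, under finite union, and contains $K$ and $\emptyset$; thus $\tau$ is a genuine topology whose closed sets are the polynomially closed sets, with the finite unions of closed balls $B(a,\gamma)$ ($\gamma\in\Gamma_v$) as a basis of closed sets, and whose closure operator is $S\mapsto\overline{S}$. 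The two places where the argument has real content are the explicit polynomial construction separating a point from a finite disjoint union of balls, and the descent from $\overline{K}$ to $K$ in the structural lemma; I expect the latter — keeping track of which balls survive intersection with $K$ and with what centers and radii — to be the main obstacle.
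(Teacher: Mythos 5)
The paper does not prove this theorem; it is cited verbatim from Chabert \cite{ChabPolCloVal}, so there is no argument in the present source to compare against. Judged on its own merits, your proposal is sound. The reduction to showing that the polynomially closed sets coincide with arbitrary intersections of finite unions of closed balls $B(a,\gamma)$, $\gamma\in\Gamma_v$, is the right move: such intersections automatically form the closed sets of a topology, and once they are identified with the polynomially closed sets, the identity $\overline{S}=\bigcap_{f\in\Int(S,V)}V_f$ together with monotonicity and idempotency shows that $\overline{S}$ is the smallest such set containing $S$, i.e.\ the $\tau$-closure. Your separating polynomial $h=c^{-1}\prod_i(X-a_i)^{m_i}$ does the job: after ordering by decreasing $v(s-a_i)$, the constraint at index $k$ reduces, after discarding the $i>k$ terms (which only help, since $v(s-a_i)\le v(a_k-a_i)$ there), to
\[
\sum_{i<k} m_i\bigl(v(s-a_i)-v(a_k-a_i)\bigr) \;<\; m_k\bigl(\gamma_k - v(s-a_k)\bigr),
\]
with $\gamma_k-v(s-a_k)>0$ since $s\notin F$; so $m_k$ can be chosen inductively, and then $v(c)$ can even be taken to be $\min_k\bigl(m_k\gamma_k+\sum_{i\neq k}m_iv(a_k-a_i)\bigr)\in\Gamma_v$ (no density needed). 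For the structural lemma, the $\overline K$-picture closes cleanly: with distinct roots $\alpha_1,\ldots,\alpha_m$ of multiplicities $e_j$, set $N_\alpha(\rho)=\overline v(c)+\sum_j e_j\min(\overline v(\alpha-\alpha_j),\rho)$; this is piecewise linear, nondecreasing, tends to $-\infty$ as $\rho\to-\infty$ and to $+\infty$ as $\rho\to+\infty$, so $\{\rho:N_\alpha(\rho)\ge0\}=[\rho_\alpha,\infty)$, and if $\alpha$ is a closest root to $x$ and $\rho=\overline v(x-\alpha)$ then $\overline v(f(x))=N_\alpha(\rho)$; hence $\{x\in\overline K\mid\overline v(f(x))\ge0\}=\bigcup_\alpha B(\alpha,\rho_\alpha)$ for nonconstant $f$. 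The descent to $K$ is not the obstacle you anticipate: a closed ball $B$ of $\overline K$ meets $K$ either in $\emptyset$ or in $\{x\in K\mid v(x-x_0)\ge\rho\}$ for any $x_0\in B\cap K$, since $\overline v$ restricts to $v$ on $K$ and the ultrametric inequality lets you recenter. With the irrational-radius balls absorbed as nested intersections of $\Gamma_v$-balls, the assembly goes through, and your argument is correct.
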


\begin{Def}
The topology on the valued field $K$ which has the polynomially closed subsets as closed sets is called \emph{polynomial topology}.
\end{Def}

Chabert observes that the polynomial topology is in general weaker than the $v$-adic topology. They coincide if $V$ is discrete and with finite residue field, but the next example (which we will use in the following) shows that they may differ in general.

\begin{Ex}\label{polclosure ball}
Given $\alpha\in K$ and $\gamma\in\R$, in \cite[Proposition 3.2]{ChabPolCloVal} it is proved that the polynomial closure of the open ball $\mathring{B}(\alpha,\gamma)=\{x\in K\mid v(x-\alpha)>\gamma\}$ (which is closed in the $v$-adic topology) is equal to:
$$\overline{\mathring{B}(\alpha,\gamma)}=\left\{
\begin{array}{ll}
B(\alpha,\overline{\gamma}),\;\textnormal{ where }\overline\gamma=\inf\{\lambda\in\Gamma_v \mid \lambda>\gamma\}, &\textnormal{ if either }v\textnormal{ is discrete or }\gamma\notin\Gamma_v\\
B(\alpha,\gamma),& \textnormal{ otherwise}
\end{array}\right.$$
\end{Ex}

\begin{Rem}\label{open sets polyn topology} In particular, given $\alpha\in K$, the subsets of the form:
$$\bigcap_{i=1}^r\{x\in K \mid v(x-s_i)<\gamma_i\}$$
where $s_i\in K$ and $\gamma_i\in \Gamma_v$ are such that $v(\alpha-s_i)<\gamma_i$, for $i=1,\ldots,r$, form a fundamental system of open neighborhoods of $\alpha$ for the polynomial topology.
\end{Rem}

\section{Residually transcendental extensions}\label{Resid transc exten}

The following criterion, which appears for example in \cite[Theorem 10, chapt. VI, \S 5]{ZS2} or \cite[Theorem 19.15]{Gilmer}, establishes when an integrally closed domain $D$ is Pr\"ufer: $D$ must not admit a valuation overring $V$ whose residue field is a transcendental extension of the quotient field of the residue of $D$ modulo the center of $V$ on $D$. Recall that a valuation overring $V$ of an integral domain $D$ is a valuation domain $V$ contained between $D$ and its quotient field $K$. The center of a valuation overring $V$ of $D$ is the intersection of the maximal ideal of $V$ with $D$.

\begin{Thm}\label{criterionZS}
Let $D$ be an integrally closed domain and $P$ a prime ideal of $D$. Then $D_P$ is a valuation domain if and only if there is no valuation overring $V$ of $D$ centered in $P$ such that the residue field of $V$ is transcendental over the quotient field of $D/P$.
\end{Thm}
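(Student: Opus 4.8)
This is a classical result (the cited references \cite{ZS2,Gilmer} are exactly of this type), so the plan is to reconstruct the standard argument. First I would localize: since both conditions are local, replace $D$ by $A:=D_P$, which is still integrally closed, with maximal ideal $\mathfrak m=PD_P$ and residue field $k=A/\mathfrak m$, which is the quotient field of $D/P$. A valuation overring $V$ of $D$ has center $P$ precisely when $A\subseteq V$ and $V$ dominates $A$ (then the units of $V$ include $D\setminus P$), and for such $V$ the residue extension over $k$ is the residue extension over the quotient field of $D/P$; so the theorem becomes: \emph{$A$ is a valuation ring iff no valuation overring dominating $A$ has residue field transcendental over $k$}. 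The implication ``$A$ a valuation ring $\Rightarrow$ no such overring'' is immediate — a valuation overring dominating the valuation ring $A$ is a localization of $A$ at a prime, necessarily $\mathfrak m$, hence equals $A$, with residue field $k$ itself.

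For the converse I would argue the contrapositive: assuming $A$ is not a valuation ring, produce a valuation overring dominating $A$ with transcendental residue field. Not being a valuation ring, $A$ has incomparable nonzero principal ideals, hence a non-principal ideal $I=(a,b)$ with $a,b\in\mathfrak m\setminus\{0\}$. The crucial input, and the step I expect to be the main obstacle, is the claim that the fiber cone $F=\bigoplus_{n\ge0}I^{n}/\mathfrak m I^{n}$ is the polynomial ring $k[\overline a,\overline b]$ on the images $\overline a,\overline b$ of $a,b$ in $F_1=I/\mathfrak m I$; equivalently, the analytic spread of $I$ is $2$. This is where integral closedness is essential: if the analytic spread were $<2$, then $I$ would have a principal reduction $(c)\subseteq I$ (i.e.\ $I^{n+1}=cI^{n}$ for $n\gg0$ — when $k$ is finite one first passes to the Nagata ring $A(X)$, which is again local and integrally closed, has infinite residue field, and has the same analytic spread, then descends); but then every $x\in I$ is integral over $(c)$, so satisfies $x^{m}+c_1x^{m-1}+\dots+c_m=0$ with $c_i\in(c^{i})$, hence $x/c$ is integral over $A$, hence lies in $A$, forcing $I=(c)$ to be principal — a contradiction. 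Since $F$ is then a two-dimensional quotient of a polynomial ring in two variables, it must be that polynomial ring.

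Granting the claim, the rest is formal. I would look at the blow-up chart $A[b/a]\subseteq K$: from $A[b/a]=\bigcup_n a^{-n}I^{n}$ and $\mathfrak m A[b/a]=\bigcup_n a^{-n}\mathfrak m I^{n}$ one identifies $A[b/a]/\mathfrak m A[b/a]$ with the degree-zero part of $F[\overline a^{-1}]$, namely the polynomial ring $k[\overline{b/a}]$. Thus $\mathfrak q:=\mathfrak m A[b/a]$ is a prime of $A[b/a]$ with $\mathfrak q\cap A=\mathfrak m$ whose residue field $k(\overline{b/a})$ is transcendental over $k$. Localizing at $\mathfrak q$ yields a local ring dominating $A$ with that residue field, and the standard existence theorem for valuations (Zorn's lemma on local subrings of $K$ dominating it) provides a valuation ring $V$ of $K$ dominating it. Then $V$ is a valuation overring of $D$, it dominates $A$ — so it is centered at $P$ — and its residue field contains $k(\overline{b/a})$, hence is transcendental over $k$, the quotient field of $D/P$. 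This $V$ witnesses the failure of the right-hand condition, completing the contrapositive.

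So the only real work is the claim that incomparability of $a$ and $b$ in an integrally closed local ring genuinely produces algebraic independence of $\overline a,\overline b$ modulo $\mathfrak m$; the remainder is bookkeeping and invocations of the existence theorem for valuations. An alternative, slightly more elementary but essentially equivalent route would start from any $t\in K$ with $t,1/t\notin A$ — so $t$ and $1/t$ are not integral over $A$ — and analyze the rings $A[t]$, $A[1/t]$ together with a prime over $\mathfrak m$ in one of them; but one would again have to rule out that the image of $t$ there is algebraic over $k$, which is the same difficulty.
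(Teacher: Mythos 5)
The paper itself does not prove this statement; it cites Zariski--Samuel (Vol.\ II, Ch.\ VI, \S 5, Thm.\ 10) and Gilmer (Thm.\ 19.15), so there is no in-paper argument to match your proposal against. Your reconstruction is correct. The localization to $A=D_P$, the easy implication (a valuation overring dominating the valuation ring $A$ is a localization of $A$ at a prime containing $\mathfrak{m}$, hence $A$ itself), and the final passage from the prime $\mathfrak{q}=\mathfrak{m}A[b/a]$ to a valuation ring of $K$ dominating $A[b/a]_{\mathfrak{q}}$ --- and therefore dominating $A$ with residue field containing $k(\overline{b/a})$ --- are all handled properly. The one substantive point, which you rightly isolate, is that for $a,b\in\mathfrak{m}\setminus\{0\}$ with $a/b,b/a\notin A$ the fiber cone $F=\bigoplus_{n}I^{n}/\mathfrak{m}I^{n}$ of $I=(a,b)$ is the full polynomial ring $k[\bar a,\bar b]$, and your argument for it is sound: $\ell(I)\ge 1$ since $I\ne 0$ in a domain; if $\ell(I)=1$ then (after passing to $A(X)$ when $k$ is finite, and descending by $\mu(I)=\mu(IA(X))$) $I$ has a principal reduction $(c)$, every $x\in I$ is integral over $(c)$, so $x/c$ is integral over the integrally closed $A$ and lies in $A$, forcing $I=(c)$ --- contradicting the non-principality of $I$ which follows from incomparability; hence $\ell(I)=2$ and the two-dimensional quotient $F$ of $k[X,Y]$ is $k[X,Y]$ because $k[X,Y]$ is a two-dimensional domain. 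Routing the key transcendence through Northcott--Rees reduction theory and the Nagata-ring device is a genuinely different, more machinery-dependent path than the hands-on content/valuation arguments one finds in the cited textbooks; what it buys you is a short, conceptually transparent proof, at the cost of invoking facts (existence of minimal reductions, behavior of analytic spread under $A\to A(X)$) that are themselves nontrivial. The underlying geometry is the same in either route: blow up $\mathfrak{m}$ along $(a,b)$, note the fiber over the closed point is a projective line over $k$, and use a point of that line to manufacture a dominating valuation with transcendental residue. I see no gaps.
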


By means of this Theorem, we are going to show that an integrally closed domain of the form $\Int(S,V)$, $S\subseteq V$, is not Pr\"ufer exactly when it admits a valuation overring lying over $V$ and whose residue fields extension is transcendental.

\begin{Def}
A valuation domain $\mcW$ of the field of rational functions $K(X)$  is a \emph{residually transcendental extension} of $V=\mcW\cap K$  (or simply residually transcendental extension if $V$ is understood) if the residue field of $\mcW$ is a transcendental extension of the residue field of $V$.
\end{Def}

The residually transcendental extensions of $V$ to $K(X)$ have been completely described by Alexander and Popescu (\cite{AP}). In order to describe these valuation domains, we need to introduce the following class of valuations on $K(X)$.

\begin{Def}\label{Valphagamma}
Let $\alpha\in K$ and $\delta$ an element of a value group $\Gamma$ which contains $\Gamma_v$. For $f\in K[X]$ such that $f(X)=a_0+a_1(X-\alpha)+\ldots+a_n(X-\alpha)^n$, we set:
$$v_{\alpha,\delta}(f)=\inf\{v(a_i)+i\delta \mid i=0,\ldots,n\}$$
The function $v_{\alpha,\delta}$ naturally extends to a valuation on $K(X)$ (\cite[Chapt. VI, \S. 10, Lemme 1]{Bourb}). We denote by $V_{\alpha,\delta}$ the valuation domain associated to $v_{\alpha,\delta}$, i.e.: $V_{\alpha,\delta}=\{\varphi\in K(X) \mid v_{\alpha,\delta}(\varphi)\geq0\}$. Clearly, $V_{\alpha,\delta}$ lies over $V$. We let also $M_{\alpha,\delta}=\{\varphi\in K(X) \mid v_{\alpha,\delta}(\varphi)>0\}$ be the maximal ideal of $V_{\alpha,\delta}$.
\end{Def}
\begin{Rem}\label{descriptionValphagamma}
Note that, if $\gamma\in\Gamma_v$, $\gamma\geq0$ and $d\in V$ is any element such that $v(d)=\gamma$, then it is easy to see that:
\begin{align*}
V_{\alpha,\gamma}=V\left[\frac{X-\alpha}{d}\right]_{M\left[\frac{X-\alpha}{d}\right]},\;\;V_{\alpha,\gamma}\cap K[X]=V\left[\frac{X-\alpha}{d}\right],\;\;M_{\alpha,\gamma}\cap K[X]=M\left[\frac{X-\alpha}{d}\right]
\end{align*}
In general, if $\gamma\in\R$, then $V_{\alpha,\gamma}\cap K[X]=\{f(X)=\sum_{k\geq 0}a_k(X-\alpha)^k\in K[X] \mid v(a_k)+k\gamma\geq0,\forall k\}$. As in \cite[\S 4]{ChabIntValValField}, in this case we set $V[(X-\alpha)/\gamma]$ to be $V_{\alpha,\gamma}\cap K[X]$.
\end{Rem}

In \cite[p. 580]{APZ1} the authors say that $v_{\alpha,\delta}$ is residually transcendental if and only if  $\delta$ has finite order over $\Gamma_v$. For the sake of the reader we give a self-contained proof here.

\begin{Lemma}\label{characterization residually transcendental extensions}
Let $\alpha\in K$ and $\delta$ an element of a value group $\Gamma$ which contains $\Gamma_v$.  Then $v_{\alpha,\delta}$ is residually transcendental if and only if $\delta$ has finite order over $\Gamma_v$, i.e., there exists $n\in\N$ such that $n\delta\in \Gamma_v$.
\end{Lemma}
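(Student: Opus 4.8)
The plan is to analyze the residue field of $V_{\alpha,\delta}$ directly from the definition of the valuation $v_{\alpha,\delta}$ and to exhibit explicitly a transcendental element over $V/M$ precisely when $\delta$ has finite order over $\Gamma_v$. Set $W=V_{\alpha,\delta}$ with maximal ideal $N=M_{\alpha,\delta}$, and write $t=X-\alpha$ so that $v_{\alpha,\delta}(t)=\delta$. Every nonzero $f\in K[X]$ has a well-defined value $v_{\alpha,\delta}(f)=\inf\{v(a_i)+i\delta\}$; since $K(X)$ is the fraction field of $K[X]$ the value group $\Gamma_{W}$ is the subgroup of $\Gamma$ generated by $\Gamma_v$ and $\delta$, hence $\Gamma_W/\Gamma_v$ is the cyclic group generated by the image of $\delta$.

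For the ``if'' direction, assume $n\delta\in\Gamma_v$ with $n$ minimal, say $n\delta=v(c)$ for some $c\in K^\times$. Then $\theta:=t^n/c$ satisfies $v_{\alpha,\delta}(\theta)=0$, so $\theta\in W\setminus N$ and its residue $\overline{\theta}\in W/N$ is nonzero. I claim $\overline{\theta}$ is transcendental over $V/M$. Suppose $g(\overline{\theta})=0$ for some nonzero polynomial $g(Y)=\sum_{j}b_j Y^j$ with coefficients $b_j\in V$, not all in $M$; lifting, $g(\theta)=\sum_j b_j t^{nj}/c^j\in N$, i.e. $v_{\alpha,\delta}\!\left(\sum_j (b_j/c^j) t^{nj}\right)>0$. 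But the exponents $nj$ of $t$ are distinct, so by the definition of $v_{\alpha,\delta}$ the value of this polynomial in $t$ equals $\inf_j\{v(b_j)-jv(c)+nj\delta\}=\inf_j\{v(b_j)\}$, using $n\delta=v(c)$. Since some $b_j\notin M$ we get $\inf_j v(b_j)=0$, contradicting that the value is positive. Hence $\overline{\theta}$ is transcendental and $W$ is residually transcendental. (One should also remark that $W/N$ is not larger than $V/M(\overline\theta)$ up to algebraic extension — but for this implication only producing one transcendental element is needed.)

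For the ``only if'' direction, suppose $\delta$ has infinite order over $\Gamma_v$, so $\Gamma_W=\Gamma_v\oplus\Z\delta$ as ordered groups. The goal is to show $W/N$ is algebraic over $V/M$. Take any $\varphi\in W\setminus N$; writing $\varphi=f/g$ with $f,g\in K[X]$ and $v_{\alpha,\delta}(f)=v_{\alpha,\delta}(g)$, expand $f=\sum a_i t^i$, $g=\sum a'_i t^i$. Because the exponents of $t$ are distinct and $\Z\delta\cap\Gamma_v=0$, the infimum defining $v_{\alpha,\delta}(f)$ is attained at a \emph{unique} index $i_0$ (two indices $i<i'$ would force $v(a_i)-v(a_{i'})=(i'-i)\delta\in\Gamma_v$, impossible), and likewise for $g$ at a unique index $j_0$; matching values forces $i_0=j_0$ and $v(a_{i_0})=v(a'_{j_0})$. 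Then modulo $N$ one checks $\overline{\varphi}=\overline{a_{i_0}/a'_{i_0}}\in V/M$. Thus $W/N=V/M$, in particular algebraic.

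The step I expect to be the crux is the uniqueness-of-minimum argument in the ``only if'' direction: one must verify cleanly that when $\delta$ has infinite order the value $v_{\alpha,\delta}(f)$ of a polynomial $f=\sum a_i(X-\alpha)^i$ is realized at exactly one monomial, and that this monomial ``survives'' in the residue field so that the residue of any unit of $W$ already lies in $V/M$. This is essentially the observation that $v_{\alpha,\delta}$ is a valuation of ``Gauss type'' twisted by $\delta$, and it is the place where the hypothesis on the order of $\delta$ is genuinely used. The ``if'' direction is comparatively routine once the candidate transcendental element $t^n/c$ is written down. I would present the two directions in that order, after first recording the description of $\Gamma_W$.
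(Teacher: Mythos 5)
Your proof is correct and follows essentially the same strategy as the paper's: for the ``if'' direction you exhibit $\theta=(X-\alpha)^n/c$ as a transcendental unit via the value formula (the paper's argument is the same, phrased with a monic dependence relation); for the ``only if'' direction you exploit the fact that infinite order of $\delta$ forces the infimum $\inf_i\{v(a_i)+i\delta\}$ to be attained at a \emph{unique} monomial, which is exactly the mechanism the paper uses.

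The one place where you genuinely improve on the paper's write-up is the ``only if'' direction. The paper verifies only that a \emph{polynomial} $g\in V_{\alpha,\delta}\setminus M_{\alpha,\delta}$ has residue $\overline{a_0}\in V/M$, and then declares $v_{\alpha,\delta}$ not residually transcendental. Strictly speaking, one still has to pass from polynomials to arbitrary units $\varphi=f/g$ of $V_{\alpha,\delta}$: it is not automatic that the residue field equals the fraction field of the image of $K[X]\cap V_{\alpha,\delta}$, especially since when $\delta$ has infinite order the value $v_{\alpha,\delta}(g)$ of a polynomial $g$ with $v_{\alpha,\delta}(\varphi)=0$ need not lie in $\Gamma_v$, so one cannot simply normalize $f$ and $g$ by a constant. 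Your computation --- matching the unique minimizing indices $i_0=j_0$ of $f$ and $g$, deducing $v(a_{i_0})=v(a'_{i_0})$, and checking that $\varphi-a_{i_0}/a'_{i_0}\in M_{\alpha,\delta}$ --- fills this gap cleanly and shows the stronger statement that $W/N=V/M$.

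Two minor remarks. First, the assertion ``$\Gamma_W=\Gamma_v\oplus\Z\delta$ as ordered groups'' is slightly imprecise (the order is the one inherited from $\Gamma$, not the lexicographic order on a direct sum), and it is also not needed: the uniqueness of the minimizing index uses only the hypothesis $k\delta\notin\Gamma_v$ for $k\neq0$, not the description of $\Gamma_W$. Second, in the ``if'' direction, minimality of $n$ is not actually used --- any $n$ with $n\delta\in\Gamma_v$ works --- so that qualifier can be dropped, as in the paper.
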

\begin{proof}
Suppose there exists $n\geq 1$ such that $n\delta=\gamma=v(c)\in\Gamma_v$, for some $c\in K$. Clearly, the $v_{\alpha,\delta}$-adic valuation of $f(X)=\frac{(X-\alpha)^n}{c}$ is zero. We claim that over the residue field of $V$ the polynomial $f(X)$ is transcendental. In fact, suppose there exist $a_{d-1},\ldots,a_0\in V$ such that
$$\overline{f}^{d}+\overline{a_{d-1}}\overline{f}^{d-1}+\ldots+\overline{a_{1}}\overline{f}+\overline{a_{0}}=0$$
that is,
$$g=f^d+a_{d-1}f^{d-1}+\ldots+a_1 f+a_0\in M_{\alpha,\delta}$$
However, if we set $a_d=1$, we have:
$$v_{\alpha,\delta}(g)=\inf\{v(a_i)-in\delta+ni\delta \mid i=0,\ldots,d\}=\inf\{v(a_i) \mid i=0,\ldots,d\}=0$$
which is a contradiction.

Conversely, suppose that $n\delta\notin\Gamma_v$, for each $n\geq1$. Let $g\in V_{\alpha,\delta}\setminus M_{\alpha,\delta}$, say $g(X)=\sum_{i\geq0}^d a_i (X-\alpha)^i$; then we have
$$v_{\alpha,\delta}(g)=\inf\{v(a_i)+i\delta \mid i=0,\ldots,d\}=0\Leftrightarrow v(a_0)=0\;\;\&\;\; v(a_i)+i\delta>0, \forall i=1,\ldots,d$$
because of the assumption on $\delta$. Then $g(X)$ is congruent to $g(0)=a_0$ modulo $M_{\alpha,\delta}$ so that over the residue field $g(X)$ is algebraic.
\end{proof}

\begin{Rem}\label{delta in Gammav} 
Suppose that $\delta\in\Gamma_v$; in particular, $v_{\alpha,\delta}$ is residually transcendental. If we consider the following expansion $f(X)=b_0+b_1\frac{X-\alpha}{d}+\ldots+b_n(\frac{X-\alpha}{d})^n$, where $d\in K$ is such that $v(d)=\delta$, then $v_{\alpha,\delta}(f)=\inf\{v(b_i)\mid i=0,\ldots,n\}$. In particular, by \cite[Chapt. VI, \S10, Prop. 2]{Bourb}, $v_{\alpha,\delta}$ is the unique valuation on $K(X)=K(\frac{X-\alpha}{d})$ for which the image of $\frac{X-\alpha}{d}$ in the residue field is transcendental over $V/M$ (note that $\frac{X-\alpha}{d}$ has valuation zero).
\end{Rem}

Let $\overline{K}$ be a fixed algebraic closure of $K$\footnote{$\overline{K}$ is not to be confused with the polynomial closure of $K$, which is $K$ itself. Since both symbols are now equally customary, we decide to change neither of them.} and $\Gamma_{\overline v}=\Gamma_v\otimes_{\Z}\Q$, the divisible hull of $\Gamma_v$. The following theorem characterizes the residually transcendental extensions of $V$ to $K(X)$ (see also \cite[Theorem 3.11]{Kuh} for an alternative and more recent approach). The theorem holds for any valuation domain (i.e., no matter of its dimension). For the sake of the reader we give a sketch of the proof.

\begin{Thm}\label{characterization residually transcendental exts}\cite[Proposition 2 \& Th\'eor\`eme 11]{AP}
Let $\mcW$ be a residually transcendental extension of $V$ to $K(X)$. Then there exist $\alpha\in\overline{K}$, $\gamma\in\Gamma_{\overline{v}}$ and a valuation $\overline{W}$ of $\overline{K}$ lying over $V$ such that $\mcW=\overline W_{\alpha,\gamma}\cap K(X)$.
\end{Thm}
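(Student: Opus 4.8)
The plan is to reduce to the case of an algebraically closed ground field and then to exhibit $\mcW$ as a Gauss-type valuation $\overline{W}_{\alpha,\gamma}$ for a suitably chosen pair $(\alpha,\gamma)$, with $\alpha$ playing the role of the ``center'' in $\overline{K}$ that is $v$-closest to $X$. \emph{Reduction.} First I extend the valuation of $\mcW$ to a valuation $\mcW'$ of $\overline{K}(X)$ (possible since $\overline{K}(X)/K(X)$ is algebraic) and set $\overline{W}=\mcW'\cap\overline{K}$, a valuation of $\overline{K}$ lying over $V$. Since $\overline{K}/K$ is algebraic, the residue field of $\overline{W}$ is algebraic over $V/M$; hence the residue field of $\mcW'$, which contains that of $\mcW$, cannot be algebraic over the residue field of $\overline{W}$ (else it would be algebraic over $V/M$), so $\mcW'$ is a residually transcendental extension of $\overline{W}$. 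If I show that $\mcW'=\overline{W}_{\alpha,\gamma}$ for some $\alpha\in\overline{K}$ and some $\gamma$ in the value group $\Gamma$ of $\overline{W}$, then intersecting with $K(X)$ gives $\mcW=\overline{W}_{\alpha,\gamma}\cap K(X)$; and since $\overline{K}$ is algebraically closed, $\Gamma$ is divisible and torsion over $\Gamma_v$, hence $\Gamma=\Gamma_{\overline{v}}$. Therefore I may replace $(K,V,\mcW)$ by $(\overline{K},\overline{W},\mcW')$ and assume: $K$ is algebraically closed with (then algebraically closed) residue field $k$, $\Gamma$ is the value group of the ground valuation, $w$ denotes the valuation of $\mcW$ and its restriction to $K$, and $\mcW$ is residually transcendental; the goal is now $\mcW=\overline{W}_{\alpha,\gamma}$ with $\alpha\in K$, $\gamma\in\Gamma$.

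\emph{The minimal pair.} By the Abhyankar inequality applied to $\mcW\supseteq\overline{W}$ inside $K(X)\supseteq K$, the transcendence degree of the residue field extension plus the rational rank of the value-group extension is at most $\mathrm{trdeg}(K(X)/K)=1$; residual transcendence makes the first summand $\geq 1$, so the value group of $\mcW$ is again $\Gamma$. Now set $\gamma:=\sup\{w(X-a)\mid a\in K\}\in\R\cup\{+\infty\}$. The heart of the proof is to show this supremum is attained at some $\alpha\in K$, so that in particular $\gamma<\infty$ and $\gamma\in\Gamma$. Suppose not, and pick $\alpha_n\in K$ with $\gamma_n:=w(X-\alpha_n)$ strictly increasing to $\gamma$; then $w(\alpha_{n+1}-\alpha_n)=\gamma_n$, so $\{\alpha_n\}$ is a pseudo-convergent sequence of breadth $\gamma$ (allowing $\gamma=+\infty$). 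Factoring an arbitrary $f\in K[X]$ into linear factors over the algebraically closed field $K$ and using that $w(X-\beta)<\gamma$ for every $\beta\in K$, one checks that $w(X-\beta)=w(\alpha_n-\beta)$ once $\gamma_n>w(X-\beta)$, hence $w(f)=w(f(\alpha_n))$ for $n\gg 0$; thus $w$ is the limit valuation attached to the pseudo-convergent sequence $\{\alpha_n\}$.

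If $\{\alpha_n\}$ is of transcendental type, this limit valuation is an immediate extension of $\overline{W}$, so its residue field is $k$, contradicting residual transcendence. If $\{\alpha_n\}$ is of algebraic type, then, $K$ being algebraically maximal (it has no proper algebraic extension, hence no proper immediate algebraic one), $\{\alpha_n\}$ admits a pseudo-limit $\beta\in K$; then $w(\beta-\alpha_n)=\gamma_n=w(X-\alpha_n)$ forces $w(X-\beta)\geq\gamma_n$ for all $n$, so $w(X-\beta)\geq\gamma$, whence the supremum is attained at $\beta$ — a contradiction (when $\gamma=+\infty$ this last inequality is itself absurd, since $w(X-\beta)$ is finite). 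Either way we reach a contradiction, so the supremum is attained at some $\alpha\in K$ and $\gamma\in\Gamma$.

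\emph{Identification.} Choose $c\in K$ with $w(c)=\gamma$ and put $Y:=(X-\alpha)/c$, so $w(Y)=0$. If the residue of $Y$ in the residue field of $\mcW$ were algebraic over $k$, then, $k$ being algebraically closed, it would equal $\overline{u}$ for some unit $u$ of $\overline{W}$; but then $w\bigl(X-(\alpha+cu)\bigr)=w(c)+w(Y-u)>\gamma$, contradicting the maximality of $\gamma$. Hence the residue of $Y$ is transcendental over $k$, and by the uniqueness of the Gauss extension (Remark~\ref{delta in Gammav}, applied with base valuation $\overline{W}$ and with $\overline{K}(X)=\overline{K}(Y)$), $\mcW$ is the unique valuation of $\overline{K}(Y)$ extending $\overline{W}$ for which the image of $Y$ is transcendental over $k$, namely $\overline{W}_{\alpha,\gamma}$. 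Combined with the reduction step this proves the theorem. The main obstacle is the ``minimal pair'' step: excluding a non-attained supremum is precisely where the theory of pseudo-convergent sequences enters, through the two facts that a transcendental-type pseudo-convergent sequence has immediate limit valuation and that an algebraically closed valued field is algebraically maximal.
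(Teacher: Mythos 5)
Your proof is correct, but it takes a genuinely different route from the paper's. The paper also reduces to the case of an algebraically closed ground field, but from there it simply invokes Alexandru--Popescu \cite[Proposition 2]{AP} as a black box: that result directly gives $\mcW$ as the valuation domain of $w(f)=\inf_i v(a_i)$ for an expansion $f=\sum_i a_i(aX-b)^i$, and the paper merely rewrites this as $v_{\alpha,\gamma}$ with $\alpha=b/a$, $\gamma=-v(a)$. What you do instead is \emph{prove} the substance of that cited proposition from scratch: you use Abhyankar's inequality to force the value group of $\mcW$ down to the divisible $\Gamma$, introduce $\gamma=\sup_a w(X-a)$, show the supremum is attained by ruling out both the transcendental-type case (the limit valuation of a pseudo-convergent sequence of transcendental type is immediate, contradicting residual transcendence) and the algebraic-type case (an algebraically closed valued field is trivially algebraically maximal, so a pseudo-limit would land in $K$ and push $\gamma$ up), and then pin down $\mcW$ via the uniqueness of the Gauss extension. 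This buys you self-containedness, and it does so using exactly the Kaplansky/Ostrowski toolkit (pseudo-convergent sequences, transcendental vs.\ algebraic type, immediacy) that the rest of the paper is already built around, which is aesthetically pleasing; the cost is length and the implicit restriction to rank one in treating $\gamma$ as a real number (the paper remarks the theorem is valid for arbitrary rank, for which the supremum step would need to be reformulated; AP's cited result covers that generality, whereas your argument as written does not). Both proofs are valid for the purposes of this paper.
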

\begin{proof}  Let $\overline{\mcW}$ be an extension of $\mcW$ to $\overline{K}(X)$. It is clear that $\overline{\mcW}$ is residually transcendental over $\overline{\mcW}\cap\overline{K}$. Thus, without loss of generality we may assume that $K$ is algebraically closed. Now, by \cite[Proposition 2]{AP}, $\mcW$ is the valuation domain associated to a valuation $w$ on $K(X)$ which on a polynomial $f\in K[X]$ is defined as:
$$w(f)=\inf_i\{v(a_i)\},\;\;\textnormal{ if } f(X)=\sum_i a_i (aX-b)^i$$
for some $a,b\in K$, $a\not=0$. Now, if we write $f(X)=\sum_i b_i(X-\alpha)^i$ where $b_i=a_i a^i$ and $\alpha=b/a$, we get that $v(a_i)=v(b_i)-iv(a)$, so finally $w=v_{\alpha,\gamma}$, where $\gamma=-v(a)$ (see also Remark \ref{delta in Gammav}).
\end{proof}

\begin{Def}\label{def Valphagamma}
Given $\alpha\in\overline{K}$, $\gamma\in\Gamma_{\overline{v}}$ and a valuation domain $\overline{W}$ of $\overline{K}$ lying over $V$, we denote by $V_{\alpha,\gamma}^{\overline W}$ the valuation domain $\overline{W}_{\alpha,\gamma}\cap K(X)$.  If  $\overline{W}$ is understood we denote $V_{\alpha,\gamma}^{\overline W}$ by $V_{\alpha,\gamma}$.
\end{Def}

\begin{Rem}\label{example}
Let $(\alpha,\gamma)\in\overline{K}\times\Gamma_{\overline{v}}$ be fixed. The valuation domain $V_{\alpha,\gamma}^{\overline W}$ depends on the extension $\overline W$ of $V$ to $\overline{K}$. For example, let $w,w'$ be the $(2-i)$ and $(2+i)$-adic valuations of $\Q(i)$, respectively, which extend the $5$-adic valuation on $\Q$. Then,
$$
w_{i,1}(-X+2)=1,\;\;w'_{i,1}(-X+2)=0.
$$
In particular, $-X+2$ is a unit in $W'_{i,1}$ and is in the maximal ideal of $W_{i,1}$, so the contractions of these valuation domains to $\Q(X)$ cannot be the same. 

Therefore, whenever we write $V_{\alpha,\gamma}$ without any reference to an extension of $V$ to $\overline{K}$, we are implicitly assuming that such an extension has been fixed in advance. Note that there is no ambiguity in writing $V_{\alpha,\gamma}$ whenever $(\alpha,\gamma)\in K\times\Gamma_v$.

Note also that, given a valuation domain $V_{\alpha,\gamma}^{\overline W}$, where $(\alpha,\gamma)\in\overline{K}\times\Gamma_{\overline{v}}$, we may assume that there exists a finite field extension $F$ of $K$ and a valuation domain $W$ of $F$ lying over $V$ such that $\alpha$ is in $F$ and $\gamma$ is in $\Gamma_w$, the value group of $W$, so that $V_{\alpha,\gamma}^{\overline W}=V_{\alpha,\gamma}^{ W}$.
\end{Rem}

\begin{Rem}\label{Valphagamma polynomial linearly ordered}\label{equality Valphagamma}
It is not difficult to prove that the family of rings $V_{\alpha,\gamma}\cap K[X]$, $\alpha\in K,\gamma\in\R$, has a natural ordering, namely: 
$$V_{\alpha_1,\gamma_1}\cap K[X]\subseteq V_{\alpha_2,\gamma_2}\cap K[X]\Leftrightarrow \gamma_1\leq\gamma_2\; \textnormal{ and }v(\alpha_1-\alpha_2)\geq \gamma_1.$$
Equivalently, the above containment holds if and only if $B(\alpha_1,\gamma_1)\supseteq  B(\alpha_2,\gamma_2)$. In particular, 
$$V_{\alpha_1,\gamma_1}\cap K[X]=V_{\alpha_2,\gamma_2}\cap K[X]\Leftrightarrow\gamma_1=\gamma_2\; \textnormal{ and }v(\alpha_1-\alpha_2)\geq \gamma_1,$$
or, equivalently, $B(\alpha_1,\gamma_1)=B(\alpha_2,\gamma_2)$. If this last case holds, then $V_{\alpha_1,\gamma_1}=V_{\alpha_2,\gamma_2}$.

See also \cite[Proposition 1.1]{APZ3}, where the same result is given for any valuation $V$ but only for $\gamma\in \Gamma_v\otimes_{\Z}\Q$.
\end{Rem}

The following lemma is based on a well-known result.
\begin{Lemma}\label{ValphagammacapK[X] not Prufer}
Let $(\alpha,\gamma)\in K\times\Gamma_v$. Then $V_{\alpha,\gamma}\cap K[X]$ is not a Pr\"ufer domain.
\end{Lemma}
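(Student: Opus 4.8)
The plan is to recognize $V_{\alpha,\gamma}\cap K[X]$ as a one-variable polynomial ring over $V$ and then invoke the classical fact that such a ring is never Pr\"ufer when $V$ is not a field. Since $\gamma\in\Gamma_v$, pick $d\in V$ with $v(d)=\gamma$; by Remark~\ref{descriptionValphagamma}, $V_{\alpha,\gamma}\cap K[X]=V\bigl[\tfrac{X-\alpha}{d}\bigr]$, and setting $Y:=\tfrac{X-\alpha}{d}$ (which is transcendental over $K$) this ring is, as a $V$-algebra, the polynomial ring $V[Y]$. Because $V$ has rank one, its maximal ideal $M$ is nonzero, so $V$ is not a field; it thus remains to show that $V[Y]$ is not a Pr\"ufer domain.

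For the latter I would give a short self-contained argument rather than merely cite it. The quickest route is the elementwise characterization (see \cite{Gilmer}): a domain $D$ is Pr\"ufer if and only if $(a^2,b^2)=(a,b)^2$ for all $a,b\in D$, equivalently $ab\in(a^2,b^2)$. Take $\pi\in M\setminus\{0\}$ and test $a=\pi$, $b=Y$: an identity $\pi Y=A\pi^2+BY^2$ with $A,B\in V[Y]$ is impossible, since comparing the coefficients of $Y$ (the term $BY^2$ contributes only in degrees $\ge 2$) forces $\pi\in\pi^2 V$, i.e.\ $\pi$ a unit of $V$, contradicting $\pi\in M$. Alternatively one can stay inside the framework already developed: $V[Y]$ is integrally closed, and for any $\gamma'\in\Gamma_v$ with $\gamma'>0$ (which exists as $V$ is a nontrivial rank-one valuation domain) the valuation domain $V_{0,\gamma'}$ in the variable $Y$ is an overring of $V[Y]$, residually transcendental by Lemma~\ref{characterization residually transcendental extensions}, with center on $V[Y]$ the prime $\mathfrak m:=M\,V[Y]+Y\,V[Y]$, for which $V[Y]/\mathfrak m\cong V/M$; since the residue field of $V_{0,\gamma'}$ is $(V/M)(\overline Y)$, a transcendental extension of $V/M$, Theorem~\ref{criterionZS} gives that $(V[Y])_{\mathfrak m}$ is not a valuation domain, hence $V[Y]$ is not Pr\"ufer.

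There is no real obstacle here: the only steps requiring a line of verification are the change-of-variables identity taken from Remark~\ref{descriptionValphagamma} and, in the second approach, the elementary computation that the center of $V_{0,\gamma'}$ on $V[Y]$ is exactly $\mathfrak m$. The lemma is genuinely "based on a well-known result"; writing the proof out is worthwhile mainly for fixing the explicit isomorphism $V_{\alpha,\gamma}\cap K[X]\cong V[Y]$, which is what gets used downstream.
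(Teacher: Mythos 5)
Your proof is correct and hinges on the same key step as the paper's: Remark~\ref{descriptionValphagamma} identifies $V_{\alpha,\gamma}\cap K[X]$ with $V\bigl[\tfrac{X-\alpha}{d}\bigr]\cong V[Y]$, reducing the claim to the classical fact that a polynomial ring over a non-field domain is never Pr\"ufer. The paper simply cites that fact as well known, while you supply two self-contained verifications of it; both are sound, and the second one (exhibiting $V_{0,\gamma'}$ as a residually transcendental overring of $V[Y]$ centered at $MV[Y]+YV[Y]$ and invoking Theorem~\ref{criterionZS}) is particularly apt because it stays entirely inside the machinery the paper has already set up.
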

\begin{proof}
By Remark \ref{descriptionValphagamma}, $V_{\alpha,\gamma}\cap K[X]=V[\frac{X-\alpha}{d}]$, where $d\in K$ is such that $v(d)=\gamma$. It is a well-known result that $V[\frac{X-\alpha}{d}]$ is not a Pr\"ufer domain. %
\end{proof}

\begin{Lemma}\label{polynomial ring not Prufer}
Let $\overline\mcW$ be a valuation domain of $\overline{K}(X)$ such that $\overline\mcW\cap\overline{K}[X]$ is not Pr\"ufer. Then $\overline\mcW\cap K[X]$ is not Pr\"ufer. 
\end{Lemma}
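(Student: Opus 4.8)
The plan is to apply the criterion of Theorem~\ref{criterionZS} twice: first over $\overline K(X)$, to extract from the failure of the Pr\"ufer property of $B:=\overline{\mcW}\cap\overline K[X]$ a valuation overring carrying a transcendental residual extension, and then over $K(X)$, after restricting that valuation to the level of $A:=\overline{\mcW}\cap K[X]$. Observe first that $A=B\cap K(X)$ (because $\overline K[X]\cap K(X)=K[X]$) and that $A$ and $B$ are integrally closed, being intersections inside $\overline K(X)$ of the valuation domain $\overline{\mcW}$ with an integrally closed polynomial ring. We shall also use that $\overline K(X)$ is algebraic over the quotient fields of $A$ and of $B$: since $B$ is not Pr\"ufer it is not merely a valuation domain of $\overline K$, hence it contains a nonconstant polynomial $g$, and a suitable constant multiple (chosen with large enough value in $K$) of the $K(X)$-norm of $g$ is then a nonconstant polynomial lying in $A$; as $\overline K(X)$ has transcendence degree one over $\overline K$, this gives the claim.

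Now, since $B$ is not Pr\"ufer there is a prime $Q$ of $B$ with $B_Q$ not a valuation domain, so by Theorem~\ref{criterionZS} there is a valuation overring $\mathcal U$ of $B$, centered at $Q$, whose residue field $k(\mathcal U)$ is transcendental over the quotient field $k_Q$ of $B/Q$. Put $\mathcal U_0=\mathcal U\cap\mathrm{qf}(A)$, a valuation overring of $A$. Because $A\subseteq B\subseteq\mathcal U$, the center of $\mathcal U_0$ on $A$ is $P:=Q\cap A$, and the residue homomorphism of $\mathcal U$ induces compatible inclusions $A/P\hookrightarrow B/Q$ and $k(\mathcal U_0)\hookrightarrow k(\mathcal U)$; writing $k_P$ for the quotient field of $A/P$ we thus get $k_P\subseteq k_Q\subseteq k(\mathcal U)$ and $k_P\subseteq k(\mathcal U_0)\subseteq k(\mathcal U)$. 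The key point is that $k(\mathcal U)$ is algebraic over $k(\mathcal U_0)$: the fraction field of $\mathcal U$ is contained in $\overline K(X)$, which is algebraic over $\mathrm{qf}(A)$, and the residue-field extension attached to an extension of valuations along an algebraic extension of fields is algebraic. Consequently, if $k(\mathcal U_0)$ were algebraic over $k_P$, then $k(\mathcal U)$ would be algebraic over $k_P$, a fortiori over the larger field $k_Q$, contradicting that $k(\mathcal U)$ is transcendental over $k_Q$. Hence $k(\mathcal U_0)$ is transcendental over $k_P$, and since $A$ is integrally closed, Theorem~\ref{criterionZS} yields that $A_P$ is not a valuation domain. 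Therefore $A=\overline{\mcW}\cap K[X]$ is not Pr\"ufer.

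The step I expect to require the most care is the middle bookkeeping --- verifying that the center of the restricted valuation $\mathcal U_0$ on $A$ is exactly $Q\cap A$, and that the residue-field inclusions $k_P\subseteq k_Q\subseteq k(\mathcal U)$ and $k_P\subseteq k(\mathcal U_0)$ are all induced by one and the same residue homomorphism, so that the transcendence of $k(\mathcal U)$ over $k_Q$ really does force the transcendence of $k(\mathcal U_0)$ over $k_P$. The conceptual input that makes the whole descent work is the standard fact that an algebraic extension of valued fields induces an algebraic extension of residue fields; this is what allows the obstruction to the Pr\"ufer property to be transported from $\overline K(X)$ down to $K(X)$.
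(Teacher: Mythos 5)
Your proof is correct, and it takes a genuinely different route from the paper. The paper argues by contradiction at the level of rings: if $\overline\mcW\cap K[X]$ were Pr\"ufer, its integral closure in $\overline{K}(X)$ would be Pr\"ufer (the integral closure of a Pr\"ufer domain in an algebraic extension of its quotient field is again Pr\"ufer), and this integral closure is easily seen to be contained in $\overline\mcW\cap\overline{K}[X]$; since an overring of a Pr\"ufer domain is Pr\"ufer, $\overline\mcW\cap\overline{K}[X]$ would be Pr\"ufer, a contradiction. You instead apply the Zariski--Samuel criterion (Theorem~\ref{criterionZS}) in both directions: extract from the failure of Pr\"ufer-ness of $B=\overline\mcW\cap\overline{K}[X]$ a valuation overring $\mathcal U$ with transcendental residue extension, restrict it to $\mathcal U_0$ over $A=\overline\mcW\cap K[X]$, and propagate the transcendence of the residue extension by using that residue field extensions along algebraic extensions of valued fields are algebraic. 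Both arguments ultimately rest on the same principle --- compatibility of the Pr\"ufer property with algebraic extensions --- but yours works at the level of individual valuations and residue fields, making the transcendental obstruction explicit, while the paper's is a cleaner ``global'' deduction from two standard closure properties of Pr\"ufer domains. One point in your favor: you explicitly verify that $\overline{K}(X)$ is algebraic over $\mathrm{qf}(A)$ (ruling out the degenerate case $A\subseteq K$), a detail the paper's proof uses implicitly but does not spell out.
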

\begin{proof}
If $R=\overline\mcW\cap K[X]$ is Pr\"ufer, then its integral closure $\overline{R}$ in $\overline{K}(X)$ is Pr\"ufer, because $K(X)\subseteq\overline{K}(X)$ is an algebraic extension. But it is immediate to see that $\overline{R}\subseteq \overline\mcW\cap\overline{K}[X]$: in fact, $R\subset K[X]\Rightarrow \overline{R}\subset \overline{K}[X]$ and $R\subset V=\overline\mcW\cap K(X)$ implies that $\overline{R}$ is contained in the integral closure of $V$ in $\overline K(X)$, which is contained in $\overline\mcW$. In particular, $\overline\mcW\cap\overline{K}[X]$ would be Pr\"ufer, a contradiction.
\end{proof}

The following easy result shows that if $V_{\alpha,\gamma}$, $(\alpha,\gamma)\in K\times\Gamma_v$, is a valuation overring  of $\Int(S,V)$, then $\alpha\in V$ and $\gamma\geq0$.
\begin{Lemma}\label{overrings of V[X]}
Let $(\alpha,\gamma)\in K\times\Gamma_v$. We have $V[X]\subset V_{\alpha,\gamma}$ if and only if $\alpha\in V$ and $\gamma\geq0$.
\end{Lemma}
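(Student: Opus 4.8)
The plan is to reduce the inclusion $V[X]\subset V_{\alpha,\gamma}$ to a single membership statement and then read off the answer from the defining formula of $v_{\alpha,\gamma}$. Since $V_{\alpha,\gamma}$ lies over $V$ (Definition \ref{Valphagamma}), we have $V\subseteq V_{\alpha,\gamma}$ unconditionally, and $V[X]$ is generated as a ring by $V\cup\{X\}$; hence $V[X]\subseteq V_{\alpha,\gamma}$ if and only if $X\in V_{\alpha,\gamma}$, i.e.\ $v_{\alpha,\gamma}(X)\geq 0$.

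The next step is the computation $v_{\alpha,\gamma}(X)=\min\{v(\alpha),\gamma\}$. The expansion of $X$ in powers of $X-\alpha$ is $X=\alpha+(X-\alpha)$, with constant coefficient $\alpha$ and linear coefficient $1$, so Definition \ref{Valphagamma} gives $v_{\alpha,\gamma}(X)=\min\{v(\alpha),\,v(1)+\gamma\}=\min\{v(\alpha),\gamma\}$, using the convention $v(0)=+\infty$ to cover the degenerate case $\alpha=0$. Therefore $v_{\alpha,\gamma}(X)\geq 0$ exactly when $v(\alpha)\geq 0$ and $\gamma\geq 0$, that is, when $\alpha\in V$ and $\gamma\geq 0$, which is the asserted equivalence.

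Both implications are then in hand: the forward direction is immediate, and the backward direction follows from the first paragraph, since $V\subseteq V_{\alpha,\gamma}$ together with $X\in V_{\alpha,\gamma}$ forces $V[X]\subseteq V_{\alpha,\gamma}$. If one prefers a direct check, write $f=\sum_j b_jX^j\in V[X]$, re-expand in powers of $X-\alpha$, and note that the new coefficients $a_i=\sum_{j\geq i}\binom{j}{i}b_j\alpha^{j-i}$ lie in $V$ when $\alpha\in V$, so $v(a_i)+i\gamma\geq 0$ for every $i$. Alternatively, for $\gamma\geq 0$ one may invoke Remark \ref{descriptionValphagamma}, by which $V_{\alpha,\gamma}\cap K[X]=V[(X-\alpha)/d]$ for $d\in V$ with $v(d)=\gamma$, and then $V[X]\subseteq V[(X-\alpha)/d]$ iff $X=\alpha+d\cdot\frac{X-\alpha}{d}$ lies in $V[(X-\alpha)/d]$ iff $\alpha\in V$; the case $\gamma<0$ is disposed of by exhibiting $X$ (if $\alpha\notin V$) or $X-\alpha$ (if $\alpha\in V$) as an element of $V[X]$ of negative $v_{\alpha,\gamma}$-value.

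There is no real obstacle here: the whole argument is a one-line reduction plus a direct evaluation of $v_{\alpha,\gamma}$. The only points deserving a moment's attention are that the infimum defining $v_{\alpha,\gamma}(X)$ involves only the constant and linear terms, and the bookkeeping for $\alpha=0$; and, should one read $\subset$ as strict, the properness is automatic because $V_{\alpha,\gamma}$ is a (local) valuation domain whereas $V[X]$ is not local.
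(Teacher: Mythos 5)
Your proof is correct and takes essentially the same approach as the paper's, which also reduces to the identity $v_{\alpha,\gamma}(X)=\min\{\gamma,v(\alpha)\}$; you merely spell out the elementary reduction (containment of $V[X]$ is equivalent to membership of $X$, since $V\subseteq V_{\alpha,\gamma}$ always) that the paper leaves as "immediate."
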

\begin{proof}
The statement follows immediately from the equality $v_{\alpha,\gamma}(X)=\min\{\gamma,v(\alpha)\}$.
\end{proof}

\begin{Thm}\label{criterion 1 general case}
Let $R\subseteq K[X]$ be an integrally closed domain with quotient field $K(X)$ and such that $D=R\cap K$ is a Pr\"ufer domain with quotient field $K$. Then $R$ is Pr\"ufer if and only if there is no valuation overring $V$ of $D$, an extension $\overline{W}$ of $V$ to $\overline{K}$ and $(\alpha,\gamma)\in \overline K\times\Gamma_{\overline v}$ such that $R\subset V_{\alpha,\gamma}^{\overline{W}}$.
\end{Thm}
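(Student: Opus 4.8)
The plan is to deduce both implications from the Pr\"ufer criterion of Theorem~\ref{criterionZS} together with the Alexandru--Popescu description of residually transcendental extensions (Theorem~\ref{characterization residually transcendental exts}) and the two ``non-Pr\"ufer'' Lemmas~\ref{ValphagammacapK[X] not Prufer} and~\ref{polynomial ring not Prufer}. The hypothesis that $D$ is Pr\"ufer will be needed only in the ``if'' direction. I work contrapositively in both cases.

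\emph{The ``only if'' direction:} I assume $R\subseteq V_{\alpha,\gamma}^{\overline W}$ for some valuation overring $V$ of $D$, some extension $\overline W$ of $V$ to $\overline K$, and some $(\alpha,\gamma)\in\overline K\times\Gamma_{\overline v}$, and I show $R$ is not Pr\"ufer. Since $\overline K$ is algebraically closed, the value group $\Gamma_{\overline W}$ of $\overline W$ is divisible and contains $\Gamma_v$, hence it contains $\Gamma_{\overline v}=\Gamma_v\otimes_{\Z}\Q$; in particular $\gamma\in\Gamma_{\overline W}$. Choosing $e\in\overline K^{\times}$ with $\overline v(e)=-\gamma$, a direct computation as in Remark~\ref{descriptionValphagamma} gives $\overline W_{\alpha,\gamma}\cap\overline K[X]=\overline W[e(X-\alpha)]$, a polynomial ring in one variable over the valuation domain $\overline W$ (which is not a field), hence not Pr\"ufer by the well-known fact underlying Lemma~\ref{ValphagammacapK[X] not Prufer}. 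By Lemma~\ref{polynomial ring not Prufer} applied to $\overline\mcW=\overline W_{\alpha,\gamma}$, the ring $\overline W_{\alpha,\gamma}\cap K[X]$ is then not Pr\"ufer. Finally, from $R\subseteq V_{\alpha,\gamma}^{\overline W}=\overline W_{\alpha,\gamma}\cap K(X)$ together with $R\subseteq K[X]$ we obtain $R\subseteq \overline W_{\alpha,\gamma}\cap K[X]\subseteq K(X)$, so $\overline W_{\alpha,\gamma}\cap K[X]$ is an overring of $R$; since every overring of a Pr\"ufer domain is Pr\"ufer (\cite{Gilmer}), $R$ is not Pr\"ufer. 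Note that this direction does not use that $D$ is Pr\"ufer.

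\emph{The ``if'' direction:} I assume $R$ is not Pr\"ufer and produce the forbidden configuration. As $R$ is integrally closed with quotient field $K(X)$, some localization $R_P$ fails to be a valuation domain, so by Theorem~\ref{criterionZS} there is a valuation overring $\mcW$ of $R$ with center $P$ whose residue field $L=\mcW/M_{\mcW}$ is transcendental over $\mathrm{Frac}(R/P)$. Put $V:=\mcW\cap K$, a valuation overring of $D=R\cap K$, and let $\mathfrak p:=M_V\cap D=P\cap D$ be its center on $D$. Here the hypothesis enters: since $D$ is Pr\"ufer, $V=D_{\mathfrak p}$, hence $V/M_V=\mathrm{Frac}(D/\mathfrak p)$, and the injection $D/\mathfrak p\hookrightarrow R/P$ yields $V/M_V\hookrightarrow \mathrm{Frac}(R/P)\subseteq L$. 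Since $L$ is transcendental over $\mathrm{Frac}(R/P)$, it is a fortiori transcendental over $V/M_V$, i.e.\ $\mcW$ is a residually transcendental extension of $V$. By Theorem~\ref{characterization residually transcendental exts} there exist $\alpha\in\overline K$, $\gamma\in\Gamma_{\overline v}$ and a valuation $\overline W$ of $\overline K$ lying over $V$ with $\mcW=\overline W_{\alpha,\gamma}\cap K(X)=V_{\alpha,\gamma}^{\overline W}$; since $R\subseteq\mcW$, this is exactly the configuration excluded by the hypothesis, a contradiction.

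The routine verifications — that $V=\mcW\cap K$ is a valuation overring of $D$, that $M_V\cap D=P\cap D$, that $D$ Pr\"ufer forces $V=D_{\mathfrak p}$, and that $\overline W_{\alpha,\gamma}\cap\overline K[X]$ is a one-variable polynomial ring over $\overline W$ — should be short. The one genuinely load-bearing step, and the one I expect to be the main obstacle, is the residue-field chain $V/M_V=\mathrm{Frac}(D/\mathfrak p)\subseteq\mathrm{Frac}(R/P)\subseteq L$ in the ``if'' direction: it is precisely what upgrades ``transcendental over $\mathrm{Frac}(R/P)$'' to ``residually transcendental over $V$'', and it is here — and only here — that Pr\"uferness of $D$ is used, via the equality $V=D_{\mathfrak p}$. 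Everything else is bookkeeping with Theorem~\ref{criterionZS} and the structure theorem of Alexandru and Popescu.
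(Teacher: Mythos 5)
Your proof is correct and follows essentially the same route as the paper's: both directions rest on Theorem~\ref{criterionZS}, the Alexandru--Popescu classification (Theorem~\ref{characterization residually transcendental exts}), and Lemmas~\ref{ValphagammacapK[X] not Prufer} and~\ref{polynomial ring not Prufer}, together with the fact that overrings of Pr\"ufer domains are Pr\"ufer. The residue-field chain $V/M_V=\mathrm{Frac}(D/\mathfrak p)\hookrightarrow \mathrm{Frac}(R/P)\subseteq L$ that you identify as the crux is precisely the commutative diagram the paper draws, rephrased as a chain of field embeddings.
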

The theorem is false without the assumption $R\subseteq K[X]$. For example, $R=V_{\alpha,\gamma}$, $(\alpha,\gamma)\in K\times\Gamma_v$, is a Pr\"ufer domain.
\begin{proof}
Suppose that, for some valuation overring $V$ of $D$ there exist an extension $\overline{W}$ of $V$ to $\overline{K}$ and $(\alpha,\gamma)\in \overline K\times\Gamma_{\overline v}$ such that $R\subset V_{\alpha,\gamma}^{\overline{W}}$. This last condition is equivalent to $K[X]\cap V_{\alpha,\gamma}^{\overline{W}}$. By Lemma \ref{ValphagammacapK[X] not Prufer} and \ref{polynomial ring not Prufer}, $K[X]\cap V_{\alpha,\gamma}^{\overline{W}}$ is not Pr\"ufer, which implies that $R$ is not Pr\"ufer, since an overring of a Pr\"ufer domain is Pr\"ufer.

Conversely, if $R$ is not Pr\"ufer, then by Theorem \ref{criterionZS} there exists a valuation overring $\mcW$ of $R$ with maximal ideal $M_{\mcW}$ such that $R/(M_{\mcW}\cap R)\subset \mcW/M_{\mcW}$ is a transcendental extension. Note that $\mcW\cap K=V$ is a valuation overring of $D$, and since the latter ring is Pr\"ufer, $D_P=V$, where $P$ is the center of $V$ in $D$. We claim that $\mcW$ is a residually transcendental extension of $V$, so that by Theorem \ref{characterization residually transcendental exts} we have $\mcW=V_{\alpha,\gamma}^{\overline{W}}$, for some extension $\overline{W}$ of $V$ to $\overline{K}$ and $(\alpha,\gamma)\in\overline{K}\times\Gamma_{\overline{v}}$. Indeed, we have the following diagram:
$$
\xymatrix{
&\mcW/M_{\mcW}\\
R/(M_{\mcW}\cap R)\ar[ur]&\\
&V/M_V\ar[uu]\\
D/P\ar[ur]\ar[uu]&
}
$$
By Theorem \ref{criterionZS} $D/P\subset V/M_V$ is not a transcendental extension (because $D$ is assumed to be Pr\"ufer), therefore the extension $V/M_V\subset \mcW/M_{\mcW}$ is transcendental and thus $\mcW$ is a residually transcendental extension of $V$. The proof is now complete by Theorem \ref{characterization residually transcendental exts}.
\end{proof}

\section{Pseudo-monotone sequences and polynomial closure}\label{final result}

For the next lemma, see also \cite[Lemma 4.1, Theorem 4.3, Proposition 4.5]{ChabIntValValField}.
\begin{Lemma}\label{polynomial ring and int balls}
Let $\alpha\in K$ and $\gamma\in\R$. We have
\begin{equation}\label{polynomial ring contained in int ring}
V[(X-\alpha)/\gamma]\subseteq \Int(B(\alpha,\gamma),V)
\end{equation}
In particular, if $S\subseteq V$ is such that $\Int(S,V)\subset V_{\alpha,\gamma}$, then $B(\alpha,\gamma)\subseteq \overline{S}$. The other containment of (\ref{polynomial ring contained in int ring}) holds if and only if either $V$ is not discrete or $V/M$ is infinite. In particular, if one of these last conditions holds, then $B(\alpha,\gamma)\subseteq \overline{S}$ implies $\Int(S,V)\subset V_{\alpha,\gamma}$.
\end{Lemma}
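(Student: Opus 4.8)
The plan is to prove the four claims in order; the reverse of (\ref{polynomial ring contained in int ring}) is the only one that requires work. The inclusion (\ref{polynomial ring contained in int ring}) is a one-line valuation estimate: if $f=\sum_k a_k(X-\alpha)^k\in V[(X-\alpha)/\gamma]$, so $v(a_k)+k\gamma\ge 0$ for all $k$, then for $\beta\in B(\alpha,\gamma)$ we have $v\big(a_k(\beta-\alpha)^k\big)=v(a_k)+k\,v(\beta-\alpha)\ge v(a_k)+k\gamma\ge 0$, hence $v(f(\beta))\ge 0$ and $f(\beta)\in V$. The first ``in particular'' is then immediate: since $\Int(S,V)\subseteq K[X]$, the hypothesis $\Int(S,V)\subseteq V_{\alpha,\gamma}$ yields $\Int(S,V)\subseteq V_{\alpha,\gamma}\cap K[X]=V[(X-\alpha)/\gamma]$ (Remark \ref{descriptionValphagamma}), whence $\Int(S,V)\subseteq\Int(B(\alpha,\gamma),V)$ by (\ref{polynomial ring contained in int ring}); so every $f\in\Int(S,V)$ is integer-valued on $B(\alpha,\gamma)$, which is exactly $B(\alpha,\gamma)\subseteq\overline S$.

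For the ``only if'' in the statement about the reverse containment, suppose $V$ is a DVR with finite residue field of cardinality $q$; I would fix a uniformizer $\pi$ and representatives $c_0,\dots,c_{q-1}$ of $V/M$, set $\gamma'=\lceil\gamma\rceil\in\Gamma_v$ (so $B(\alpha,\gamma)=B(\alpha,\gamma')$), choose $d$ with $v(d)=\gamma'$, and apply the $K$-algebra automorphism $X\mapsto\alpha+dY$, which carries $\Int(B(\alpha,\gamma),V)$ onto $\Int(V)$ and $V[(X-\alpha)/\gamma']$ onto $V[Y]$. The polynomial $\frac1\pi\prod_{i=0}^{q-1}(Y-c_i)$ lies in $\Int(V)\setminus V[Y]$: its leading coefficient $1/\pi$ is not in $V$, while for $y\equiv c_j\ (\mathrm{mod}\ M)$ the factors $y-c_i$ with $i\neq j$ are units, so $v\big(\prod_i(y-c_i)\big)=v(y-c_j)\ge 1=v(\pi)$ and the value is in $V$. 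Transporting this back and using $V[(X-\alpha)/\gamma]\subseteq V[(X-\alpha)/\gamma']$ (Remark \ref{Valphagamma polynomial linearly ordered}) gives an element of $\Int(B(\alpha,\gamma),V)$ outside $V[(X-\alpha)/\gamma]$.

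For the reverse containment when $V$ is non-discrete or $V/M$ is infinite, I would translate so that $\alpha=0$ and argue by contradiction: assume $f=\sum_{k=0}^n a_kX^k\in\Int(B(0,\gamma),V)\setminus V[X/\gamma]$, put $\mu:=\min_k\{v(a_k)+k\gamma\}<0$, and let $k_0$ be the least $k$ attaining $\mu$. If $V$ is non-discrete then $\Gamma_v$ is dense in $\R$, so pick $\beta\in\Gamma_v$ with $\beta>\gamma$ and $\beta-\gamma$ so small that $k_0(\beta-\gamma)$ is less than both $|\mu|$ and the gap $\min\{\,v(a_k)+k\gamma-\mu\mid v(a_k)+k\gamma>\mu\,\}$; choosing $x$ with $v(x)=\beta$ (hence $x\in B(0,\gamma)$), a comparison of the $n+1$ terms shows that $v(a_{k_0}x^{k_0})=\mu+k_0(\beta-\gamma)$ is the strict minimum of the $v(a_kx^k)$, so $v(f(x))=\mu+k_0(\beta-\gamma)<0$, contradicting $f(B(0,\gamma))\subseteq V$. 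If instead $V/M$ is infinite one may take $\gamma=v(d)\in\Gamma_v$, and the substitution $X\mapsto dY$ reduces the claim to $\Int(V)=V[X]$; the latter holds because, given $f\in\Int(V)\setminus V[X]$, scaling to a primitive $g=bf\in V[X]$ with $v(b)>0$ produces a nonzero $\overline g\in(V/M)[X]$, which over the infinite field $V/M$ has $\overline g(\overline c)\neq0$ for some $c\in V$, so $v(g(c))=0$ and $v(f(c))=-v(b)<0$, impossible.

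The last assertion is then formal: $B(\alpha,\gamma)\subseteq\overline S$ means precisely $\Int(S,V)\subseteq\Int(B(\alpha,\gamma),V)$, and under the hypothesis the reverse containment upgrades the right side to $V[(X-\alpha)/\gamma]=V_{\alpha,\gamma}\cap K[X]\subseteq V_{\alpha,\gamma}$, giving $\Int(S,V)\subseteq V_{\alpha,\gamma}$. I expect the main obstacle to be the non-discrete case of the third paragraph: one has to tune the single parameter $\beta-\gamma$ to be small relative to several quantities at once so that exactly one monomial of $f$ strictly dominates the sum and $v(f(x))$ can be computed exactly — after that, the conclusion that it is negative is forced. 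The $V/M$-infinite case, by contrast, is essentially the classical identity $\Int(V)=V[X]$ for valuation domains that are not DVRs with finite residue field.
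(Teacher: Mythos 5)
Your treatment of the containment (\ref{polynomial ring contained in int ring}), the first ``in particular,'' and the closing assertion agree with the paper's own (the paper calls them straightforward and invokes Remark \ref{descriptionValphagamma}). For the ``if and only if'' the paper simply cites Chabert's Proposition 4.5, whereas you give a self-contained proof; both your ``only if'' direction (the polynomial $\frac1\pi\prod_i(Y-c_i)$ after the linear substitution $X\mapsto\alpha+dY$) and the non-discrete case of the ``if'' direction (choosing $\beta\in\Gamma_v$ just above $\gamma$ so that a single monomial of $f$ strictly dominates the evaluation) are correct.

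There is, however, a gap in your remaining case, $V$ discrete with $V/M$ infinite: you write ``one may take $\gamma=v(d)\in\Gamma_v$,'' but when $V$ is a DVR the value group is $\Z$ and the $\gamma\in\R$ of the statement need not lie in it, and you offer no reduction. In fact the containment you are trying to establish \emph{fails} in that situation: take $V=k[[t]]$ with $k$ infinite, $\alpha=0$, $\gamma=\tfrac12$. Then $B(0,\tfrac12)=M=tV$ and $X/t\in\Int(M,V)$, yet $X/t\notin V[X/\gamma]$ because $v(1/t)+\tfrac12=-\tfrac12<0$. So the ``if'' direction, read literally for arbitrary $\gamma\in\R$, is false when $V$ is a DVR with infinite residue field and $\gamma\notin\Gamma_v$; presumably the cited proposition of Chabert carries the hypothesis $\gamma\in\Gamma_v$ (or $V$ non-discrete). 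This never causes trouble in the paper, since by Remark \ref{V not discrete of V/M infinite} any pseudo-monotone sequence in a DVR is pseudo-stationary and hence has breadth in $\Gamma_v$, so the Lemma is only invoked with $\gamma\in\Gamma_v$ when $V$ is discrete. Your argument would be complete (and the statement accurate) once this restriction is made explicit in the discrete case.
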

\begin{proof}
It is straighforward to verify the containment (\ref{polynomial ring contained in int ring}). Moreover, if $\Int(S,V)\subset V_{\alpha,\gamma}$, we have 
$$\Int(S,V)=\Int(\overline{S},V)\subseteq V_{\alpha,\gamma}\cap K[X]=V[(X-\alpha)/\gamma]$$
(the last equality follows by Remark \ref{descriptionValphagamma}), so the second claim follows. The third claim follows by \cite[Proposition 4.5]{ChabIntValValField}.
Finally, the last claim is straightforward.
\end{proof}

We remark that the last statement is false if $V$ is a DVR with finite residue field: in fact, in that case $\Int(V)$ is Pr\"ufer by Theorem \ref{IntVPrufer} so by Theorem \ref{criterion 1 general case} $\Int(V)\not\subset V_{0,0}$ (note that $V=B(0,0)$).

The following important result by Chabert  shows the connection between pseudo-monotone sequences and polynomial closure.
\begin{Prop}\cite[Prop. 4.8]{ChabPolCloVal}\label{generalized pseudo sequence and polynomial closure}
Let $S\subseteq V$ be a subset. Let $\{s_n\}_{n\in\N}$ be a pseudo-monotone sequence in $S$ with breadth $\gamma\in\R$ and pseudo-limit $\alpha\in V$. Then $B(\alpha,\gamma)\subseteq\overline{S}$, or, equivalently $\Int(S,V)\subset V_{\alpha,\gamma}$. 
\end{Prop}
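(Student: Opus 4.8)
The plan is to reduce the whole statement to proving the single inclusion $\Int(S,V)\subseteq V_{\alpha,\gamma}$. Granting this, $B(\alpha,\gamma)\subseteq\overline{S}$ follows at once from Lemma~\ref{polynomial ring and int balls}, and the two formulations are equivalent here because the existence of a pseudo-monotone sequence with breadth $\gamma\in\R$ forces $V$ to be non-discrete or $V/M$ infinite by Remark~\ref{V not discrete of V/M infinite}, which is precisely the hypothesis under which Lemma~\ref{polynomial ring and int balls} gives the converse implication. So the entire content is: every $f\in\Int(S,V)$ satisfies $v_{\alpha,\gamma}(f)\geq0$. Fix a non-zero $f$ and expand it around the pseudo-limit, $f(X)=\sum_{i=0}^{e}c_i(X-\alpha)^i$ with $c_i\in K$ (a polynomial change of variable, hence no characteristic issue); by Definition~\ref{Valphagamma}, $v_{\alpha,\gamma}(f)=\min_{0\le i\le e}\bigl(v(c_i)+i\gamma\bigr)$, while $v(f(s_n))\geq0$ for every $n$ since $s_n\in S\subseteq V$. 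I would isolate as the key claim
$$\lim_{n\to\infty}v\bigl(f(s_n)\bigr)=v_{\alpha,\gamma}(f),$$
from which $v_{\alpha,\gamma}(f)\ge0$, and hence $f\in V_{\alpha,\gamma}$, is immediate as a limit of non-negative reals. It then remains to prove the claim in the three types of pseudo-monotone sequence.

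For the pseudo-convergent and pseudo-divergent cases I would argue via the lower envelope of affine functions. In both cases the pseudo-limit condition gives $v(\alpha-s_n)=\gamma_n$, where $\gamma_n$ is a strictly monotone sequence with $\gamma_n\to\gamma$ (namely $\gamma_n=v(s_{n+1}-s_n)$ increasing to $\gamma$ in the pseudo-convergent case and $\gamma_n=v(s_n-s_{n-1})$ decreasing to $\gamma$ in the pseudo-divergent case); in particular $\gamma_n\ne\gamma$ for all $n$. Set $\ell_i(t)=v(c_i)+it$ and $E(t)=\min_{0\le i\le e}\ell_i(t)$, a continuous function. For distinct indices $i,j$ the equation $\ell_i(t)=\ell_j(t)$ has at most one solution, so the set $B$ of points at which $E(t)$ is attained by more than one index is finite; since $\gamma_n\to\gamma$ strictly monotonically, $\gamma_n\notin B$ for all large $n$. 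For such $n$ the ultrametric inequality applied to $f(s_n)=\sum_i c_i(s_n-\alpha)^i$ is an equality, $v(f(s_n))=\min_i\bigl(v(c_i)+i\gamma_n\bigr)=E(\gamma_n)$, and letting $n\to\infty$ continuity of $E$ yields $v(f(s_n))=E(\gamma_n)\to E(\gamma)=v_{\alpha,\gamma}(f)$, which is the claim.

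For the pseudo-stationary case I would instead use a residue-counting argument. Here $\gamma=v(d)\in\Gamma_v$ for some $d\in V$ and, by Remark~\ref{V not discrete of V/M infinite}, $V/M$ is infinite. Put $t_n=(s_n-\alpha)/d$; then $v(t_n)=0$ and $v(t_n-t_m)=v(s_n-s_m)-\gamma=0$ for $n\ne m$, so the residues $\overline{t_n}\in V/M$ are pairwise distinct. Writing $g(Y)=f(\alpha+dY)=\sum_i c_id^i\,Y^i$, one has $f(s_n)=g(t_n)$ and $v_{\alpha,\gamma}(f)=\min_i v(c_id^i)=:\mu\in\Gamma_v$; dividing $g$ by an element of valuation $\mu$ produces $h\in V[Y]$ whose reduction $\overline h\in(V/M)[Y]$ is non-zero of degree $\le e$, hence has at most $e$ zeros in $V/M$. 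Since the $\overline{t_n}$ are distinct, $\overline h(\overline{t_n})\ne0$ for all but finitely many $n$, i.e.\ $v(f(s_n))=\mu=v_{\alpha,\gamma}(f)$ for all but finitely many $n$, and in particular $\lim_n v(f(s_n))=v_{\alpha,\gamma}(f)$.

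The step I expect to be the main obstacle is the lower-envelope argument in the pseudo-convergent and pseudo-divergent cases: one must be sure that the ultrametric inequality $v(f(s_n))\ge\min_i\bigl(v(c_i)+i\gamma_n\bigr)$ is actually an equality for all large $n$, since only then is $v_{\alpha,\gamma}(f)$ the limit of the $v(f(s_n))$ and only then does the bound $v(f(s_n))\ge0$ propagate to $v_{\alpha,\gamma}(f)\ge0$. Because $\gamma$ need not lie in $\Gamma_v$ in these two cases (when $V$ is non-discrete), this has to be read off the continuity of $E$ together with the finiteness of the crossing set $B$, and cannot be turned into a divisibility computation as in the pseudo-stationary case. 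Everything else — the reduction via Lemma~\ref{polynomial ring and int balls} and Remark~\ref{V not discrete of V/M infinite}, the Taylor expansion around $\alpha$, and the passage from the claim to $f\in V_{\alpha,\gamma}$ and thence to $B(\alpha,\gamma)\subseteq\overline{S}$ — is routine.
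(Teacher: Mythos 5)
The paper does not prove this proposition; it is quoted from Chabert \cite[Prop.\ 4.8]{ChabPolCloVal}, and the surrounding text only explains the equivalence of the two formulations via Remark~\ref{V not discrete of V/M infinite} and Lemma~\ref{polynomial ring and int balls}, which is precisely your reduction step. Your self-contained proof is correct. The central identity $\lim_n v(f(s_n))=v_{\alpha,\gamma}(f)$ is established by a Newton-polygon (lower-envelope) argument in the pseudo-convergent and pseudo-divergent cases: strict monotonicity of $\gamma_n=v(\alpha-s_n)\to\gamma$ together with $\gamma_n\neq\gamma$ guarantees the finitely many crossing points of the affine maps $\ell_i$ are eventually avoided, so the ultrametric inequality becomes an equality and continuity of the envelope finishes it; and by a residue-counting argument in the pseudo-stationary case, where the infinitely many pairwise distinct residues $\overline{t_n}$ must eventually avoid the finitely many zeros of the nonzero reduction $\overline h$ in $V/M$. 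One small point to record explicitly: indices $i$ with $c_i=0$ give $\ell_i\equiv\infty$ and must be excluded from the envelope so that the crossing set is genuinely finite; once that is done, your argument goes through, and the deduction $v_{\alpha,\gamma}(f)\ge 0$ from $v(f(s_n))\ge 0$ and the limit identity, followed by Lemma~\ref{polynomial ring and int balls} to recover $B(\alpha,\gamma)\subseteq\overline{S}$, is exactly as you describe.
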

Note that, under the assumption of Proposition \ref{generalized pseudo sequence and polynomial closure}, by Remark \ref{V not discrete of V/M infinite} either $V$ is not discrete or its residue field is infinite, so the last equivalence of Proposition \ref{generalized pseudo sequence and polynomial closure} follows by Lemma \ref{polynomial ring and int balls}.

The aim of this section is to show that Proposition \ref{generalized pseudo sequence and polynomial closure} can be reversed, in the sense that if $B(\alpha,\gamma)$ is the largest ball centered in $\alpha\in K$ which is contained in the polynomial closure of $S$, then there exists a pseudo-monotone sequence $E$ of $S$ with pseudo-limit $\alpha$ and breadth $\gamma$.

\begin{Rem}\label{breadth not in Gamma_v}
Let $E=\{s_n\}_{n\in\N}\subset V$ be a pseudo-monotone sequence in $V$ with breadth $\gamma$ and pseudo-limit $\alpha\in V$. Suppose that $\gamma=v(d)\in \Gamma_v$, for some $d\in V$. By Proposition \ref{generalized pseudo sequence and polynomial closure} and Remark \ref{descriptionValphagamma}, we have
\begin{equation}\label{IntEVsubseteqValphagammacapK[X]}
\Int(E,V)\subseteq V\left[\frac{X-\alpha}{d}\right]
\end{equation}
If $E$ is either pseudo-stationary or pseudo-divergent, then $v(s_n-\alpha)\geq v(d)$, so the containment in (\ref{IntEVsubseteqValphagammacapK[X]}) is an equality. If $E$ is pseudo-convergent, then $v(s_n-\alpha)<\gamma$ for all $n\in\N$, so we only have the strict containment in (\ref{IntEVsubseteqValphagammacapK[X]}). 

If $E$ is pseudo-convergent or pseudo-divergent, then $\gamma$ may not be in $\Gamma_v$ and may also not be torsion over $\Gamma_v$ (in which case $V_{\alpha,\gamma}$ is not residually transcendental over $V$, by Lemma \ref{characterization residually transcendental extensions}). However, if $\gamma\notin\Gamma_v$, then there exists $\gamma'\in\Gamma_{v}$, $\gamma'>\gamma$ so that $\Int(E,V)\subseteq V_{\alpha,\gamma}\cap K[X]\subset V_{\alpha,\gamma'}\cap K[X]\subset V_{\alpha,\gamma'}$, and $V_{\alpha,\gamma'}$ is residually transcendental over $V$. Therefore, by Theorem \ref{criterion 1 general case}, $\Int(E,V)$ is not a Pr\"ufer domain.
\end{Rem}
Let $\alpha\in K$ and $\gamma\in\Gamma_v$. For the next lemma, we set
$$\partial B(\alpha,\gamma)=\{x\in K \mid v(x-\alpha)=\gamma\}$$
\begin{Lemma}\label{pol closure frontier}
Let $\alpha\in V$ and $\gamma\in \Gamma_v$. If either $V$ is not discrete or $V/M$ is infinite then
$$\overline{\partial B(\alpha,\gamma)}=B(\alpha,\gamma)$$
If $V$ is a DVR with finite residue field, then $\partial B(\alpha,\gamma)$ is polynomially closed.
\end{Lemma}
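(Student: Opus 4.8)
The plan is to establish the two assertions in turn. For the first, observe that $\partial B(\alpha,\gamma)\subseteq B(\alpha,\gamma)$ and that $B(\alpha,\gamma)$ is polynomially closed by Theorem \ref{Thm Chabert}, so the inclusion $\overline{\partial B(\alpha,\gamma)}\subseteq B(\alpha,\gamma)$ is automatic and all the work lies in the reverse inclusion $B(\alpha,\gamma)\subseteq\overline{\partial B(\alpha,\gamma)}$. The device I would use is that, fixing $d\in K$ with $v(d)=\gamma$, the open ball $\mathring{B}(\alpha+d,\gamma)$ sits inside $\partial B(\alpha,\gamma)$: if $v(x-\alpha-d)>\gamma$ then $v(x-\alpha)=\min\{v(x-\alpha-d),\gamma\}=\gamma$. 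Hence $\overline{\mathring{B}(\alpha+d,\gamma)}\subseteq\overline{\partial B(\alpha,\gamma)}$, and I would feed this into the computation of polynomial closures of open balls in Example \ref{polclosure ball}.

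At this point the hypothesis forces a case distinction. If $V$ is not discrete, then since $\gamma\in\Gamma_v$ Example \ref{polclosure ball} gives $\overline{\mathring{B}(\alpha+d,\gamma)}=B(\alpha+d,\gamma)$, and $B(\alpha+d,\gamma)=B(\alpha,\gamma)$ by Remark \ref{equality Valphagamma} because $v((\alpha+d)-\alpha)=\gamma$; this already yields $B(\alpha,\gamma)\subseteq\overline{\partial B(\alpha,\gamma)}$. If instead $V/M$ is infinite, the Example-\ref{polclosure ball} computation is of no help when $V$ happens to be discrete, so I would argue differently: the substitution $X\mapsto dX+\alpha$ is a $K$-algebra automorphism of $K[X]$ carrying $V[(X-\alpha)/\gamma]=V_{\alpha,\gamma}\cap K[X]$ onto $V[X]$ and identifying $\Int(\partial B(\alpha,\gamma),V)$ with $\Int(V\setminus M,V)$, and $\Int(V\setminus M,V)=V[X]$ because any $f$ of degree $n$ that is $V$-valued on $V\setminus M$ is recovered by Lagrange interpolation from its values at $n+1$ units with pairwise distinct residues modulo $M$ (available since $V/M$ is infinite), all the differences of which are then units. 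Combining this with $\Int(B(\alpha,\gamma),V)=V[(X-\alpha)/\gamma]$ from Lemma \ref{polynomial ring and int balls} gives $\Int(\partial B(\alpha,\gamma),V)=\Int(B(\alpha,\gamma),V)$, hence $\overline{\partial B(\alpha,\gamma)}=\overline{B(\alpha,\gamma)}=B(\alpha,\gamma)$; alternatively, when $\gamma\geq 0$ one may instead exhibit a pseudo-stationary sequence $\{\alpha+dc_n\}_{n\in\N}\subseteq\partial B(\alpha,\gamma)$ (with $c_n\in V\setminus M$ of pairwise distinct residues) of breadth $\gamma$ and pseudo-limit $\alpha$, and invoke Proposition \ref{generalized pseudo sequence and polynomial closure}.

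For the second assertion, suppose $V$ is a DVR with finite residue field and let $\epsilon$ be the least positive element of $\Gamma_v$, so that $\gamma+\epsilon\in\Gamma_v$. Writing $x=\alpha+dy$ identifies $\partial B(\alpha,\gamma)$ with $\alpha+d(V\setminus M)$; choosing a finite set $R\subseteq V$ of representatives of $V/M$ with $0\in R$, the partition $V\setminus M=\bigsqcup_{c\in R\setminus\{0\}}(c+M)$ translates into $\partial B(\alpha,\gamma)=\bigsqcup_{c\in R\setminus\{0\}}(\alpha+dc+dM)$, and since $dM=\{z\in K\mid v(z)>\gamma\}=\{z\in K\mid v(z)\geq\gamma+\epsilon\}$ each summand is the closed ball $B(\alpha+dc,\gamma+\epsilon)$. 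Thus $\partial B(\alpha,\gamma)$ is a finite union of closed balls with radius in $\Gamma_v$, hence polynomially closed by Theorem \ref{Thm Chabert}. The step I expect to be the main obstacle is precisely the reverse inclusion of the first assertion: one has to notice that the clean open-ball computation of Example \ref{polclosure ball} settles only the non-discrete case, and that the discrete case with infinite residue field genuinely requires the interpolation argument (or the pseudo-stationary sequence); once the right tool is chosen for each case, the remaining verifications — the inclusion $\mathring{B}(\alpha+d,\gamma)\subseteq\partial B(\alpha,\gamma)$ and the decomposition of $\partial B(\alpha,\gamma)$ into finitely many balls — are routine.
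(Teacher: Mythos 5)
Your proof is correct, and it takes a route that differs from the paper's in two of the sub-cases, so a comparison is worth recording. Both you and the paper begin with the same affine reduction (translate by $\alpha$ and scale by $d$, so that $\partial B(\alpha,\gamma)$ becomes $V\setminus M$ and $B(\alpha,\gamma)$ becomes $V$), and both handle the DVR-with-finite-residue-field case identically, by writing $\partial B(\alpha,\gamma)$ as a finite union of closed balls and invoking Theorem~\ref{Thm Chabert}. Where you diverge is in the first assertion. The paper case-splits on $V/M$ infinite versus finite: for infinite residue field it produces a pseudo-stationary sequence in $V\setminus M$ with a unit pseudo-limit and applies Proposition~\ref{generalized pseudo sequence and polynomial closure}; for finite residue field it writes $V\setminus M$ as a finite union of cosets $a_i+M$ and uses $\overline{a_i+M}=a_i+\overline{M}$ together with $\overline{M}=V$ when $V$ is non-discrete (Example~\ref{polclosure ball}). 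You instead split on ``$V$ non-discrete'' versus ``$V/M$ infinite'': for the non-discrete case you feed the single open ball $\mathring{B}(\alpha+d,\gamma)\subseteq\partial B(\alpha,\gamma)$ into Example~\ref{polclosure ball} and use $B(\alpha+d,\gamma)=B(\alpha,\gamma)$ (Remark~\ref{equality Valphagamma}), which bypasses the residue field entirely and is arguably cleaner; for the infinite-residue-field case you show $\Int(V\setminus M,V)=V[X]=\Int(V,V)$ by Lagrange interpolation at units with distinct residues, which is a self-contained elementary substitute for the pseudo-stationary-sequence argument (and you correctly note that argument, which is the paper's choice, as an alternative). All steps check: the monotonicity of polynomial closure, the equality of closures from the equality of $\Int$ rings, and the identity $dM=\{z: v(z)\geq\gamma+\epsilon\}$ in the discrete case are all used correctly.
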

\begin{proof}
Let $d\in V$ be such that $v(d)=\gamma$. Note that under the isomorphism $X\mapsto\frac{X-\alpha}{d}$, $\partial B(\alpha,\gamma)=\{x\in K \mid v(x-\alpha)=\gamma\}$ is isomorphic to $V^*=V\setminus M$ and similarly $B(\alpha,\gamma)$ is isomorphic to $V$. Therefore, $\overline{\partial B(\alpha,\gamma)}=B(\alpha,\gamma)$ if and only if $\overline{V^*}=V$. We prove now the last equality under the current hypothesis. 

If $V/M$ is infinite, then there exists a sequence $E=\{s_n\}_{n\in\N}\subset V$ and an element $s\in V^*$  such that $v(s_n-s_m)=v(s_n-s)=0$, $\forall n\not=m$. Thus, $E$ is pseudo-stationary with pseudo-limit $s$ and by Proposition \ref{generalized pseudo sequence and polynomial closure} we may conclude.

If $V/M$ is finite, let $V^*=\bigcup_{i=1,\ldots,n}(a_i+M)$, where $a_i\notin M$, $\forall i=1,\ldots,n$. Since the polynomial closure in this context is a topological closure by Theorem \ref{Thm Chabert}, we have $\overline{V^*}=\bigcup_{i=1,\ldots,n}(\overline{a_i+M})=\bigcup_{i=1,\ldots,n}(a_i+\overline{M})$ by \cite[Proposition IV.1.5, p. 75]{CaCh}. The polynomial closure of $M$ is equal either to $V$ if $V$ is not discrete or to $M$ itself if $V$ is discrete (see Example \ref{polclosure ball}). The proof is complete.
\end{proof}

For a subset $S$ of $V$ such that $\Int(S,V)$ is not Pr\"ufer, we know by Theorem \ref{criterion 1 general case} that there exist an extension $\overline{W}$ of $V$ to $\overline{K}$ and $(\alpha,\gamma)\in\overline{K}\times \Gamma_{\overline{v}}$, $\Gamma_{\overline{v}}=\Gamma_v\otimes_{\Z}\Q$, such that $\Int(S,V)\subset V_{\alpha,\gamma}^{\overline{W}}$. The next two propositions show that it is sufficient to consider the case $(\alpha,\gamma)\in V\times\Gamma_v$.

\begin{Prop}\label{integral extension int rings}
Let $S$ be a subset of an integrally closed domain $D$ with quotient field $K$. Let $F$ be an algebraic extension of $K$ and $D_F$ the integral closure of $D$ in $F$. Then the integral closure of $\Int(S,D)$ in $F(X)$ is the ring $\Int(S,D_F)$. 
\end{Prop}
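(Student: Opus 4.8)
The plan is to set $\widetilde R$ equal to the integral closure of $\Int(S,D)$ in $F(X)$ — this is meaningful since $D[X]\subseteq\Int(S,D)\subseteq K[X]$ forces $\operatorname{Frac}(\Int(S,D))=K(X)$, and $F(X)/K(X)$ is algebraic — and to prove $\widetilde R=\Int(S,D_F)$ by two inclusions. For $\widetilde R\subseteq\Int(S,D_F)$: the integral closure of $K[X]$ in $F(X)$ is $F[X]$ (each element of the field $F$ is algebraic, hence integral, over $K$, so $F[X]$ is integral over $K[X]$, and $F[X]$ is integrally closed, being a UFD), so already $\widetilde R\subseteq F[X]$; and if $\varphi\in\widetilde R$ satisfies $\varphi^n+g_{n-1}\varphi^{n-1}+\dots+g_0=0$ with $g_i\in\Int(S,D)$, then evaluating this polynomial identity at an arbitrary $s\in S$ exhibits $\varphi(s)\in F$ as integral over $D$, so $\varphi(s)\in D_F$ and $\varphi\in\Int(S,D_F)$.

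For the reverse inclusion I must show every $f\in\Int(S,D_F)$ is integral over $\Int(S,D)$. First I would reduce to $[F:K]<\infty$: the subfield $F_0:=K(\text{coefficients of }f)$ is finite over $K$, $D_F\cap F_0$ is the integral closure of $D$ in $F_0$, and $f\in\Int(S,D_F\cap F_0)$, so it suffices to treat $F_0$ in place of $F$. Assuming $F/K$ finite, I would fix a finite normal extension $N\supseteq F$ of $K$ and put $P=N^{\operatorname{Aut}(N/K)}$, so that $N/P$ is Galois and $P/K$ is purely inseparable (with $P=K$ in characteristic $0$); since $\Int(S,D_F)\subseteq\Int(S,D_N)$, by transitivity of integral dependence it is enough to show that $\Int(S,D_N)$ is integral over $\Int(S,D_P)$ and that $\Int(S,D_P)$ is integral over $\Int(S,D)$.

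For the first, given $g\in\Int(S,D_N)$ and $\sigma\in\operatorname{Gal}(N/P)$ one has $g^\sigma(s)=\sigma(g(s))$ for $s\in S$ (as $\sigma$ fixes $K\supseteq D\supseteq S$) and $\sigma$ preserves $D_N$, so $g^\sigma\in\Int(S,D_N)$; hence the coefficients of $h(T):=\prod_{\sigma\in\operatorname{Gal}(N/P)}(T-g^\sigma)$, being $\operatorname{Gal}(N/P)$-invariant elements of $N[X]$ that are polynomial expressions with integer coefficients in the $g^\sigma$, lie in $P[X]\cap\Int(S,D_N)=\Int(S,D_P)$, and $h$ is a monic equation satisfied by $g$. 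For the second, suppose $\operatorname{char}=p>0$ and $P^{p^e}\subseteq K$; for $g\in\Int(S,D_P)$ the Frobenius identity gives $g^{p^e}\in K[X]$, while $g^{p^e}(s)=g(s)^{p^e}$ is integral over $D$ and lies in $K$, hence in $D$; so $g^{p^e}\in\Int(S,D)$ and $g$ is a root of the monic polynomial $T^{p^e}-g^{p^e}$ over $\Int(S,D)$ (in characteristic $0$ this step is vacuous, as $P=K$).

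The routine verifications — that $N(X)/P(X)$ is Galois with fixed field $P(X)$, that $P(X)\cap N[X]=P[X]$, that $D_N\cap P=D_P$, and that $\sigma$ maps $D_N$ into itself — I would dispatch in a line or two. I expect the only real subtlety to be the inseparable behaviour: symmetrizing over $\operatorname{Aut}(N/K)$ when $N/K$ is merely normal produces coefficients only in the purely inseparable subfield $P$, not in $K$, which is precisely why the argument has to split the normal closure into its Galois and purely inseparable parts and why the Frobenius descent in the last step is genuinely necessary.
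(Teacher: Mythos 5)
Your proof is correct. Both you and the paper prove the nontrivial inclusion (that $\Int(S,D_F)$ is integral over $\Int(S,D)$) by symmetrizing over field automorphisms, but the paper goes more directly: after enlarging $F$ to a normal extension of $K$ (e.g.\ $\overline K$), it takes a monic polynomial $\Phi$ over $K[X]$ annihilating $f$ and observes that its coefficients $g_i\in K[X]$ are elementary symmetric functions of the conjugates $\sigma(f)$, $\sigma\in{\rm Gal}(F/K)$; evaluating at $s\in S$ then gives $g_i(s)\in D_F\cap K=D$. Read literally this glosses over inseparability --- when $f$ is inseparable over $K(X)$ the $g_i$ are symmetric functions of the conjugates counted with their inseparability multiplicity, not of the distinct conjugates --- but the values still land in $D_F$, so the argument is sound. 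Your version makes this explicit by first reducing to $[F:K]<\infty$, then splitting the normal closure $N/K$ into its Galois part $N/P$ (symmetrize to get an integral equation over $\Int(S,D_P)$) and its purely inseparable part $P/K$ (Frobenius descent $g\mapsto g^{p^e}$). This two-stage decomposition is not in the paper; it buys transparency about exactly where the inseparable phenomenon enters, at the cost of a bit more bookkeeping. For the easy inclusion, the paper cites \cite[Proposition IV.4.1]{CaCh} for the fact that $\Int(S,D_F)$ is integrally closed, whereas you argue directly by evaluating the integral equation at $s\in S$; both are fine.
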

\begin{proof}
It is well-known that $\Int(S,D_F)\subset F[X]$ is integrally closed (see  \cite[Proposition IV.4.1]{CaCh}), so we have just to show that every element of $\Int(S,D_F)$ is integral over $\Int(S,D)$. Up to enlarging the field $F$, we may assume that $F$ is normal over $K$ (e.g., the algebraic closure of $K$). We are going to show that $\Int(S,D)\subset\Int(S,D_F)$ is an integral ring extension under this further assumption. Let then $f\in \Int(S,D_F)$, since $f\in F[X]$, we know that $f$ satisfies a monic equation over the polynomial ring $K[X]$:
$$f^n+g_{n-1}f^{n-1}+\ldots+g_1f+g_0=0,\;\;g_i\in K[X],\;i=0,\ldots,n-1.$$
We claim that $g_i\in\Int(S,D)$, for $i=0,\ldots,n-1$. Let $\Phi(T)=T^n+g_{n-1}T^{n-1}+\ldots+g_0\in K[X][T]$. The roots of $\Phi(T)$ are exactly the conjugates of $f$ under the action of the Galois group ${\rm Gal}(F/K)$, which acts on the coefficients of the polynomial $f$. If $\sigma\in{\rm Gal}(F/K)$, then $\sigma(f)\in F[X]$, and, more precisely, $\sigma(f)\in \Int(S,D_F)$. In fact, for each $s\in S\subset K$, since $\sigma$ leaves each element of $K$ invariant, we have $\sigma(f)(s)=\sigma(f(s))$ which still is an element of $D_F$ (which likewise is left invariant under the action of ${\rm Gal}(F/K)$). Now, since each coefficient $g_i(X)$ of $\Phi(T)$ lies in $K[X]$ and is an elementary symmetric function on the elements $\sigma(f), \sigma\in{\rm Gal}(F/K)$, we have that $g_i(s)\in D_F\cap K=D$, for each $s\in S$, thus $g_i\in\Int(S,D)$, as claimed.
\end{proof}
\begin{Prop}\label{IntSVIntSW}
Let $(\alpha,\gamma)\in F\times\Gamma_{w}$, where $F$ is a finite field extension of $K$ and $W$ is a valuation domain of $F$ lying over $V$. If $S$ is a subset of $V$ such that $\Int(S,V)\subset W_{\alpha,\gamma}\cap K(X)$, then $\Int(S,W)\subset W_{\alpha,\gamma}$.
\end{Prop}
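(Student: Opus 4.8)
The plan is to deduce the inclusion over $W$ from the hypothesized inclusion over $V$ by passing through the integral closure $D_F$ of $V$ in $F$. Since $F/K$ is finite, $D_F$ is a semilocal Pr\"ufer domain with $D_F=\bigcap_{i=1}^g W_i$, where $W_1,\dots,W_g$ are the (finitely many) extensions of $V$ to $F$, say $W=W_1$, and with maximal ideals $\mathfrak m_i$ satisfying $(D_F)_{\mathfrak m_i}=W_i$. Two facts set up the reduction. First, directly from the definition of integer-valued polynomials and $D_F=\bigcap_i W_i$, one has $\Int(S,D_F)=\bigcap_{i=1}^g\Int(S,W_i)$. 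Second, by Proposition~\ref{integral extension int rings} the integral closure of $\Int(S,V)$ in $F(X)$ is exactly $\Int(S,D_F)$; since $W_{\alpha,\gamma}$ is a valuation domain of $F(X)$, hence integrally closed in $F(X)$, and contains $\Int(S,V)$ (because $\Int(S,V)\subseteq W_{\alpha,\gamma}\cap K(X)$ by hypothesis), it follows that $\Int(S,D_F)\subseteq W_{\alpha,\gamma}$.

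The heart of the argument is to show that any $f\in\Int(S,W)$ can be pushed into $\Int(S,D_F)$ by multiplying by a suitable nonzero constant of $F$. By prime avoidance choose $t\in D_F$ with $t\notin\mathfrak m_1$ but $t\in\mathfrak m_i$ for all $i\geq 2$ (take $t=1$ if $g=1$); then $w_1(t)=0$ and $w_i(t)>0$ for $i\geq 2$. Writing $f(X)=\sum_j a_jX^j\in F[X]$ and using that every $s\in S\subseteq V$ satisfies $w_i(s)\geq 0$, we get the uniform estimate $w_i(f(s))\geq\min_j w_i(a_j)=:c_i$ for all $s\in S$. Pick $N\in\N$ with $Nw_i(t)\geq -c_i$ for every $i\geq 2$. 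Then for all $s\in S$ we have $w_i(f(s)t^N)=w_i(f(s))+Nw_i(t)\geq 0$ when $i\geq 2$, and $w_1(f(s)t^N)=w_1(f(s))\geq 0$ since $f\in\Int(S,W_1)$. Hence $f(s)t^N\in\bigcap_i W_i=D_F$ for every $s\in S$, i.e. $ft^N\in\Int(S,D_F)\subseteq W_{\alpha,\gamma}$. Finally $t$ is a constant polynomial, so $w_{\alpha,\gamma}(t)=w(t)=w_1(t)=0$, whence $w_{\alpha,\gamma}(f)=w_{\alpha,\gamma}(ft^N)\geq 0$, that is $f\in W_{\alpha,\gamma}$. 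As $f$ was arbitrary, $\Int(S,W)\subseteq W_{\alpha,\gamma}$.

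The only point that needs care is the uniform lower bound $w_i(f(s))\geq c_i$: it is what allows a single exponent $N$ to clear all the places $W_2,\dots,W_g$ simultaneously, for every $s\in S$ at once, and it genuinely uses the hypothesis $S\subseteq V$ (so that $w_i(s)\geq 0$ and the coefficients of $f$ control $w_i(f(s))$ from below). Everything else --- the structure of $D_F$, the identification of $\Int(S,D_F)$ with $\bigcap_i\Int(S,W_i)$ and with the integral closure of $\Int(S,V)$ in $F(X)$, and the behaviour of $w_{\alpha,\gamma}$ on constants --- is standard bookkeeping.
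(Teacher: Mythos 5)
Your proof is correct and follows essentially the same route as the paper's: pass through the integral closure $D_F$ of $V$ in $F$, use Proposition~\ref{integral extension int rings} to get $\Int(S,D_F)\subseteq W_{\alpha,\gamma}$, and then clear the other places $W_2,\dots,W_g$ by multiplying $f\in\Int(S,W)$ by a constant that is a unit in $W$. The only cosmetic difference is that the paper produces the clearing element via the approximation theorem for independent valuations (and phrases the reduction through the localization $T^{-1}\Int(S,V_F)=\Int(S,W)$), while you obtain it by prime avoidance followed by raising to a power $N$ chosen from the uniform bound $w_i(f(s))\geq\min_j w_i(a_j)$.
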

Note that the polynomials in $\Int(S,V)$ have coefficients in $K$, the quotient field of $V$, while the polynomials in $\Int(S,W)$ have coefficients in $F$, the quotient field of $W$.
\begin{proof}
Let $S$ be a subset of $V$ such that $\Int(S,V)\subset W_{\alpha,\gamma}\cap K(X)=V_{\alpha,\gamma}$. The integral closure $V_F$ of $V$ in $F$ is equal to an intersection of finitely many rank 1 valuation domains $W=W_1,\ldots,W_n$ of $F$ lying over $V$. In particular,
$$\Int(S,V_F)=\bigcap_{i=1,\ldots,n}\Int(S,W_i)$$
Let $M_W$ be the maximal ideal of $W$. If $T=V_F\setminus (M_W\cap V_F)$, then $T^{-1}V_F=W$ and since localization commutes with finite intersections we have:
$$T^{-1}\Int(S,V_F)=\bigcap_{i=1,\ldots,n}T^{-1}\Int(S,W_i)$$
Let $f\in K[X]$, say $f(X)=\frac{g(X)}{d}$, for some $g\in V_F[X]$ and $d\in V_F$. Let $\gamma_i=w_i(d)$, for each $i\geq 2$. By the approximation theorem for independent valuations (\cite[Corollaire 1, Chapt. VI, \S 7]{Bourb}), there exists $t\in V_F$ such that $w(t)=0$ and $w_i(t)=\gamma_i$. In particular, $t\in T$. Then $t\cdot f\in \Int(S,W_i)$, so that $T^{-1}\Int(S,W_i)=K[X]$ for all $i\geq 2$. Clearly, $T^{-1}\Int(S,W)=\Int(S,W)$. Hence, 
\begin{equation}\label{localization}
T^{-1}\Int(S,V_F)=\Int(S,W)
\end{equation}
Since $\Int(S,V_F)$ is the integral closure of $\Int(S,V)$ in $F(X)$ by Proposition \ref{integral extension int rings} and $V_{\alpha,\gamma}=W_{\alpha,\gamma}\cap K(X)$ is an overring of $\Int(S,V)$, it follows that $W_{\alpha,\gamma}$ is an overring of $\Int(S,V_F)$. 

Let now $f\in \Int(S,W)$. By (\ref{localization}) there exists $d\in T$ such that $d\cdot f\in \Int(S,V_F)\subset W_{\alpha,\gamma}$. Since $d\in W^*$ is also a unit in $W_{\alpha,\gamma}$, it follows that $f\in W_{\alpha,\gamma}$. This shows that $\Int(S,W)\subset W_{\alpha,\gamma}$, as wanted. Note that, since $S\subseteq V\subset W$, by Lemma \ref{overrings of V[X]}, $\alpha\in W$ and $\gamma\geq0$.
\end{proof}

For a subset $S$ of $V$, $\alpha\in K$ and $\gamma\in\R$, we set 
\begin{align*}
S_{\alpha,<\gamma}=&\{s\in S \mid v(s-\alpha)<\gamma\}\\
S_{\alpha,>\gamma}=&\{s\in S \mid v(s-\alpha)>\gamma\}\\
S_{\alpha,\gamma}=&\{s\in S \mid v(s-\alpha)=\gamma\}
\end{align*}
Note that if $S_{\alpha,\gamma}$ is not empty then $\gamma\in\Gamma_v$.
\begin{Prop}\label{polynomial closure Salpha=gamma}
Let $S\subseteq V$, $\alpha\in V$ and $\gamma\in\Gamma_v$. Then $B(\alpha,\gamma)\subseteq \overline{S_{\alpha,\gamma}}$ if and only if there exists a pseudo-monotone sequence $E=\{s_n\}_{n\in\N}$ in $S_{\alpha,\gamma}$ with breadth $\gamma$ such that $E$ is either pseudo-stationary and with pseudo-limit $\alpha$ or pseudo-divergent and with a  pseudo-limit in $S_{\alpha,\gamma}$. 

In particular, if $V$ is a DVR, then $E$ can only be pseudo-stationary.
\end{Prop}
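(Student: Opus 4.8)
The plan is to prove the two implications separately; the reverse (``only if'') direction is the substantial one, and there I would first normalize $(\alpha,\gamma)$ to $(0,0)$ and then split according to how $S$ spreads over the residue classes of $V$. For the ``if'' direction, suppose such a pseudo-monotone sequence $E$ exists. If $E$ is pseudo-stationary of breadth $\gamma$ with pseudo-limit $\alpha$, then Proposition \ref{generalized pseudo sequence and polynomial closure}, applied to the subset $S_{\alpha,\gamma}$ (which contains $E$), gives $B(\alpha,\gamma)\subseteq\overline{S_{\alpha,\gamma}}$. If instead $E$ is pseudo-divergent of breadth $\gamma$ with pseudo-limit $\beta\in S_{\alpha,\gamma}$, the same proposition yields $B(\beta,\gamma)\subseteq\overline{S_{\alpha,\gamma}}$; since $\beta\in S_{\alpha,\gamma}$ means $v(\beta-\alpha)=\gamma$, we have $B(\beta,\gamma)=B(\alpha,\gamma)$ by Remark \ref{equality Valphagamma}, and we are done.

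For the ``only if'' direction, pick $d\in V$ with $v(d)=\gamma$ and apply the affine substitution $\sigma\colon x\mapsto(x-\alpha)/d$. Being induced by a $K$-algebra automorphism of $K[X]$, $\sigma$ satisfies $\overline{\sigma(T)}=\sigma(\overline T)$ for every $T\subseteq K$, so it is a homeomorphism for the polynomial topology; it carries $S_{\alpha,\gamma}$ onto $S^{*}:=\sigma(S)\cap(V\setminus M)$, the ball $B(\alpha,\gamma)$ onto $B(0,0)=V$, and pseudo-stationary (resp. pseudo-divergent) sequences of breadth $\gamma$, together with their pseudo-limits, onto ones of breadth $0$. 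Hence we may assume $\alpha=0$ and $\gamma=0$, so that the hypothesis reads $0\in V=B(0,0)\subseteq\overline{S^{*}}$. If $S^{*}$ meets infinitely many residue classes of $V$ modulo $M$, choose $s_n\in S^{*}$ ($n\in\N$) lying in pairwise distinct classes: then $v(s_n-s_m)=0$ for all $n\neq m$, so $\{s_n\}$ is pseudo-stationary of breadth $0$ and $v(0-s_n)=v(s_n)=0$ exhibits $0$ as a pseudo-limit; undoing $\sigma$ produces a pseudo-stationary sequence in $S_{\alpha,\gamma}$ of breadth $\gamma$ with pseudo-limit $\alpha$.

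Suppose instead $S^{*}$ meets only finitely many classes $a_1+M,\dots,a_k+M$ with $a_i\in V\setminus M$, and put $S^{*}_i=S^{*}\cap(a_i+M)$, so that $\overline{S^{*}}=\bigcup_{i=1}^{k}\overline{S^{*}_i}$ because polynomial closure is a topological closure (Theorem \ref{Thm Chabert}). I would first observe that $V$ must be non-discrete: otherwise $\overline{M}=M$ (Example \ref{polclosure ball}) forces $\overline{S^{*}_i}\subseteq a_i+M$ for each $i$, whence $0\notin\overline{S^{*}}$, a contradiction --- and this also yields the last assertion of the Proposition, since over a DVR only the previous case can occur. Next I claim $\varepsilon_i:=\inf\{v(s-a_i)\mid s\in S^{*}_i\}=0$ for some $i$: one has $\varepsilon_i\geq 0$ always, and if $\varepsilon_i>0$ for every $i$ then $S^{*}_i$ is contained in a ball about $a_i$ of positive radius $\varepsilon_i$, whose polynomial closure --- computed by Theorem \ref{Thm Chabert} when $\varepsilon_i\in\Gamma_v$ and by Example \ref{polclosure ball} when $\varepsilon_i\notin\Gamma_v$ --- is still a ball about $a_i$ of positive radius and hence misses $0$, forcing $0\notin\bigcup_i\overline{S^{*}_i}=\overline{S^{*}}$, a contradiction. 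Fixing $i$ with $\varepsilon_i=0$ and using the non-discreteness of $V$, choose $s_n\in S^{*}_i$ with $v(s_n-a_i)$ strictly decreasing to $0$; with $t_n:=s_n-a_i\in M$ one computes $v(s_{n+1}-s_n)=v(t_{n+1})$, strictly decreasing to $0$, so $\{s_n\}$ is pseudo-divergent of breadth $0$, and by \S\ref{Pds} a suitable term of the sequence is itself a pseudo-limit, which lies in $S^{*}_i$. Undoing $\sigma$ gives a pseudo-divergent sequence in $S_{\alpha,\gamma}$ of breadth $\gamma$ with a pseudo-limit in $S_{\alpha,\gamma}$.

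I expect the main obstacle to be this last case: deducing from ``$0\in\overline{S^{*}}$'' that some residue class of $V$ contains points of $S$ arbitrarily close to its frontier rests on the precise form of the polynomial closure of a ball, and one must be careful that an index $\varepsilon_i\notin\Gamma_v$ does not cause $\overline{B(a_i,\varepsilon_i)}$ to swell all the way to $B(a_i,0)=V$ --- which is precisely where the non-discreteness of $V$, deduced one step earlier, is used. By contrast, the normalization via $\sigma$, the pseudo-stationary case, and the verification that the two extracted sequences have the asserted breadths and pseudo-limits are routine.
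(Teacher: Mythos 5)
Your proof is correct, but it takes a genuinely different route from the paper's. The paper works directly in the coordinates $(\alpha,\gamma)$: it first notes that the hypothesis forces $V$ non-discrete or $V/M$ infinite (via Lemma~\ref{pol closure frontier}), and then argues by dichotomy on whether a pseudo-divergent sequence of breadth $\gamma$ with pseudo-limit in $S_{\alpha,\gamma}$ already exists. If not, it sets $\gamma_s=\inf\{v(s-s')\mid s'\in S_{\alpha,\gamma},\,v(s'-s)>\gamma\}>\gamma$ for each $s$, and inductively builds a pseudo-stationary sequence with pseudo-limit $\alpha$ by using Remark~\ref{open sets polyn topology}: the set $U_n=\{x\mid v(x-s_i)<\gamma_{s_i},\,i\le n\}$ is open in the polynomial topology, contains $\alpha\in\overline{S_{\alpha,\gamma}}$, and hence meets $S_{\alpha,\gamma}$, and any such new point $s_{n+1}$ is forced to satisfy $v(s_{n+1}-s_i)=\gamma$. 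You instead normalize by an affine automorphism to $(\alpha,\gamma)=(0,0)$ (a legitimate reduction, since polynomial closure commutes with such substitutions) and split according to whether $S^*$ meets infinitely or finitely many residue classes of $V/M$, producing a pseudo-stationary sequence in the first case and a pseudo-divergent one (via $\varepsilon_i=0$ for some $i$) in the second. Both approaches give the stated disjunction; yours trades the paper's direct use of the polynomial topology's basis of open sets for the more concrete residue-class structure plus the explicit formula for $\overline{\mathring B(a_i,\varepsilon_i)}$ from Example~\ref{polclosure ball}, and it makes the non-discreteness conclusion emerge locally (as the only way $\varepsilon_i=0$ can happen) rather than up front via Lemma~\ref{pol closure frontier}. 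One small point of economy: the paper dispatches the final DVR clause simply by citing \S\ref{Pds} (no pseudo-divergent sequences exist in a discrete $V$), whereas you re-derive it from your residue-class dichotomy; both are valid, but the direct citation is shorter.
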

\begin{proof}

The 'if' part follows from Proposition \ref{generalized pseudo sequence and polynomial closure}.

Conversely, suppose that $B(\alpha,\gamma)\subseteq \overline{S_{\alpha,\gamma}}$. Note that this assumption necessarily implies that either $V$ is non-discrete or $V/M$ is infinite. In fact, $\overline{S_{\alpha,\gamma}}\subseteq\overline{\partial B(\alpha,\gamma)}$ and $\partial B(\alpha,\gamma)$ is polynomially closed if $V$ is a DVR with finite residue field (Lemma \ref{pol closure frontier}), so in this case $\overline{S_{\alpha,\gamma}}$ could not contain $B(\alpha,\gamma)$. 

Suppose that there is no pseudo-divergent sequence $E=\{s_n\}_{n\in\N}$ in $S_{\alpha,\gamma}$ with breadth $\gamma$ and with pseudo-limit $s\in S_{\alpha,\gamma}$. This is equivalent to the following: for each $s\in S_{\alpha,\gamma}$, let $\gamma_s=\inf\{v(s-s')\mid s'\in S_{\alpha,\gamma},v(s'-s)>\gamma\}$. Then $\gamma_s>\gamma$, for each $s\in S_{\alpha,\gamma}$ (note that, a priori, for $s\not=s'\in S_{\alpha,\gamma}$ we have $v(s-s')\geq\gamma$). We construct now a pseudo-stationary sequence in $S_{\alpha,\gamma}$ with pseudo-limit $\alpha$ and breadth $\gamma$. Let $s_1\in S_{\alpha,\gamma}$. Then $\alpha$ belongs to the set $U_1=\{x\in K \mid v(x-s_1)<\gamma_{s_1}\}$, which is open in the polynomial topology by Remark \ref{open sets polyn topology}, and since $\alpha\in\overline{S_{\alpha,\gamma}}$ by assumption, there exists $s_2\in S_{\alpha,\gamma}\cap \{x\in K \mid v(x-s_1)<\gamma_{s_1}\}$. By definition of $\gamma_{s_1}$, we must have $v(s_1-s_2)=\gamma$. Now, we consider the open set $U_2=\{x\in K \mid v(x-s_i)<\gamma_{s_i},i=1,2\}$. Since $\alpha\in U_2$ there exists $s_3\in U_2\cap S_{\alpha,\gamma}$, so that $v(s_3-s_i)<\gamma_{s_i}$, for $i=1,2$, which implies that $v(s_3-s_i)=\gamma$, for $i=1,2$. If we continue in this way, we get a pseudo-stationary sequence $E=\{s_n\}_{n\in\N}\subseteq S_{\alpha,\gamma}$ with  pseudo-limit $\alpha$ and breadth $\gamma$, as wanted.

The last claim follows immediately from \S \ref{Pds}.
\end{proof}

\vskip0.5cm
In the next two results, we consider an integral domain $R\subset K[X]$ with quotient field $K(X)$  such that $R\subset V_{\alpha,\gamma'}$, for some $(\alpha,\gamma')\in V\times\Gamma_v$ (in particular, $R$ is not Pr\"ufer by Theorem \ref{criterion 1 general case}). If $\alpha$ is fixed, then by Remark \ref{Valphagamma polynomial linearly ordered} the set of rings $\{K[X]\cap V_{\alpha,\gamma'} \mid \gamma'\in\Gamma_v\}$ is linearly ordered. In the following we consider the infimum in $\R$ of the set $\{\gamma'\in\Gamma_v \mid R\subset V_{\alpha,\gamma'}\}$.

\begin{Lemma}\label{limit gamma'}
Let $R\subset K[X]$ be an integral domain with quotient field $K(X)$. Suppose $R\subset V_{\alpha,\gamma'}$ for some $\alpha\in V$ and $\gamma'\in\Gamma_v$ and let $\gamma=\inf\{\gamma'\in \Gamma_v \mid R\subset V_{\alpha,\gamma'}\}\in\R$. Then $R\subset V_{\alpha,\gamma}$.
\end{Lemma}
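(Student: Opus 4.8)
The plan is to reduce the statement to an elementary observation about the valuations $v_{\alpha,\gamma'}$ evaluated on a single polynomial, using the explicit coefficientwise description of $V_{\alpha,\gamma'}\cap K[X]$ from Remark \ref{descriptionValphagamma}. Since $R\subseteq K[X]$, the containment $R\subset V_{\alpha,\gamma}$ is equivalent to $R\subseteq V_{\alpha,\gamma}\cap K[X]$, so it suffices to show that every $f\in R$ lies in $V_{\alpha,\gamma}\cap K[X]$.

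First I would set $A=\{\gamma'\in\Gamma_v \mid R\subset V_{\alpha,\gamma'}\}$, which is nonempty by hypothesis and satisfies $\gamma=\inf A\in\R$ (that $\gamma>-\infty$ is guaranteed by hypothesis; it also follows automatically, since $R$ has quotient field $K(X)$, hence contains a polynomial of positive degree in its $(X-\alpha)$-expansion, whose leading coefficient bounds $A$ from below via the inequality below). By Remark \ref{Valphagamma polynomial linearly ordered}, for a fixed $\alpha$ the family $\{V_{\alpha,\gamma'}\cap K[X]\}_{\gamma'\in\Gamma_v}$ is linearly ordered by inclusion with $\gamma'$, so $A$ is upward closed in $\Gamma_v$; this gives the picture but is not strictly needed for what follows.

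Next, fix an arbitrary $f\in R$ and write $f(X)=\sum_{i=0}^n a_i(X-\alpha)^i$ with $a_i\in K$. For every $\gamma'\in A$ we have $f\in V_{\alpha,\gamma'}\cap K[X]$, so by Remark \ref{descriptionValphagamma} (the coefficientwise description is valid for any real $\gamma'$) we get $v(a_i)+i\gamma'\geq 0$ for all $i=0,\ldots,n$. For $i\geq 1$ this reads $\gamma'\geq -v(a_i)/i$, and since this holds for every $\gamma'\in A$, taking the infimum over $A$ yields $\gamma=\inf A\geq -v(a_i)/i$, i.e.\ $v(a_i)+i\gamma\geq 0$; for $i=0$ the inequality $v(a_0)\geq 0$ is immediate. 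Invoking Remark \ref{descriptionValphagamma} once more, these inequalities say exactly that $f\in V_{\alpha,\gamma}\cap K[X]$. As $f\in R$ was arbitrary, $R\subseteq V_{\alpha,\gamma}\cap K[X]\subseteq V_{\alpha,\gamma}$, which is the claim.

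There is no serious obstacle here: the only point to be careful about is that passing to the infimum over $\gamma'$ commutes with the defining inequalities for membership in $V_{\alpha,\gamma'}\cap K[X]$, which works precisely because each polynomial $f$ has only finitely many coefficients, so finitely many inequalities $v(a_i)+i\gamma'\geq 0$ are involved and each can be handled separately. The mild technical check is confirming $\gamma\in\R$ (so that $V_{\alpha,\gamma}$ makes sense as in Definition \ref{Valphagamma}, $\R$ being a value group containing $\Gamma_v$), which is part of the hypothesis and, as noted, also follows from $R$ having quotient field $K(X)$.
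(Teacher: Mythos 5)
Your proof is correct and follows essentially the same route as the paper: fix $f\in R$, expand it in powers of $X-\alpha$, observe that membership in $V_{\alpha,\gamma'}$ amounts to the finitely many inequalities $v(a_i)+i\gamma'\geq 0$ (equivalently $\gamma'\geq -v(a_i)/i$ for $i\geq 1$ and $a_0\in V$), and pass to the infimum over $\gamma'$. The extra remarks on why $\gamma>-\infty$ and on $A$ being upward closed are harmless additions but not needed, as you note.
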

In particular, $\gamma$ is a minimum if and only if $\gamma\in\Gamma_v$. Note that, if $V$ is nondiscrete, it may well be that $\gamma\in\R\setminus\Gamma_v$. 
\begin{proof}
Let $f\in R$, with $f(X)=a_0+a_1(X-\alpha)+\ldots+a_d(X-\alpha)^d$. Then 
$$f\in V_{\alpha,\gamma'}\Leftrightarrow \inf\{v(a_i)+i\gamma' \mid i=0,\ldots,d\}\geq0\Leftrightarrow a_0\in V,\gamma'\geq-\frac{v(a_i)}{i},i=1,\ldots,d$$
Since $\gamma$ is the infimum of the $\gamma'$ with the above property in particular we have 
$$a_0\in V,\gamma\geq-\frac{v(a_i)}{i},i=1,\ldots,d$$
that is, $v_{\alpha,\gamma}(f)=\inf\{v(a_i)+i\gamma\mid i=0,\ldots,d\}\geq0\Leftrightarrow f\in V_{\alpha,\gamma}$.
\end{proof}
By Lemma \ref{polynomial ring and int balls}, the next theorem shows that if  $B(\alpha,\gamma)$ is the largest ball centered in $\alpha$ contained in the polynomial closure $\overline{S}$ of $S$, then there exists a pseudo-monotone sequence in $S$ with breadth $\gamma$ and pseudo-limit in $V$, which is equal either to $\alpha$ or to $\alpha+t$, where $t\in V$ has valuation $\gamma$.  This result is the desired converse to Proposition \ref{generalized pseudo sequence and polynomial closure}.

\begin{Thm}\label{existence of generalized pseudo-sequence}
Let $S\subseteq V$ be a subset such that $\Int(S,V)\subset V_{\alpha,\gamma'}$, for some $\alpha\in V$ and $\gamma'\in\Gamma_v$. Let $\gamma=\inf\{\gamma'\in\Gamma_v \mid \Int(S,V)\subset V_{\alpha,\gamma'}\}\in\R$. Then there exists a pseudo-monotone sequence $E=\{s_n\}_{n\in\N}\subseteq S$ with breadth $\gamma$ such that one of the following conditions holds:
\begin{itemize}
\item[1)] $E\subseteq S_{\alpha,<\gamma}$ is pseudo-convergent with pseudo-limit $\alpha$.
\item[2)] $E\subseteq S_{\alpha,>\gamma}$ is pseudo-divergent with pseudo-limit $\alpha$. 
\item[3)] $E\subseteq S_{\alpha,\gamma}$ is pseudo-divergent with a pseudo-limit $s\in S_{\alpha,\gamma}$. 
\item[4)] $E\subseteq S_{\alpha,\gamma}$ is pseudo-stationary with pseudo-limit $\alpha$.
\end{itemize}
Moreover, condition 1) holds if and only if $\sup\{v(s-\alpha) \mid s\in S_{\alpha,<\gamma}\}=\gamma$, condition 2) holds if and only if $\inf\{v(s-\alpha) \mid s\in S_{\alpha,>\gamma}\}=\gamma$ and conditions 3) or 4) hold if and only if $B(\alpha,\gamma)\subseteq\overline{S_{\alpha,\gamma}}$. In these last two cases, $\gamma$ is a minimum$\Leftrightarrow\gamma\in\Gamma_v$.

In particular, if $V$ is discrete, $\gamma$ is a minimum and only case 4) holds.
\end{Thm}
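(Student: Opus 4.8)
The plan is to translate the hypothesis into a statement about the polynomial closure $\overline S$ of $S$ and then run a trichotomy according to how $S$ sits around the ball $B(\alpha,\gamma)$. By Lemma~\ref{limit gamma'}, $\Int(S,V)\subset V_{\alpha,\gamma}$, and $\gamma$ is a minimum exactly when $\gamma\in\Gamma_v$. Since $\Int(S,V)\subset V_{\alpha,\gamma'}$ with $(\alpha,\gamma')\in K\times\Gamma_v$, Theorem~\ref{criterion 1 general case} shows $\Int(S,V)$ is not Pr\"ufer; as $V[X]\subseteq\Int(V)\subseteq\Int(S,V)$ (so $\gamma'\ge 0$ by Lemma~\ref{overrings of V[X]}), Theorem~\ref{IntVPrufer} forces $V$ to be non-discrete or to have infinite residue field, so Lemmas~\ref{polynomial ring and int balls} and \ref{pol closure frontier} and Example~\ref{polclosure ball} are applicable. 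In particular, for $\gamma'\in\Gamma_v$ one has $\Int(S,V)\subset V_{\alpha,\gamma'}\iff B(\alpha,\gamma')\subseteq\overline S$, so $B(\alpha,\gamma)\subseteq\overline S$ and $\gamma=\inf\{\gamma'\in\Gamma_v\mid B(\alpha,\gamma')\subseteq\overline S\}$. Since the polynomial closure is a topological closure (Theorem~\ref{Thm Chabert}), hence commutes with finite unions, $\overline S=\overline{S_{\alpha,<\gamma}}\cup\overline{S_{\alpha,\gamma}}\cup\overline{S_{\alpha,>\gamma}}$. Set $\beta_1=\sup\{v(s-\alpha)\mid s\in S_{\alpha,<\gamma}\}$ and $\beta_2=\inf\{v(s-\alpha)\mid s\in S_{\alpha,>\gamma}\}$ (with $\sup\emptyset=-\infty$, $\inf\emptyset=+\infty$), so $\beta_1\le\gamma\le\beta_2$.

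If $\beta_1=\gamma$, the supremum is not attained, so one extracts $\{s_n\}\subseteq S_{\alpha,<\gamma}$ with $v(s_n-\alpha)$ strictly increasing to $\gamma$; then $v(s_{n+1}-s_n)=v(s_n-\alpha)$, so $\{s_n\}$ is pseudo-convergent with nonzero breadth ideal, breadth $\gamma$ and pseudo-limit $\alpha$, which is case 1). Symmetrically, if $\beta_2=\gamma$ one extracts a strictly decreasing sequence and obtains a pseudo-divergent sequence of breadth $\gamma$ with pseudo-limit $\alpha$, which is case 2). Conversely, the existence of such a sequence forces $\beta_1=\gamma$, respectively $\beta_2=\gamma$, which yields the ``moreover'' equivalences for conditions 1) and 2). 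Since $\beta_1\le\gamma\le\beta_2$ always, it remains to treat $\beta_1<\gamma$ and $\beta_2>\gamma$.

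The crux is to show that $\beta_1<\gamma$ implies $\overline{S_{\alpha,<\gamma}}\cap B(\alpha,\gamma)=\emptyset$. If $V$ is non-discrete I would use a translation property: if $x_0\in\overline{S_{\alpha,<\gamma}}$ with $v(x_0-\alpha)>\beta_1$, then $\{x\mid v(x-\alpha)>\beta_1\}\subseteq\overline{S_{\alpha,<\gamma}}$. This follows from Theorem~\ref{Thm Chabert}: if $x_1\notin\overline{S_{\alpha,<\gamma}}$ it lies outside some finite union $C=\bigcup_i B(s_i,\delta_i)\supseteq S_{\alpha,<\gamma}$ all of whose balls meet $S_{\alpha,<\gamma}$, and an elementary ultrametric computation — distinguishing $\delta_i>\beta_1$, where $B(s_i,\delta_i)$ lies in a single sphere of radius $\le\beta_1$ about $\alpha$ and hence avoids $x_0$, from $\delta_i\le\beta_1$, where $v(x_1-s_i)<\delta_i$ propagates to $v(x_0-s_i)<\delta_i$ — gives $x_0\notin C$, a contradiction. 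Then, if $\overline{S_{\alpha,<\gamma}}$ met $B(\alpha,\gamma)$, the translation property would give $B(\alpha,\lambda)\subseteq\overline S$ for every $\lambda\in\Gamma_v$ with $\lambda>\beta_1$, whence $\gamma\le\inf\{\lambda\in\Gamma_v\mid\lambda>\beta_1\}=\beta_1$, contradicting $\beta_1<\gamma$. If $V$ is a DVR (necessarily with infinite residue field), then $S_{\alpha,<\gamma}=\bigsqcup_{\delta<\gamma}S_{\alpha,\delta}$ is a \emph{finite} disjoint union, so $\overline{S_{\alpha,<\gamma}}=\bigcup_{\delta<\gamma}\overline{S_{\alpha,\delta}}$; for each $\delta<\gamma$, $S_{\alpha,\delta}$ is contained in the sphere $\partial B(\alpha,\delta)$, which is partitioned into the closed balls of radius $\delta+1$ it contains, and either $S_{\alpha,\delta}$ meets only finitely many of these — whence $\overline{S_{\alpha,\delta}}\subseteq\partial B(\alpha,\delta)$ — or it meets infinitely many, which produces a pseudo-stationary sequence in $S$ of breadth $\delta<\gamma$ with pseudo-limit $\alpha$, so that $B(\alpha,\delta)\subseteq\overline S$ by Proposition~\ref{generalized pseudo sequence and polynomial closure}, contradicting the minimality of $\gamma$. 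Hence $\overline{S_{\alpha,<\gamma}}\subseteq\{x\mid v(x-\alpha)<\gamma\}$, and in both cases the claim holds.

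Finally, since $\beta_2>\gamma$ gives $\overline{S_{\alpha,>\gamma}}\subseteq\{x\mid v(x-\alpha)>\gamma\}$ by Example~\ref{polclosure ball}, the decomposition $\overline S=\overline{S_{\alpha,<\gamma}}\cup\overline{S_{\alpha,\gamma}}\cup\overline{S_{\alpha,>\gamma}}$ together with the claim shows $\partial B(\alpha,\gamma)\subseteq\overline{S_{\alpha,\gamma}}$. If $\gamma\in\Gamma_v$, the sphere is nonempty and $\overline{\partial B(\alpha,\gamma)}=B(\alpha,\gamma)$ by Lemma~\ref{pol closure frontier}, so $B(\alpha,\gamma)\subseteq\overline{S_{\alpha,\gamma}}$ and Proposition~\ref{polynomial closure Salpha=gamma} yields a pseudo-monotone sequence in $S_{\alpha,\gamma}$ of breadth $\gamma$ realizing case 3) or case 4). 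If $\gamma\notin\Gamma_v$, then $S_{\alpha,\gamma}=\emptyset$ and $B(\alpha,\gamma)\subseteq\overline{S_{\alpha,>\gamma}}$ would force every point at distance just above $\gamma$ from $\alpha$ to lie at distance $\ge\beta_2$; a point at distance strictly between $\gamma$ and $\beta_2$ (which exists, $V$ being non-discrete) contradicts this, so this case does not occur. This establishes that one of 1)--4) holds; the ``moreover'' equivalence for conditions 3)/4) follows from Proposition~\ref{generalized pseudo sequence and polynomial closure} (with $B(s,\gamma)=B(\alpha,\gamma)$ when $v(s-\alpha)=\gamma$), the assertion that $\gamma$ is a minimum iff $\gamma\in\Gamma_v$ was recorded at the outset, and the last sentence follows from Remark~\ref{V not discrete of V/M infinite} together with the fact that a subset of $\Gamma_v\cong\Z$ bounded below has a least element. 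The step I expect to be the main obstacle is the claim of the third paragraph — preventing the polynomial closure of the ``outer part'' $S_{\alpha,<\gamma}$ from invading $B(\alpha,\gamma)$ — which rests on the combinatorial description of polynomially closed sets and requires a separate, finer argument when $V$ is discrete.
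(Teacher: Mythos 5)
Your proof is correct, and it arrives at all the conclusions of the theorem, but the crucial step --- ruling out the possibility that $\overline{S_{\alpha,<\gamma}}$ meets $B(\alpha,\gamma)$ when $\beta_1<\gamma$ --- is handled by a genuinely different argument than the paper's. The paper uses the minimality of $\gamma$ twice at each ``level'' $\gamma_i$ to show first that $\{v(s-\alpha)\mid s\in S_{\alpha,<\gamma}\}$ is a \emph{finite} set $\{\gamma_1,\ldots,\gamma_r\}$ (else one extracts a pseudo-convergent or pseudo-divergent sequence of breadth $<\gamma$ with pseudo-limit $\alpha$) and then that for each $i$ finitely many balls $B(s_{i,j},\gamma_{i,j})$ with $\gamma_{i,j}>\gamma_i$ cover $S_{\alpha,\gamma_i}$ (else one extracts a pseudo-stationary or pseudo-divergent sequence of breadth $\gamma_i$); the resulting explicit finite covering of $S_{\alpha,<\gamma}$ by closed balls is then shown by direct ultrametric estimates to miss the sphere $\partial B(\alpha,\gamma')$. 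You instead prove a ``translation'' or homogeneity property of the polynomial closure: if $\overline{T}$ contains one point $x_0$ with $v(x_0-\alpha)>\sup_{t\in T}v(t-\alpha)$, it contains the whole open ball of that radius around $\alpha$. Your proof of this via the ball-cover description of polynomially closed sets (Theorem~\ref{Thm Chabert}), distinguishing $\delta_i>\beta_1$ from $\delta_i\le\beta_1$, is correct, and it is a clean reusable statement. Its price is that the ensuing contradiction $\gamma\le\inf\{\lambda\in\Gamma_v\mid\lambda>\beta_1\}=\beta_1$ really needs $\Gamma_v$ dense, so you are forced to supply a separate argument in the DVR case --- which you do, and which is essentially a one-level version of the paper's covering argument. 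In short: your route is valid and perhaps more conceptual in the non-discrete case, while the paper's covering construction handles both cases uniformly; the remaining steps (the $\beta_1=\gamma$ and $\beta_2=\gamma$ cases, the reduction to $\partial B(\alpha,\gamma)\subseteq\overline{S_{\alpha,\gamma}}$, the use of Lemma~\ref{pol closure frontier} and Proposition~\ref{polynomial closure Salpha=gamma}, and the ``moreover'' equivalences) match the paper's proof closely.
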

\begin{Rem}
Note that by Lemma \ref{limit gamma'} we have $\Int(S,V)\subset V_{\alpha,\gamma}$ either in the case $\gamma\in\Gamma_v\Leftrightarrow \gamma$ is a minimum or $\gamma\in\R\setminus\Gamma_v$. Note also that in case 3), where $s\in S_{\alpha,\gamma}$ is a pseudo-limit  of a pseudo-divergent sequence $E=\{s_n\}_{n\in\N}\subseteq S_{\alpha,\gamma}$, we have $V_{\alpha,\gamma}=V_{s,\gamma}$, by Remark \ref{equality Valphagamma}.
\end{Rem}
\begin{proof}
Since by Theorem \ref{criterion 1 general case} $\Int(S,V)$ is not a Pr\"ufer domain, by 
Theorem  \ref{IntVPrufer} either $V$ is not discrete or its residue field is infinite, otherwise $\Int(V)$ would be Pr\"ufer and in particular its overring $\Int(S,V)$ would be Pr\"ufer, too.

We consider the following real numbers: 
\begin{align*}
\gamma_1=&\sup\{v(s-\alpha) \mid s\in S_{\alpha,<\gamma}\}\\
\gamma_2=&\inf\{v(s-\alpha) \mid s\in S_{\alpha,>\gamma}\}
\end{align*}
Clearly, we have $\gamma_1\leq\gamma\leq\gamma_2$. Since $S_{\alpha,>\gamma}\subseteq B(\alpha,\gamma_2)$ and every closed ball is polynomially closed (Theorem \ref{Thm Chabert}), we have:
\begin{equation}\label{Salpha>gamma}
\overline{S_{\alpha,>\gamma}}\subseteq B(\alpha,\gamma_2)
\end{equation}
If $\gamma_1=\gamma$, then there exists a pseudo-convergent sequence $\{s_n\}_{n\in\N}\subseteq S_{\alpha,<\gamma}$ with pseudo-limit $\alpha$ and breadth $\gamma$,  that is:
$$v(s_n-\alpha)<v(s_{n+1}-\alpha)\nearrow\gamma$$
Similarly, if $\gamma_2=\gamma$, then there exists a pseudo-divergent sequence $\{s_n\}_{n\in\N}\subseteq S_{\alpha,>\gamma}$ with $\alpha$ as pseudo-limit and breadth $\gamma$:
$$v(s_n-\alpha)>v(s_{n+1}-\alpha)\searrow\gamma$$
Hence, if either $\gamma_1=\gamma$ or $\gamma_2=\gamma$ we are done.

Suppose from now on that
$$\gamma_1<\gamma<\gamma_2.$$
We are going to show that under these conditions $\gamma$ is a minimum (or, equivalently, $\gamma\in\Gamma_v$), by means of the fact that $S_{\alpha,\gamma}$ is non-empty. We claim first that $\{v(s-\alpha)\mid s\in S_{\alpha,<\gamma})\}$ is finite; in fact, if that were not true, there would exist a sequence $\{s_n\}_{n\in\N}\subseteq S_{\alpha,<\gamma}$ either pseudo-convergent or pseudo-divergent  with breadth $\gamma'<\gamma$, so that $\Int(S,V)\subset V_{\alpha,\gamma'}$ by Proposition \ref{generalized pseudo sequence and polynomial closure}, contrary to the assumption on $\gamma$. Therefore $\gamma_1$ is a maximum and we may assume that:
$$\{v(s-\alpha)\mid s\in S_{\alpha,<\gamma}\}=\{\gamma_1,\ldots,\gamma_r\},\;\;\gamma_r<\ldots<\gamma_1<\gamma.$$
For each $i=1,\ldots,r$ we set:
$$S_{\alpha,\gamma_i}=\{s\in S_{\alpha,<\gamma} \mid v(s-\alpha)=\gamma_i\}$$
so that
$$S_{\alpha,<\gamma}=\bigcup_{i=1,\ldots,r}S_{\alpha,\gamma_i}.$$
For each $i=1,\ldots,r$, there is no pseudo-stationary sequence $\{s_n\}_{n\in\N}\subset S_{\alpha,\gamma_i}$ with breadth $\gamma_i$ and pseudo-limit $\alpha$, otherwise by Proposition \ref{generalized pseudo sequence and polynomial closure} $\Int(S,V)\subset V_{\alpha,\gamma_i}$, in contradiction with the definition of $\gamma$. Hence, for each $i\in\{1,\ldots,r\}$, there exist finitely many $s_{i,j}\in S_{\alpha,\gamma_i}$, $j\in I_i$,  such that the following holds:
$$\forall s\in S_{\alpha,\gamma_i}, \exists j\in I_i \textnormal{ such that }v(s-s_{i,j})>\gamma_i.$$ 
For each $j\in I_i$, we set $S_{\alpha,\gamma_i,j}=\{s\in S_{\alpha,\gamma_i} \mid v(s-s_{i,j})>\gamma_i\}$ and $\gamma_{i,j}=\inf\{v(s-s_{i,j}) \mid s\in S_{\alpha,\gamma_i,j}\}$. If $\gamma_{i,j}=\gamma_i$, then there exists a pseudo-divergent sequence with  pseudo-limit $s_{i,j}$ and breadth $\gamma_i$ so that by Proposition \ref{generalized pseudo sequence and polynomial closure} we would have $B(s_{i,j},\gamma_i)=B(\alpha,\gamma_i)\subseteq\overline{S}$ (recall that $v(\alpha-s_{i,j})=\gamma_i$), which is equivalent to $\Int(S,V)\subset V_{\alpha,\gamma_i}$ by Lemma \ref{polynomial ring and int balls}, contrary to the assumption on $\gamma$. Thus, $\gamma_{i,j}>\gamma_i$ for all $j\in I_i$ (and for all $i\in 1,\ldots,r$). Finally, we have showed that
$$S_{\alpha,\gamma_i}\subseteq\bigcup_{j\in I_i}B(s_{i,j},\gamma_{i,j})$$
so that
\begin{equation}\label{Salpha<gamma}
S_{\alpha,<\gamma}=\bigcup_{i=1,\ldots,r}S_{\alpha,\gamma_i}\subseteq\bigcup_{i=1,\ldots,r}\bigcup_{j\in I_i}B(s_{i,j},\gamma_{i,j})
\end{equation}
Let $\gamma'\in\Gamma_v$ be such that $\gamma\leq\gamma'<\gamma_2$ and $\Int(S,V)\subset V_{\alpha,\gamma'}$; note that if $\gamma$ is not a min, then in particular $\Gamma_v$ is non-discrete and we may choose $\gamma'\in\Gamma_v$ such that $\gamma<\gamma'<\gamma_2$; if $\gamma$ is a min, then we choose $\gamma'=\gamma$. 
Since the polynomial closure is a topological closure by Theorem \ref{Thm Chabert}, by Lemma \ref{polynomial ring and int balls} we have
$$B(\alpha,\gamma')\subseteq \overline{S}=\overline{S_{\alpha,<\gamma}}\cup \overline{S_{\alpha,\gamma}}\cup\overline{S_{\alpha,>\gamma}}$$
We claim that $\partial B(\alpha,\gamma')\subseteq \overline{S_{\alpha,\gamma}}$. In fact, let $\beta\in V$ be such that $v(\beta-\alpha)=\gamma'$. If $\beta\in \overline{S_{\alpha,<\gamma}}$ then by (\ref{Salpha<gamma}) we have $v(\beta-s_{i,j})\geq\gamma_{i,j}$ for some $i\in\{1,\ldots,r\}$ and $j\in I_i$, which implies that $v(\alpha-s_{i,j})=v(\alpha-\beta+\beta-s_{i,j})>\gamma_i$, a contradiction. Similarly, $\beta$ is not in $\overline{S_{\alpha,>\gamma}}$, by(\ref{Salpha>gamma}) and the fact that $\gamma'<\gamma_2$. Therefore $\partial B(\alpha,\gamma')\subseteq \overline{S_{\alpha,\gamma}}$, as claimed. In particular, $S_{\alpha,\gamma}\not=\emptyset$, $\gamma\in\Gamma_v$ and so $\gamma=\min\{\gamma'\in\Gamma_v \mid \Int(S,V)\subset V_{\alpha,\gamma'}\}$. We may therefore assume that $\gamma'=\gamma$. By Lemma \ref{pol closure frontier} (recall that either $V$ is not discrete or its residue field is infinite) we have $ B(\alpha,\gamma)\subseteq \overline{S_{\alpha,\gamma}}$, so that,  by Proposition \ref{polynomial closure Salpha=gamma}, there exists a pseudo-monotone sequence $E=\{s_n\}_{n\in\N}\subseteq S_{\alpha,\gamma}$ with breadth $\gamma$  such that either $E$ is pseudo-stationary and has pseudo-limit $\alpha$ or $E$ is  pseudo-divergent and has  pseudo-limit in $S_{\alpha,\gamma}$. The proof is now complete.
\end{proof}

As we have already said, if $\Int(S,V)$ is not Pr\"ufer  then there might be residually transcendental valuation overrings which are not of the form $V_{\alpha,\gamma}$ with $(\alpha,\gamma)\in K\times\Gamma_v$. For example, if $E=\{s_n\}_{n\in\N}\subset V$ is a pseudo-convergent sequence of algebraic type without pseudo-limits in $K$, then $\Int(E,V)$ is not Pr\"ufer by Theorem \ref{ThmLW}; by Theorem \ref{existence of generalized pseudo-sequence} is not difficult to show that $\Int(S,V)\not\subset V_{\alpha,\gamma}$ for every $(\alpha,\gamma)\in K\times\Gamma_v$. However, we may reduce our discussion to this case by means of  Proposition \ref{IntSVIntSW}, as the following proposition shows.

\begin{Prop}\label{reduction to K}
Let $S\subseteq V$ be such that $\Int(S,V)\subset V_{\alpha,\gamma'}=W_{\alpha,\gamma'}\cap K(X)$ for some $(\alpha,\gamma')\in F\times\Gamma_{w}$, where $F$ is a finite extension of $K$ and $W$ is a valuation domain of $F$ lying over $V$. Let $\gamma=\inf\{\gamma'\in\Gamma_w \mid \Int(S,V)\subset V_{\alpha,\gamma'}=W_{\alpha,\gamma'}\cap K(X)\}$. Then there exists a pseudo-monotone sequence $E=\{s_n\}_{n\in\N}\subseteq S$ with breadth $\gamma$ and pseudo-limit which is equal either to  $\alpha$ or belongs to $\{x\in W \mid w(x-\alpha)=\gamma\}$.

If $V$ is a DVR, then $E$ is pseudo-stationary, the breadth $\gamma$ is in $\Gamma_v$ and there exists $\beta\in V$ which is a pseudo-limit of $E$, so that, in particular, $V_{\alpha,\gamma}=V_{\beta,\gamma}$.
\end{Prop}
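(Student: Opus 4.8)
The plan is to deduce the statement from Theorem \ref{existence of generalized pseudo-sequence}, which is exactly this assertion for a base valuation domain whose quotient field is its own ground field, by transporting the whole situation along the finite extension $F/K$ via Proposition \ref{IntSVIntSW}.

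First I would collect the structural facts. Since $F/K$ is finite and $W$ lies over $V$, the value group $\Gamma_w$ is squeezed between $\Gamma_v$ and its divisible hull with $\Gamma_w/\Gamma_v$ finite, so $\Gamma_w$ is again an ordered subgroup of $\R$ and $W$ is a rank one valuation domain of $F$; moreover $S\subseteq V\subseteq W$ and $w$ restricts to $v$ on $K$. Applying Proposition \ref{IntSVIntSW} to the hypothesis $\Int(S,V)\subset W_{\alpha,\gamma'}\cap K(X)$ yields $\Int(S,W)\subset W_{\alpha,\gamma'}$, and, as noted there, $\alpha\in W$ and $\gamma'\geq 0$. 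Next I would reconcile the two infima: from $V=W\cap K$ one gets $\Int(S,W)\cap K(X)=\Int(S,V)$, so for $\gamma'\in\Gamma_w$ the conditions $\Int(S,V)\subset W_{\alpha,\gamma'}\cap K(X)$ and $\Int(S,W)\subset W_{\alpha,\gamma'}$ are equivalent (one direction is Proposition \ref{IntSVIntSW}, the other is immediate by contracting to $K(X)$); hence $\gamma=\inf\{\gamma'\in\Gamma_w\mid\Int(S,W)\subset W_{\alpha,\gamma'}\}$.

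At this point the hypotheses of Theorem \ref{existence of generalized pseudo-sequence} hold with $(V,K,v)$ replaced by $(W,F,w)$, so it produces a pseudo-monotone sequence $E=\{s_n\}_{n\in\N}\subseteq S$ of breadth $\gamma$ whose pseudo-limit (with respect to $w$) is either $\alpha$ in cases 1), 2), 4), or an element $s$ of $S_{\alpha,\gamma}=\{s\in S\mid w(s-\alpha)=\gamma\}$ in case 3); in the latter case $s$ lies in $\{x\in W\mid w(x-\alpha)=\gamma\}$ because $S\subseteq W$. I would stress that, since the terms of $S$ lie in $K$ and $w|_K=v$, the property ``pseudo-monotone of breadth $\gamma$'' for $E$ is the same over $v$ as over $w$, so only the pseudo-limit genuinely involves $w$. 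This settles the first assertion.

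For the DVR case, $\Gamma_w$ is a finitely generated subgroup of $\Gamma_v\otimes_\Z\Q\cong\Q$, hence cyclic, so $W$ is a DVR; the final clause of Theorem \ref{existence of generalized pseudo-sequence} then leaves only case 4), so $E\subseteq S$ is pseudo-stationary with pseudo-limit $\alpha$ and $\gamma\in\Gamma_w$ is a minimum. Because $E\subseteq V$ and $\gamma=v(s_n-s_m)$ for $n\ne m$, in fact $\gamma\in\Gamma_v$. Setting $\beta=s_1$ and discarding it from $E$ leaves a pseudo-stationary sequence in $S$ of the same breadth having $\beta\in V$ as a pseudo-limit (see \S \ref{Remark pseudo-stationary}); finally $w(\alpha-\beta)=\gamma$, so Remark \ref{equality Valphagamma} applied over $W$ gives $W_{\alpha,\gamma}=W_{\beta,\gamma}$, whence $V_{\alpha,\gamma}=W_{\alpha,\gamma}\cap K(X)=W_{\beta,\gamma}\cap K(X)=V_{\beta,\gamma}$, the last equality because $(\beta,\gamma)\in K\times\Gamma_v$. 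The only step that needs genuine care is the bookkeeping in the second paragraph --- making sure that the real number $\gamma$, defined through $\Int(S,V)$ and its contractions to $K(X)$, is precisely the infimum over $\Gamma_w$ that Theorem \ref{existence of generalized pseudo-sequence} wants as input; everything else is a mechanical transfer of an already proved result across $F/K$, together with the harmless subtlety of reading ``pseudo-limit'' over $w$ while ``breadth'' may be read over $v$.
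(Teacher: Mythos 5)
Your proof follows essentially the same route as the paper's: apply Proposition \ref{IntSVIntSW} to transfer the hypothesis to $\Int(S,W)\subset W_{\alpha,\gamma'}$, then invoke Theorem \ref{existence of generalized pseudo-sequence} with $(W,F,w)$ in place of $(V,K,v)$, and finally use \S \ref{Remark pseudo-stationary} together with Remark \ref{equality Valphagamma} for the DVR clause. The only places where you are more explicit than the paper are genuinely helpful: you reconcile the two infima via the observation that $\Int(S,W)\cap K(X)=\Int(S,V)$ (so the set of $\gamma'\in\Gamma_w$ with $\Int(S,V)\subset W_{\alpha,\gamma'}\cap K(X)$ equals the set with $\Int(S,W)\subset W_{\alpha,\gamma'}$, making the infimum the same $\gamma$ Theorem \ref{existence of generalized pseudo-sequence} expects), and you justify that $W$ is itself a DVR (finite index of $\Gamma_v$ in $\Gamma_w$, finitely generated subgroup of $\Q$ is cyclic) before invoking the last clause of Theorem \ref{existence of generalized pseudo-sequence}, which the paper leaves implicit.
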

\begin{proof}
Suppose that $\Int(S,V)\subset V_{\alpha,\gamma'}=W_{\alpha,\gamma'}\cap K(X)$ as in the assumptions of the proposition. By Proposition \ref{IntSVIntSW}, $\Int(S,W)\subset W_{\alpha,\gamma'}$. Note that, by Lemma \ref{overrings of V[X]}, $\alpha\in W$ and $\gamma'\geq 0$. By Theorem \ref{existence of generalized pseudo-sequence}, there exists a pseudo-monotone sequence $\{s_n\}_{n\in\N}\subseteq S$ whose breadth is $\gamma$ and has pseudo-limit which is either $\alpha$ or belongs to $\{x\in W \mid w(x-\alpha)=\gamma\}$.

In the case $V$ is discrete, by Theorem \ref{existence of generalized pseudo-sequence} $E$ is necessarily pseudo-stationary with breadth $\gamma$ and $\alpha$ is a pseudo-limit of $E$. Since the $s_n$'s are elements of $K$, $w(s_n-s_m)=v(s_n-s_m)$, so that $\gamma\in\Gamma_v$. Moreover, by \S \ref{Remark pseudo-stationary}, any  element of $E=\{s_n\}_{n\in\N}$ can be considered as a pseudo-limit of $E$, so the last equality follows from Remark \ref{equality Valphagamma}.
\end{proof}
Finally, the next theorem characterizes the subsets $S$ of $V$ for which $\Int(S,V)$ is Pr\"ufer. We give first the following generalization of the definition of pseudo-limit. 

\begin{Def}
Let $E=\{s_n\}_{n\in\N}$ be a pseudo-monotone sequence of $K$ and $\alpha\in\overline{K}$. We say that $\alpha$ is a pseudo-limit of $E$ if there exists a valuation $w$ of $K(\alpha)$ which lies above $v$ such that $\alpha$ is a pseudo-limit of $E$ with respect to $w$ (clearly, $E$ is a pseudo-monotone sequence with respect to $w$). 
\end{Def}

We recall that for our convention (see \S\ref{pcv}) a pseudo-convergent sequence $E$ has non-zero breadth ideal. With this terminology, the next theorem shows that $\Int(S,V)$ is Pr\"ufer if and only if $S$ does not admit any pseudo-limit in $\overline{K}$. This theorem generalizes the main result of Loper and Werner (Theorem \ref{ThmLW}).
\begin{Thm}\label{final theorem}
Let $S\subseteq V$. Then $\Int(S,V)$ is a Pr\"ufer domain if and only if $S$ does not contain a pseudo-monotone sequence $E=\{s_n\}_{n\in\N}$ which has a  pseudo-limit $\alpha\in\overline{K}$. 
\end{Thm}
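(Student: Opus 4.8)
The plan is to obtain Theorem~\ref{final theorem} by combining the results of the preceding sections, proving each of the two implications by contraposition. The essential content is already contained in Proposition~\ref{generalized pseudo sequence and polynomial closure}, Theorem~\ref{existence of generalized pseudo-sequence}, Proposition~\ref{reduction to K} and Theorem~\ref{criterion 1 general case}; what is left is to route the argument through a suitable finite extension of $K$ and to keep track of which value group the breadth of the relevant sequence lies in.

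Suppose first that $\Int(S,V)$ is not Pr\"ufer; I want to produce a pseudo-monotone sequence in $S$ with a pseudo-limit in $\overline{K}$. (We may assume $S\neq\emptyset$, since $\Int(\emptyset,V)=K[X]$ is Pr\"ufer.) The integrally closed domain $R=\Int(S,V)$ is contained in $K[X]$, has quotient field $K(X)$ because $V[X]\subseteq R$, and satisfies $R\cap K=V$, which is a valuation, hence Pr\"ufer, domain with quotient field $K$. Thus Theorem~\ref{criterion 1 general case} yields an extension $\overline{W}$ of $V$ to $\overline{K}$ and a pair $(\alpha,\gamma)\in\overline{K}\times\Gamma_{\overline{v}}$ with $R\subset V_{\alpha,\gamma}^{\overline{W}}$, and by the last paragraph of Remark~\ref{example} I may take $\overline{W}$ to come from a rank one valuation $W$ on a finite extension $F$ of $K$ lying over $V$, with $\alpha\in F$ and $\gamma\in\Gamma_w$, so that $\Int(S,V)\subset W_{\alpha,\gamma}\cap K(X)$. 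Now Proposition~\ref{reduction to K} applies with these data and produces a pseudo-monotone sequence $E=\{s_n\}_{n\in\N}\subseteq S$, of breadth $\gamma_0=\inf\{\gamma'\in\Gamma_w\mid \Int(S,V)\subset W_{\alpha,\gamma'}\cap K(X)\}$, whose pseudo-limit with respect to $w$ is either $\alpha$ itself or an element $\beta\in W$ with $w(\beta-\alpha)=\gamma_0$. In either case the pseudo-limit belongs to $F\subseteq\overline{K}$, and restricting $w$ to the subfield of $F$ that it generates over $K$ gives a valuation over $v$ witnessing, in the sense of the definition preceding the theorem, that $E$ has a pseudo-limit in $\overline{K}$.

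Conversely, suppose $E=\{s_n\}_{n\in\N}\subseteq S$ is pseudo-monotone and has a pseudo-limit $\alpha\in\overline{K}$, so there is a valuation $w$ on $K(\alpha)$ over $v$ for which $\alpha$ is a pseudo-limit of $E$; set $F=K(\alpha)$ and let $W$ be the (rank one) valuation domain of $w$. The first thing to check is that this falls under Proposition~\ref{generalized pseudo sequence and polynomial closure}: since $E\subseteq V\subseteq W$ we have $w(s_{n+1}-s_n)=v(s_{n+1}-s_n)\geq0$, so the breadth $\gamma=\lim_n w(s_{n+1}-s_n)$ is a non-negative real number, and going through the three cases of the definition of pseudo-limit shows $w(\alpha-s_n)\geq0$ for all $n$, hence $\alpha\in W$. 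Applying Proposition~\ref{generalized pseudo sequence and polynomial closure} with $(W,F)$ in place of $(V,K)$ and $E$ in place of $S$ gives $\Int(E,W)\subset W_{\alpha,\gamma}$; since every $f\in\Int(S,V)$ lies in $F[X]$ and carries $E\subseteq S$ into $V\subseteq W$, we get $\Int(S,V)\subseteq\Int(E,W)\subset W_{\alpha,\gamma}\cap K(X)=V_{\alpha,\gamma}^{\overline{W}}$, for any extension $\overline{W}$ of $w$ to $\overline{K}$ (Remark~\ref{example}). If $\gamma\in\Gamma_w$, then $\gamma\in\Gamma_{\overline{v}}$ as $[F:K]<\infty$, and this already exhibits $R=\Int(S,V)$ inside $V_{\alpha,\gamma}^{\overline{W}}$ with $(\alpha,\gamma)\in\overline{K}\times\Gamma_{\overline{v}}$; if instead $\gamma\notin\Gamma_w$, then, arguing as in Remark~\ref{breadth not in Gamma_v}, I pick $\gamma'\in\Gamma_v$ with $\gamma'>\gamma$ and use Remark~\ref{Valphagamma polynomial linearly ordered} to get $\Int(S,V)\subset W_{\alpha,\gamma}\cap K[X]\subsetneq W_{\alpha,\gamma'}\cap K[X]\subset W_{\alpha,\gamma'}\cap K(X)=V_{\alpha,\gamma'}^{\overline{W}}$, with $(\alpha,\gamma')\in\overline{K}\times\Gamma_{\overline{v}}$. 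In either case Theorem~\ref{criterion 1 general case} shows $\Int(S,V)$ is not Pr\"ufer.

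I do not expect a genuine obstacle here: the content sits in the earlier results, and the only delicate points are bookkeeping. In the converse implication one must make sure that the breadth coming from an abstract pseudo-limit in $\overline{K}$ is non-negative and that the pseudo-limit itself lies in the valuation ring $W$, so that Proposition~\ref{generalized pseudo sequence and polynomial closure} is legitimately applicable, and one must absorb the case in which this breadth is not torsion over $\Gamma_v$ by enlarging it slightly to an element of $\Gamma_v$, as in Remark~\ref{breadth not in Gamma_v}, before Theorem~\ref{criterion 1 general case} can be invoked. Once those checks are made, the theorem is a formal consequence.
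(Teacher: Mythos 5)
Your argument is correct and follows essentially the same route as the paper's proof: the forward implication (not Pr\"ufer implies the existence of a suitable pseudo-monotone sequence) goes through Theorem~\ref{criterion 1 general case}, Remark~\ref{example} and Proposition~\ref{reduction to K} exactly as in the paper, and the converse uses Proposition~\ref{generalized pseudo sequence and polynomial closure} in a finite extension $F\supseteq K(\alpha)$ and then contracts to $K(X)$, handling the case $\gamma\notin\Gamma_w$ by the trick of Remark~\ref{breadth not in Gamma_v}. The only (cosmetic) deviations are that you treat the pseudo-stationary case uniformly with the other two instead of separating it as the paper does, and you are a bit more explicit about verifying the hypotheses of Theorem~\ref{criterion 1 general case}; neither affects the substance.
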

\begin{proof}
Suppose there exists a pseudo-monotone sequence $E=\{s_n\}_{n\in\N}\subseteq S$ with breadth $\gamma\in \R$ and pseudo-limit $\alpha\in\overline{K}$. If $E$ is pseudo-stationary, then we know that $\gamma\in\Gamma_v$ and any element $s$ of $E$ is a pseudo-limit of $E$ (\S \ref{Remark pseudo-stationary}). Then by Proposition \ref{generalized pseudo sequence and polynomial closure}, $\Int(S,V)\subset V_{s,\gamma}=V_{\alpha,\gamma}$, so by Theorem \ref{criterion 1 general case}  $\Int(S,V)$ is not Pr\"ufer. Suppose now that $E$ is either pseudo-convergent or pseudo-divergent, and let $F$ be a finite extension of $K$ which contains $\alpha$. Let $W$ be a valuation domain of $F$ lying over $V$ (which is necessarily of rank one) for which $\alpha$ is a pseudo-limit of $E$ (which clearly is a pseudo-monotone sequence with respect to the associated valuation $w$). Clearly,  $\alpha\in W$. By Proposition \ref{generalized pseudo sequence and polynomial closure}, it follows that $\Int(S,W)\subset W_{\alpha,\gamma}$ and contracting down to $K[X]$ we get $\Int(S,V)\subset V_{\alpha,\gamma}=W_{\alpha,\gamma}\cap K(X)$, so, by Theorem \ref{criterion 1 general case} and Remark \ref{breadth not in Gamma_v}, $\Int(S,V)$ is not Pr\"ufer.

Conversely, suppose that $\Int(S,V)$ is not Pr\"ufer. By Theorem \ref{criterion 1 general case},  there exists $(\alpha,\gamma')\in \overline{K}\times\Gamma_{\overline{v}}$ and an extension $\overline{W}$ of $V$ to $\overline K$ such that $\Int(S,V)\subset V_{\alpha,\gamma'}^{\overline{W}}$. As in Remark \ref{example}, let $F$ be a finite extension of $K$ and $W=\overline{W}\cap F$  such that $(\alpha,\gamma')\in F\times\Gamma_w$. Let $\gamma=\inf\{\gamma'\in\Gamma_w \mid \Int(S,V)\subset V_{\alpha,\gamma'}=W_{\alpha,\gamma'}\cap K(X)\}\in \R$. By Proposition \ref{reduction to K}, there exists a pseudo-monotone sequence $E=\{s_n\}_{n\in\N}\subseteq S$ with breadth $\gamma$ and pseudo-limit which is equal either to $\alpha$ or to $\alpha+t$, where $t\in W$ is such that $w(t)=\gamma$.
\end{proof}

We summarize here some known results and new characterizations of when $\Int(S,V)$ is a Pr\"ufer domain, when $V$ is a DVR. Recall that, as already remarked by Loper and Werner, if $V$ is a non-discrete rank one valuation domain,  there are subsets $S$ of $V$ which are not precompact but $\Int(S,V)$ is Pr\"ufer (see the Introduction).
\begin{Cor}
Let $V$ be a DVR and $S\subseteq V$. Then the following conditions are equivalent:
\begin{itemize}
\item[i)] $\Int(S,V)$ is Pr\"ufer.
\item[ii)] there is no pseudo-stationary sequence contained in $S$.
\item[iii)] $S$ is precompact.
\item[iv)] there is no $(\alpha,\gamma)\in V\times\Gamma_v$ such that $\Int(S,V)\subset V_{\alpha,\gamma}$.
\end{itemize}
\end{Cor}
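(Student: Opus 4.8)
The plan is to prove the Corollary by establishing the cycle of implications $i)\Leftrightarrow ii)$, $ii)\Leftrightarrow iii)$, and $i)\Leftrightarrow iv)$, each of which is essentially a specialization of the machinery already developed in the paper to the discrete case. Throughout, the key simplification is Remark \ref{V not discrete of V/M infinite}: the only pseudo-monotone sequences in a DVR are pseudo-stationary sequences, since a pseudo-divergent sequence or a pseudo-convergent sequence with non-zero breadth ideal forces $V$ to be non-discrete. Hence Theorem \ref{final theorem} already gives $i)\Leftrightarrow ii)$ over $K$ once one checks that a pseudo-stationary sequence in $S\subseteq V$ cannot acquire a pseudo-limit in a proper algebraic extension $\overline{K}$ that it did not already have in $K$: if $E\subseteq V$ is pseudo-stationary with breadth $\gamma=v(s_n-s_m)$, then by \S\ref{Remark pseudo-stationary} every element of $E$ is (up to removing one term) a pseudo-limit of $E$ already in $K$, so $S$ admits a pseudo-limit in $\overline{K}$ if and only if $S$ contains a pseudo-stationary sequence at all. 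This disposes of $i)\Leftrightarrow ii)$.

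For $i)\Leftrightarrow iv)$, one direction is immediate from Theorem \ref{criterion 1 general case}: if $\Int(S,V)\subset V_{\alpha,\gamma}$ for some $(\alpha,\gamma)\in V\times\Gamma_v$, then $\Int(S,V)$ is not Pr\"ufer. For the converse, suppose $\Int(S,V)$ is not Pr\"ufer; by Theorem \ref{criterion 1 general case} there is $(\alpha,\gamma')\in\overline{K}\times\Gamma_{\overline v}$ and an extension $\overline W$ with $\Int(S,V)\subset V_{\alpha,\gamma'}^{\overline W}$, and then by Proposition \ref{reduction to K} (the DVR case) there is a pseudo-stationary sequence $E=\{s_n\}\subseteq S$ with breadth $\gamma\in\Gamma_v$ and a pseudo-limit $\beta\in V$, so $V_{\alpha,\gamma}=V_{\beta,\gamma}$ and Proposition \ref{generalized pseudo sequence and polynomial closure} gives $\Int(S,V)\subset V_{\beta,\gamma}$ with $(\beta,\gamma)\in V\times\Gamma_v$. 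Combining with $i)\Leftrightarrow ii)$, this also re-proves $ii)\Rightarrow iv)$ directly: a pseudo-stationary sequence $E\subseteq S$ has some term $s\in S$ as a pseudo-limit and breadth $\gamma\in\Gamma_v$, so Proposition \ref{generalized pseudo sequence and polynomial closure} yields $\Int(S,V)\subset V_{s,\gamma}$.

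The genuinely new content is $ii)\Leftrightarrow iii)$, i.e.\ that for $S\subseteq V$ with $V$ a DVR, $S$ is precompact if and only if $S$ contains no pseudo-stationary sequence. Here I would argue as follows. Since $V$ is a DVR, its value group is $\Z\gamma_0$ for a generator $\gamma_0$, and the completion $\widehat V$ is also a DVR with the same (finite, in the relevant direction, but here arbitrary) residue field; precompactness of $S$ means the closure of $S$ in $\widehat V$ is compact. If $V/M$ is finite then every bounded subset of $\widehat V$ is precompact, so $iii)$ always holds; on the other hand if $V/M$ is finite there is no pseudo-stationary sequence in $V$ at all (a pseudo-stationary sequence forces $V/M$ infinite, by \S\ref{Remark pseudo-stationary}), so $ii)$ holds vacuously and the equivalence is trivial. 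So assume $V/M$ is infinite. If $S$ is not precompact, then its closure in $\widehat V$ fails to be totally bounded, so for some level $n$ there are infinitely many points of $S$ in pairwise distinct cosets of $M^n$ but (passing to a coset of $M^{n-1}$ by pigeonhole, using that there are only finitely many relevant constraints at coarser levels after one reduces) one extracts a sequence $\{s_k\}\subseteq S$ with $v(s_k-s_\ell)$ constant equal to $(n-1)\gamma_0$ for $k\neq\ell$: that is precisely a pseudo-stationary sequence. Conversely a pseudo-stationary sequence $\{s_n\}\subseteq S$ has $v(s_n-s_m)=\gamma$ fixed for all $n\neq m$, hence no subsequence is Cauchy, hence its closure in $\widehat V$ is infinite and discrete, so not compact, so $S$ is not precompact.

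The step I expect to be the main obstacle is the pigeonhole extraction in $ii)\Leftrightarrow iii)$: turning the failure of total boundedness of $S$ into an \emph{honest} pseudo-stationary sequence requires a little care, because failure of total boundedness at level $n$ gives infinitely many points in distinct cosets of $M^n$ but does not immediately give a common value of $v(s_k-s_\ell)$ on \emph{all} pairs. The fix is to choose $n$ minimal with this property and note that the $n-1$ reduction identifies all those points, so they land in finitely many cosets of $M^{n-1}$, one of which must contain infinitely many of them; that subcollection then has $v(s_k-s_\ell)$ exactly $(n-1)\gamma_0$ on every pair. (Alternatively one can invoke the homeomorphism, mentioned in the Introduction, between $\{W_s\mid s\in S\}$ and $S$ with the $V$-adic topology, together with the König-lemma / tree argument standard for ultrametric totally bounded sets, but the direct pigeonhole is cleaner here.) Everything else reduces to quoting Remark \ref{V not discrete of V/M infinite}, Proposition \ref{generalized pseudo sequence and polynomial closure}, Proposition \ref{reduction to K}, Theorem \ref{criterion 1 general case}, and Theorem \ref{final theorem}.
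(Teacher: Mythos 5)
Your proposal is correct and follows essentially the same route as the paper: the same decomposition into $i)\Leftrightarrow ii)$, $i)\Leftrightarrow iv)$ (via Theorem~\ref{criterion 1 general case}, Remark~\ref{example}, Lemma~\ref{overrings of V[X]}, Proposition~\ref{reduction to K}, Theorem~\ref{existence of generalized pseudo-sequence}) and $ii)\Leftrightarrow iii)$ (via \cite[Proposition 1.2]{CCL} and a pigeonhole extraction). The only stylistic difference is that for $i)\Leftrightarrow ii)$ you invoke Theorem~\ref{final theorem} directly together with the observation that in a DVR the only pseudo-monotone sequences are pseudo-stationary and automatically carry a pseudo-limit in $K$, whereas the paper re-derives this implication by going back through Proposition~\ref{generalized pseudo sequence and polynomial closure}, Lemma~\ref{ValphagammacapK[X] not Prufer}, Theorem~\ref{criterion 1 general case} and Proposition~\ref{reduction to K}; your pigeonhole argument for $ii)\Leftrightarrow iii)$ (choosing $n$ minimal with $S/M^n$ infinite) is also a slightly cleaner version of the paper's subsequence extraction.
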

\begin{proof}
It is easy to see that ii) and iii) are equivalent if $V$ is discrete. In fact, $S$ is precompact if and only if $S$ modulo $M^n$ is finite for each $n\geq 1$ (\cite[Proposition 1.2]{CCL}). Now, if the latter condition holds, then there cannot be any pseudo-stationary sequence in $V$ by \S \ref{Remark pseudo-stationary}. Conversely, if $S$ modulo $M^n$ is infinite for some $n\geq 1$, then there exists $\{s_m\}_{m\in\N}\subseteq S$ such that $v(s_m-s_k)<n$ for each $m\not=k$. Since the values $\{v(s_m-s_k) \mid m\not=k\}$ are finite, we can extract a subsequence from $\{s_m\}_{m\in\N}$ which is pseudo-stationary, as claimed.

The fact that i) and iv) are equivalent follows from Proposition \ref {reduction to K} and Theorem \ref{existence of generalized pseudo-sequence}. 

We show then that i) and ii) are equivalent. By Proposition \ref{generalized pseudo sequence and polynomial closure}, if there is a pseudo-stationary sequence with breadth $\gamma$ contained in $S$, then $\Int(S,V)\subset V_{\alpha,\gamma}$, so by Lemma \ref{ValphagammacapK[X] not Prufer} $\Int(S,V)$ is not Pr\"ufer. Suppose now that $\Int(S,V)$ is not Pr\"ufer: by Theorem \ref{criterion 1 general case}, Remark \ref{example} and Lemma \ref{overrings of V[X]} there exist a finite extension $F$ of $K$, a valuation domain $W$ of $F$ extending $V$ and $(\alpha,\gamma)\in W\times\Gamma_{w}$ such that $\Int(S,V)\subset V_{\alpha,\gamma}=W_{\alpha,\gamma}\cap K(X)$. Suppose also that $\gamma\in\Gamma_w$ is minimal with this property. Then by Proposition \ref{reduction to K} it would follow that $\gamma\in \Gamma_v$ and there exists a pseudo-stationary sequence in $S$ with breadth $\gamma$, a contradiction.
\end{proof}

Actually, iii) implies ii) also when $V$ is not discrete. More generally, when $S$ is a precompact subset of $V$, then there is no pseudo-monotone sequence contained in $S$, so that by Theorem \ref{final theorem} we get again the result of Theorem \ref{ThmCCL}.

\subsection*{\textbf{Acknowledgements}}

This research has been supported by the grant "Ing. G. Schirillo" of the Istituto Nazionale di Alta Matematica and also by the University of Padova.

The author wishes to thank the anonymous referee for improving the quality of the paper.

The author wishes to thank also Jean-Luc Chabert for pointing out some inaccuracies.

\end{document}